\documentclass[12pt]{amsart}
\usepackage{srcltx}
\usepackage{amsmath}
\usepackage{amsfonts}
\usepackage{amssymb}
\usepackage{amsthm}
\usepackage[T1]{fontenc}
\usepackage{hhline}                 
\usepackage{multirow}                 
\usepackage{array}                    
\usepackage{float}  
\usepackage{mathtools}
\usepackage{latexsym}
\usepackage{enumitem}
\usepackage[hidelinks]{hyperref}
\usepackage{amsrefs}
\usepackage{array,multirow,makecell}
\newcolumntype{C}[1]{>{\centering\arraybackslash}p{#1}}



\def\Z{\mathbb{Z}}

\def\F{\mathbb{F}}

\newcommand{\supp }{\mathrm{supp\,}}

\newcommand{\Jac }{\mathrm{ {\mathcal J}ac}}

\newtheorem{theo}{Theorem}[section]
\newtheorem{definition}[theo]{Definition}
\newtheorem{proposition}[theo]{Proposition}
\newtheorem{corollary}[theo]{Corollary}
\newtheorem{lemma}[theo]{Lemma}
\newtheorem{remark}[theo]{Remark}
\newtheorem{theorem}[theo]{Theorem}
\newtheorem{example}[theo]{Example}

\def\div{\mathsf{D}}
\def\pp{\mathsf{Princ}}
\def\FF{\mathbf{F}}
\def\CC{\mathbf{C}}
\def\bP{\mathsf{P}}

\begin{document}
	\title
	[On the non-special divisors in algebraic function fields]{On the non-special divisors
		in algebraic function fields defined over $\F_q$}
	
	\author{S. Ballet}
		
	\address{Aix Marseille Univ, CNRS, Centrale Marseille, Institut
		de Mathématiques de Marseille, Marseille, France.}
	\email{stephane.ballet@univ-amu.fr}
	
	\author{M. Koutchoukali}
		
		\address{Aix Marseille Univ, CNRS, Centrale Marseille, Institut
			de Mathématiques de Marseille, Marseille, France.}
	\email{koutchoukali.mahdi@hotmail.fr}
	

	\date{\today}
	\keywords{finite field, function field, non-special divisor}
	\subjclass[2000]{Primary 12E20; Secondary 14H05}
	\begin{abstract}
	In the theory of algebraic function fields and their applications to the information theory, the Riemann-Roch theorem plays a fundamental role.  
	But its use, delicate in general, is efficient and practical for applications especially in the case of non-special divisors.
	So, in this paper, we give a survey of the known results concerning the non-special divisors in the algebraic function fields defined over finite fields, enriched with some new results about the existence of such divisors in curves of defect k. In particular, we have chosen to be self-contained by giving the full proofs of each result, the original proofs or shorter alternative proofs.
	\end{abstract}
	
	\maketitle
	
	\section{Introduction}
	
	\subsection{General context}
	\sloppy
	This article mainly is a survey highlighting the current state-of-the-art on the existence and the construction of non-special divisors in algebraic function fields defined over finite fields. The growing importance of this topic has attracted many mathematicians and computer scientists, who developed new ideas and obtained new results. Finite fields constitute an important area of mathematics. They arise in many applications, particularly in areas related to information theory as for example cryptography and error-correcting codes. Moreover, the algebraic geometric codes "à la Goppa" whose performance as error-correcting codes has been proven since the results of Tsfasman-Vladuts-Zink in \cite{tsvlzi} also plays a important role in cryptography and in algorithmic (computational algebraic geometry) as demonstrated by the work carried out in recent years on sharing secret schemas (see for example \cite{chcr}, \cite{cacrxing}, \cite{cacramerxing} and \cite{tuuz}) or on the algebraic complexity of the multiplication in the finite fields (see survey \cite{bachpirararo}). In all these cases, the use of Riemann-Roch Theorem with non-special divisors of degree $d$ in algebraic function fields of genus $g$ over finite fields, in particular in the non-trivial case where $g-1\leq d\leq 2g-2$, is crucial.   
	
        \subsection{Organisation} 
         
         In this paper, we give a survey of the known results concerning the non-special divisors in the algebraic function fields defined over finite fields, 
	enriched with some unpublished recent results. In particular, we have chosen to be self-contained by giving the full proofs of each result that relates directly to the non-special divisors, the original proofs or shorter alternative proofs.
		
In Section \ref{prelim}, we present Notation and the well-known elementary results about non-special divisors in algebraic function fields defined over an arbitrary field. In Section \ref{Existence-g-Et-gmoins1}, we focus on the results concerning the existence of non-special divisors of degree $g$ and $g-1$. In particular, in section \ref{NR}, new results are proposed on the existence of these divisors in curves of defect $k$. 
Finally, in Section 4, we give known results concerning the explicit construction of non-special divisor of degree $g$ and $g-1$.

	
	\section{Preliminaries}\label{prelim}
	
	\subsection{Notation.}
	Let $\FF/\F_q$ be an algebraic function field of 
	one variable defined over a finite field $\F_q$. We will always 
	suppose that the 
	full constant field of $\FF/\F_q$ is $\F_q$ and denote by $g$ the genus of $\FF$. 
	If $D$ is a (rational) divisor we recall that  the $\F_q$-Riemann-Roch vector space associated to $D$ and denoted
	${\mathcal L}(D)$ is the following subspace of rational functions
	\begin{equation}\label{fct}
		{\mathcal L}(D)= \{ x \in \FF ~|~(x)\geq -D\}\cup \{0\}.
	\end{equation}
	By Riemann-Roch Theorem we know that the dimension of this vector space,
	denoted by $\dim (D)$, is related to the genus of $\FF$ and the degree of $D$
	by 
	\begin{equation} \label{RR}
		\dim (D) = \deg (D)-g+1 +\dim (\kappa-D),
	\end{equation}
	where $\kappa$ denotes a canonical divisor of $\FF/\F_q$. 
	In this relation, the complementary term $i(D) =\dim (\kappa-D)$ is called the index of speciality and 
	is not easy to compute in general.
	In particular, a divisor $D$ is non-special when the index of speciality $i(D)$
	is zero.
	 Let us recall the usual notation (for the basic notions related
	to an algebraic function field see \cite{stic}). 
	For any integer $k\geq 1$ we denote by $\bP_k(\FF/\F_q)$ 
	the set of places of degree $k$, by $B_k(\FF/\F_q)$ the 
	cardinality of this set and by $\bP(\FF/\F_q)=\cup_k \bP_k(\FF/\F_q)$.
	The divisor group of $\FF/\F_q$  is denoted by ${\div}(\FF/\F_q)$.
	If a divisor $D \in {\div}(\FF/\F_q)$ is such that
	$$D= \sum_{P \in \bP(\FF/\F_q)} n_P P,$$
	the support of $D$ is the following finite set
	$$\supp (D)=\{ P \in \bP(\FF/\F_q)~|~n_P \neq 0\}$$
	and its degree is
	$$\deg(D)=\sum_{P\in \bP(\FF/\F_q)} n_P\deg(P).$$
	We denote by ${\div}_n(\FF/\F_q)$ the set of divisors of degree $n$.
	We say that the divisor $D$ is effective if for each $P\in \supp(D)$
	we have $n_P \geq 0$ and we denote by ${\div}_n^+(\FF/\F_q)$ the set of effective divisors of degree $n$ and 
	by $A_{n}=\# {\div}_n^+(\FF/\F_q)$ the cardinal of the set ${\div}_n^+(\FF/\F_q)$. In general, we will note $\# \mathcal{U}$ 
	the cardinal of the set $\mathcal{U}$.
	The dimension of a divisor $D$, denoted by $\dim(D)$, is the dimension
	of the vector space ${\mathcal L}(D)$ defined by the formula (\ref{fct}).
	Let $x \in \FF/\F_q$, we denote by $(x)$ the divisor
	associated to the rational function $x$, namely
	$$(x)=\sum_{P\in\bP(\FF/\F_q)} v_P(x)P,$$
	where $v_P$ is the valuation at the place $P$.
	Such a divisor $(x)$ is called a principal divisor, and
	the set of  principal divisors is
	a subgroup of ${\div}_0(\FF/\F_q)$ denoted by ${\pp}(\FF/\F_q)$.
	The factor group 
	$${\mathcal C}(\FF/\F_q)={\div}(\FF/\F_q)/{\pp}(\FF/\F_q)$$
	is called the divisor class group. If $D_1$ and $D_2$ are
	in the same class, namely if the divisor $D_1-D_2$ is principal,
	we will write $D_1 \sim D_2$. We will denote
	by $[D]$ the class of the divisor $D$.\\
	If $D_1 \sim D_2$, the following holds
	$$\deg(D_1)=\deg(D_2), \quad \dim(D_1)=\dim(D_2),$$
	so that we can define the degree 
	$\deg([D])$ and the dimension $\dim([D])$ of a class.
	Since the degree of a principal divisor is $0$, we can define the subgroup
	${\mathcal C}(\FF/\F_q)^0$ of classes of degree $0$ divisors in ${\mathcal C}(\FF/\F_q)$.
	It is a finite group and we denote by $h$ its order, called the \emph{class number} of $\FF/\F_q$. Moreover if 
	$$L(t)=\sum_{i=0}^{2g} a_i t^i=\prod_{i=1}^{g} [(1-\alpha_i t)(1-\overline{\alpha_i} t)]$$
	with $|\alpha_i|=\sqrt{q}$ is the numerator of the Zeta function of $\FF/\F_q$, we have  
	$h=L(1)$. Finally we will denote $h_{n,k}$ the number of classes of divisors of degree $n$ and of dimension $k$.\\
	
	In the sequel, we may  simultaneously use the dual language of (smooth, absolutely irreducible, projective) curves by associating to $\FF/\F_q$ a unique ($\F_q$-isomorphism class of) curve $\CC/\F_q$ of genus $g$ and conversely to such a curve its function field $\CC(\F_q)$ is the set of $\F_q$-rational points on $\CC$, and $\F_q(\CC)$ is the field of rational functions on $\CC$ over $\F_q$. Because, by F.K. Schmidt's theorem (cf. \cite[Corollary V.1.11]{stic}) there always exists a rational divisor of degree one, the group ${\mathcal C}(\FF/\F_q)^0$ is isomorphic to the group of $\F_q$-rational points on the Jacobian of $\CC$, denoted by $\Jac(\CC)$. In particular $h(\FF/\F_q)=\# \Jac(\CC)(\F_q)$ is the cardinal of the set $\Jac(\CC)(\F_q)$.

	\subsection{Elementary results on non-special divisors.}
	Let $\FF/\F_q$ be a function field of genus $g>0$. First recall some results about non-special divisors (cf. \cite{stic}). If $\deg(D) < 0$, then $\dim(D) = 0$ and if $\deg(D) \geq 0$ then $\dim(D) \geq \deg(D)-g+1$. When $0\leq \deg(D)\leq 2g-2$, the computation of $\dim(D)$ is difficult, but we have the following general results.
	
	\begin{proposition} \label{BR}
		\begin{enumerate}
			
			\item $\F_q \subset \mathcal{L}(D)$ if and only if $D \geq 0$.
			\item If $deg(D) > 2g-2$, then $D$ is non-special.
			\item The property of a divisor $D$ being special or non-special depends only on the class of $D$ up to equivalence.
			\item Any canonical divisor $\kappa$ is special, $deg(\kappa) = 2g-2$ and $dim(\kappa) = g$.
			\item Any divisor $D$ with $dim(D) > 0$ and $deg(D) <g$ is special.
			\item If $D$ is non-special and $D' \geq D$ then $D'$ is non-special.
			\item For any divisor $D$ with $0 \leq deg(D) \leq 2g-2$, $dim(D) \leq 1+ \frac{1}{2} deg(D)$ holds. 
			
		\end{enumerate}
	\end{proposition}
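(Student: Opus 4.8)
The plan is to read every assertion off the Riemann--Roch relation (\ref{RR}) together with the definition (\ref{fct}) of $\mathcal{L}(D)$, disposing of the six elementary items first and reserving the real work for (7). I would open with (4), which supplies the numerical data of $\kappa$ used elsewhere: feeding $D=0$ into (\ref{RR}) and using $\dim(0)=1$ (because $\mathcal{L}(0)$ is the full constant field $\F_q$) gives $\dim(\kappa)=g$, and then feeding $D=\kappa$ into (\ref{RR}) gives $\deg(\kappa)=2g-2$; speciality of $\kappa$ is immediate since $i(\kappa)=\dim(\kappa-\kappa)=\dim(0)=1>0$. Item (1) is a direct reading of (\ref{fct}): $1\in\mathcal{L}(D)$ exactly when $(1)=0\geq -D$, i.e. when $D\geq 0$, and conversely $D\geq 0$ puts every constant into $\mathcal{L}(D)$. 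For (2) I would note $\deg(\kappa-D)=2g-2-\deg(D)<0$, whence $\dim(\kappa-D)=0$ by the recalled vanishing in negative degree, so $i(D)=0$. Item (3) follows because $D_1\sim D_2$ forces $\kappa-D_1\sim\kappa-D_2$, and $\dim$ is constant on an equivalence class.

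The two remaining elementary items use (\ref{RR}) in the form $i(D)=\dim(D)-\deg(D)+g-1$. For (5), $\dim(D)>0$ and $\deg(D)<g$ give $i(D)\geq 1-\deg(D)+g-1=g-\deg(D)\geq 1>0$, so $D$ is special. For (6), the one fact needed is the monotonicity $\mathcal{L}(A)\subseteq\mathcal{L}(A')$ whenever $A\leq A'$, which is immediate from (\ref{fct}); since $D'\geq D$ yields $\kappa-D'\leq\kappa-D$, we get $\mathcal{L}(\kappa-D')\subseteq\mathcal{L}(\kappa-D)=\{0\}$, hence $i(D')=0$ and $D'$ is non-special.

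Item (7), Clifford's inequality, is where the genuine content sits. If $\dim(D)=0$ the bound is trivial, and if $D$ is non-special then (\ref{RR}) gives $\dim(D)=\deg(D)-g+1\leq 1+\tfrac12\deg(D)$ because $\deg(D)\leq 2g-2<2g$; so I may assume $\dim(D)>0$ and $\dim(\kappa-D)>0$. Choosing effective divisors $A\sim D$ and $B\sim\kappa-D$ (legitimate since both spaces are nonzero) leaves all dimensions unchanged and yields $A+B\sim\kappa$, so $\dim(A+B)=g$ by (4). Substituting $\dim(A+B)=g$ and $\dim(B)=\dim(\kappa-D)=\dim(D)-\deg(D)+g-1$ into the super-additivity estimate
\begin{equation*}
\dim(A)+\dim(B)\leq\dim(A+B)+1\qquad(A,B\geq 0)
\end{equation*}
gives exactly $2\dim(D)\leq\deg(D)+2$, i.e. the desired bound.

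Proving this super-additivity lemma is the main obstacle. The route I would take is to fix nonzero $f_0\in\mathcal{L}(A)$ and $g_0\in\mathcal{L}(B)$ and place the two subspaces $f_0\mathcal{L}(B)$ and $\mathcal{L}(A)g_0$ inside $\mathcal{L}(A+B)$; they have dimensions $\dim(B)$ and $\dim(A)$ respectively, so the subspace dimension formula gives $\dim(A)+\dim(B)\leq\dim(A+B)+\dim\big(f_0\mathcal{L}(B)\cap\mathcal{L}(A)g_0\big)$. A short valuation computation (writing a common element as $f_0g_0u$) identifies the intersection with $\mathcal{L}\big(\min(A+(f_0),\,B+(g_0))\big)$, the Riemann--Roch space of an effective divisor, so its dimension is at least $1$. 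The entire difficulty is therefore to bring this intersection down to a single line, i.e. to choose the effective representatives $A+(f_0)$ and $B+(g_0)$ so that $\min(A+(f_0),B+(g_0))$ carries only constants; over $\F_q$ this cannot be arranged by a bare generic-position argument and is the step demanding care (the constants are precisely what produce the extra $+1$). Once the lemma is secured, (7) follows as above and the proposition is complete.
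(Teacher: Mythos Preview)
The paper does not supply its own proof of this proposition; it is recorded with a bare reference to \cite{stic}. So there is nothing in the paper to compare your argument against beyond the standard treatment in Stichtenoth.

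Your handling of (1)--(6) is correct and is exactly the usual set of one-line Riemann--Roch manipulations. The problem is (7). You reduce Clifford's inequality, correctly, to the lemma
\[
\dim(A)+\dim(B)\leq 1+\dim(A+B)\qquad(A,B\geq 0),
\]
and you set up the identity $\dim(A)+\dim(B)=\dim\bigl(f_0\mathcal{L}(B)+\mathcal{L}(A)g_0\bigr)+\dim\bigl(f_0\mathcal{L}(B)\cap\mathcal{L}(A)g_0\bigr)$ with the intersection identified as $\mathcal{L}\bigl(\min(A+(f_0),B+(g_0))\bigr)$. But you then need this last space to be one-dimensional, and your proposed mechanism---choose $f_0,g_0$ so that the effective representatives $A+(f_0)$ and $B+(g_0)$ have disjoint supports---is precisely the step that can fail over $\F_q$: a complete linear series has only finitely many $\F_q$-rational members, and nothing prevents every pair of them from sharing a place. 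You flag this yourself (``cannot be arranged by a bare generic-position argument'') and then write ``once the lemma is secured'' without securing it. As submitted, (7) is not proved.

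The arguments for this lemma that work uniformly over every constant field avoid any disjointness requirement. One route, essentially the one in \cite{stic}, runs an induction in which one repeatedly replaces the pair $(A,B)$ by $(\min(A,B),\max(A,B))$ after passing to suitable effective representatives; the point is that $\min+\max=A+B$ while the pair becomes strictly more nested, and the nested case $A\leq B$ is handled directly by comparing $\mathcal{L}(B)\hookrightarrow\mathcal{L}(A+B)$ against $\mathcal{L}(0)\hookrightarrow\mathcal{L}(A)$ via a filtration by places of $A$. Either rework your intersection argument along these lines or, as the paper does, cite \cite{stic} for (7); but the disjoint-support plan as stated does not close the gap.
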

	
	For the rational function field $\FF = \F_q(x)\ (g=0)$, there is no non-zero regular differential, thus, all divisors of degree $d \geq 0$ are non-special. From now, we focus on the existence of non-special divisors of degree $g$ or $g-1$. Note that $g-1$ is the least possible degree for a divisor $D$ to be non-special. We have the following trivial observations.
	
	\begin{lemma}
		Assume $g \geq 1$. Let $D \in {\div}(\FF/\F_q)$ and set $d=deg(D)$.
		\begin{enumerate}
			
			\item If $d=g$, $D$ is a non-special divisor if and only if $dim(D)=1$. Assume that $D$ is a non-special divisor of degree $g$; then, $D \sim D_0$, where $D_0$ is effective. If $D \geq 0$ and $d=g$, $D$ is non-special divisor if and only if $\mathcal{L}(D)=\F_q$.
			\item If $d=g-1$, $D$ is a non-special divisor if and only if $dim(D)=0$. A non-special divisor of degree $g-1$, if any, is never effective.
			\item If $g >1$ and $A_{g-1}=0$, then any divisor of degree $g-1$ is non-special.
			
		\end{enumerate}
		
	\end{lemma}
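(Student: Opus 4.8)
The plan is to deduce all three statements directly from the Riemann--Roch relation (\ref{RR}) together with the elementary facts collected in Proposition \ref{BR}. The common mechanism is that, once $\deg(D)$ is fixed to $g$ or $g-1$, the identity $\dim(D)=\deg(D)-g+1+i(D)$ converts the vanishing of the index of speciality $i(D)$ into an exact numerical value of $\dim(D)$; the remaining ingredient is the standard dictionary between nonzero elements of $\mathcal{L}(D)$ and effective divisors linearly equivalent to $D$.

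For part (1), with $d=g$, substituting into (\ref{RR}) gives
\begin{equation*}
\dim(D)=1+i(D),
\end{equation*}
so $i(D)=0$ is equivalent to $\dim(D)=1$. If $D$ is non-special, then $\dim(D)=1>0$, so $\mathcal{L}(D)$ contains a nonzero $x$; by (\ref{fct}) the divisor $D_0:=D+(x)$ is effective and satisfies $D_0\sim D$. For the last assertion I would invoke Proposition \ref{BR}(1): when $D\geq 0$ we have $\F_q\subseteq\mathcal{L}(D)$, and since $\F_q$ is one-dimensional over itself, the condition $\dim(D)=1$ holds precisely when $\mathcal{L}(D)=\F_q$.

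For part (2), with $d=g-1$, the same substitution yields $\dim(D)=i(D)$, so non-speciality is equivalent to $\dim(D)=0$. The non-effectivity claim is then the contrapositive of Proposition \ref{BR}(1): an effective $D$ forces $\F_q\subseteq\mathcal{L}(D)$, hence $\dim(D)\geq 1$, so $D$ is special.

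For part (3) I would argue by contraposition: if some $D$ of degree $g-1$ were special, part (2) gives $\dim(D)>0$, producing a nonzero $x\in\mathcal{L}(D)$ and thus an effective divisor $D+(x)$ of degree $g-1$, contradicting $A_{g-1}=0$. There is no deep obstacle anywhere; the only point meriting care is the hypothesis $g>1$, whose role is exactly to guarantee $g-1>0$. Indeed, for $g=1$ one would have $A_{g-1}=A_0\geq 1$ (the zero divisor being effective of degree $0$), so the hypothesis $A_{g-1}=0$ would be vacuous and the statement empty.
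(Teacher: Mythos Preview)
Your proof is correct. The paper does not actually supply a proof of this lemma; it introduces the three assertions as ``trivial observations'' and moves on, so your argument is precisely the routine unpacking of Riemann--Roch and Proposition~\ref{BR} that the authors are tacitly invoking.
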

	
	A consequence of assertion 1 is
	
	\begin{lemma}\label{gtog-1}
		
		Assume that $D$ is an effective non-special divisor of degree $g \geq 1$. If there exists a degree one place $P$ such that $ P \notin supp(D)$, then $D-P$ is a non-special divisor of degree $g-1$
		
	\end{lemma}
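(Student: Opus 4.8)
The plan is to translate both non-specialty conditions into statements about Riemann--Roch spaces and then exploit the explicit description of $\mathcal{L}(D)$ coming from assertion 1 of the preceding lemma. Since $D$ is effective, non-special, and of degree $g$, assertion 1 gives $\dim(D)=1$ and, more precisely, $\mathcal{L}(D)=\F_q$ (the constants). This identity is the single fact I would lean on throughout.

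First I would record the degree bookkeeping: because $P$ is a place of degree one, $\deg(D-P)=\deg(D)-1=g-1$. By assertion 2 of the preceding lemma, a divisor of degree $g-1$ is non-special precisely when its dimension is $0$, so the whole statement reduces to proving $\mathcal{L}(D-P)=\{0\}$. Next I would use the inclusion of Riemann--Roch spaces: since $D-P\leq D$, every $x$ with $(x)\geq -(D-P)$ also satisfies $(x)\geq -D$, whence $\mathcal{L}(D-P)\subseteq\mathcal{L}(D)=\F_q$. Thus any element of $\mathcal{L}(D-P)$ must be a constant.

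It then remains to rule out nonzero constants. A nonzero constant $c\in\F_q$ has principal divisor $(c)=0$, and membership $c\in\mathcal{L}(D-P)$ would require $0=(c)\geq -(D-P)=P-D$, i.e. $D\geq P$, equivalently $v_P(D)\geq 1$. But the hypothesis $P\notin\supp(D)$ means exactly $v_P(D)=0$, a contradiction. Hence no nonzero constant lies in $\mathcal{L}(D-P)$, so $\mathcal{L}(D-P)=\{0\}$, $\dim(D-P)=0$, and $D-P$ is a non-special divisor of degree $g-1$.

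There is no genuine obstacle here; the argument is essentially forced once $\mathcal{L}(D)=\F_q$ is in hand. The only point demanding care is the direction of the divisor inequality when passing to $-(D-P)$ and the bookkeeping that $P\notin\supp(D)$ is equivalent to $v_P(D)=0$, which is precisely what blocks the constant functions from surviving in $\mathcal{L}(D-P)$.
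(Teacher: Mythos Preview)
Your proof is correct and follows exactly the approach the paper indicates: the lemma is stated there as a direct consequence of assertion~1 of the preceding lemma (namely $\mathcal{L}(D)=\F_q$), and you have simply written out that consequence in full.
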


	\section{Existence of non-special divisors of degree $g$ and $g-1$}\label{Existence-g-Et-gmoins1}
	Unless otherwise specified, the results in this section come from \cite{balb} by S. Ballet and D. Le Brigand. They determine the necessary and sufficient conditions for the existence of non-special divisors of degree $g$ and $g-1$ in the general case and they apply them to the cases of $g = 1$ and $g=2$.
	
	\subsection{General case}
	
	Here, we give some general results about non-special divisors of degree $g$ and $g-1$.
	
	\begin{proposition} \label{prop4}
		
		Let $\FF/\F_q$ be an algebraic function field of genus $g \geq 1$.
		
		\begin{enumerate}
			
			\item If  $B_1(\FF/\F_q) \geq g$, there exists a non-special divisor $D$ such that $D \geq 0$, $deg(D) = g$ and $supp(D) \subset  \bP_1(\FF/\F_q)$.
			\item If $B_1(\FF/\F_q) \geq g+1$, there exists a non-special divisor such that $deg(D) = g-1$ and $supp(D) \subset  \bP_1(\FF/\F_q)$.
			
		\end{enumerate}
		
	\end{proposition}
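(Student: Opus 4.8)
The plan for assertion (1) is to build the divisor greedily, one rational place at a time, keeping the dimension equal to $1$ throughout. Start from $D_0=0$, which is effective, supported on $\bP_1$, and satisfies $\dim(D_0)=1$. Suppose we have produced an effective divisor $D_j$ with $\supp(D_j)\subset\bP_1$, $\deg(D_j)=j$ and $\dim(D_j)=1$; by Riemann--Roch (\ref{RR}) this is equivalent to $i(D_j)=\dim(\kappa-D_j)=g-j$. If $j<g$ then $\dim(\kappa-D_j)=g-j\geq 1$, so the linear system $|\kappa-D_j|$ is non-empty. The key elementary fact I would use is that, for a degree-one place $P$, one has $\dim(D_j+P)=\dim(D_j)$ if and only if $P$ is not a base point of $|\kappa-D_j|$: indeed $\dim(D_j+P)-\dim(D_j)=1+\bigl(i(D_j+P)-i(D_j)\bigr)$ by (\ref{RR}), and $i(D_j+P)=\dim(\kappa-D_j-P)$ equals $i(D_j)-1$ exactly when removing $P$ drops the dimension of $|\kappa-D_j|$. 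Thus it suffices to find a rational place that is not a base point of $|\kappa-D_j|$ and to set $D_{j+1}=D_j+P$; after $g$ steps the divisor $D_g$ is effective of degree $g$, supported on $\bP_1$, with $\dim(D_g)=1$, hence non-special by the characterization of non-special divisors of degree $g$.

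The crux---and the only real obstacle---is to guarantee at each step the existence of a rational non-base point, i.e.\ to bound the base locus of $|\kappa-D_j|$. Write $|\kappa-D_j|=F_j+|\kappa-D_j-F_j|$, where $F_j\geq 0$ is the fixed part and $|\kappa-D_j-F_j|$ is base-point-free with $\dim(\kappa-D_j-F_j)=\dim(\kappa-D_j)=g-j$. Since this dimension is $\geq 1$ one has $0\leq\deg(\kappa-D_j-F_j)\leq 2g-2$, so Clifford's inequality (Proposition \ref{BR}, item (7)) applies and gives $g-j\leq 1+\tfrac12\deg(\kappa-D_j-F_j)$, whence $\deg(\kappa-D_j-F_j)\geq 2(g-j-1)$ and therefore $\deg(F_j)\leq(2g-2-j)-2(g-j-1)=j$. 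Consequently the base points of $|\kappa-D_j|$, which are exactly the places occurring in $\supp(F_j)$, number at most $\deg(F_j)\leq j\leq g-1$. As $B_1(\FF/\F_q)\geq g>g-1$, at least one rational place does not lie in $\supp(F_j)$, and this is the place $P$ we add. This closes the induction and proves (1).

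For assertion (2) I would specialize (1) and then remove a place. Since $B_1(\FF/\F_q)\geq g+1\geq g$, part (1) furnishes an effective non-special divisor $D$ of degree $g$ with $\supp(D)\subset\bP_1$. Being effective of degree $g$ and supported on degree-one places, $D$ meets at most $g$ distinct rational places, so $\#\supp(D)\leq g<g+1\leq B_1(\FF/\F_q)$; hence there is a rational place $P\notin\supp(D)$. Lemma \ref{gtog-1} then shows that $D-P$ is a non-special divisor of degree $g-1$, and its support is contained in $\supp(D)\cup\{P\}\subset\bP_1$, as required.
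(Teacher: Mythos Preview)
Your argument is correct and follows essentially the same line as the paper. For (1) the paper merely cites \cite[Proposition~I.6.10]{stic}, whose proof is exactly the greedy construction you describe: start from $D_0=0$ and add rational places one at a time while keeping $\dim(D_j)=1$. Your use of Clifford's inequality (Proposition~\ref{BR}(7)) to bound the degree of the fixed part $F_j$ of $|\kappa-D_j|$ by $j\leq g-1$ is a clean way to guarantee a rational non-base point at each step; Stichtenoth's own write-up organises the count slightly differently, but the underlying idea is the same. For (2) your argument coincides with the paper's: the paper first restricts to a subset $T\subset\bP_1$ of size $g$ before invoking (1), while you simply observe $\#\supp(D)\leq g<B_1$; both ensure a rational place $P\notin\supp(D)$ exists so that Lemma~\ref{gtog-1} applies.
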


	\begin{proof}
		\begin{enumerate}
			\item cf. \cite[Proposition I.6.10]{stic}.
			\item Let $T \subset \bP_1(\FF/\F_q)$ be such that $\#T = g$ and, using assertion 1, let $D \geq 0$  be a non-special divisor such that $deg(D) = g$ and $supp(D) \subset T$. Select $P \in \bP_1(\FF/\F_q ) \setminus supp(D)$ and apply Lemma \ref{gtog-1}.
		\end{enumerate}
	\end{proof}
	
	 We denote by $\mathcal{E}_g$ and $\mathcal{E}_{g-1}$ the following holds:\\
	
	$\mathcal{E}_g$ : $\FF/\F_q$ has an effective non-special divisor of degree $g$, and \\
	$\mathcal{E}_{g-1}$ : $\FF/\F_q$ has a non-special divisor of degree $g-1$. \\
	
	If $\FF/\F_q$ has enough rational places compared to the genus, then $\mathcal{E}_g$ and $\mathcal{E}_{g-1}$ are true.
	
		\begin{proposition} \label{prop5}
		Let $\FF/\F_q$ be a function field of genus $g$. Denote by $h$ the order of the divisor class group of $\FF/\F_q$.
		\begin{enumerate}
			\item If $A_g < h(q+1)$, then $\mathcal{E}_g$ is true.
			\item If $A_{g-1} < h$, then $\mathcal{E}_{g-1}$ is true.
			\item Assume $g \geq 2$. If $A_{g-2} < h$, then $\mathcal{E}_g$ is true.
			\item If $g=2$ or $3$, $\mathcal{E}_g$ is untrue if and only if $A_{g-2}=h$.
		\end{enumerate}
	\end{proposition}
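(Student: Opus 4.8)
The plan is to reduce every statement to a count of divisor classes organized by dimension. Recall that, since $\FF/\F_q$ admits a rational divisor of degree one (F.K. Schmidt), the set of divisor classes of any fixed degree $n$ has cardinality $h$, and that a class $[D]$ of dimension $k=\dim([D])$ contains exactly $\frac{q^{k}-1}{q-1}$ effective divisors (the projectivization of $\mathcal{L}(D)$). Summing over the $h$ classes of degree $n$ therefore yields the master identity
\begin{equation}\label{master}
A_n=\sum_{k\geq 0} h_{n,k}\,\frac{q^{k}-1}{q-1},\qquad \sum_{k\geq 0}h_{n,k}=h .
\end{equation}
I would next translate the two events using Riemann--Roch \eqref{RR}: a divisor of degree $g$ has $\dim(D)=1+i(D)\geq 1$, so $h_{g,0}=0$ and $\mathcal{E}_g$ holds if and only if $h_{g,1}\geq 1$; a divisor of degree $g-1$ has $\dim(D)=i(D)$, so $\mathcal{E}_{g-1}$ holds if and only if $h_{g-1,0}\geq 1$.

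For assertions 1 and 2 I would argue by contraposition inside \eqref{master}. If $\mathcal{E}_g$ fails then $h_{g,1}=0$, hence all $h$ classes of degree $g$ have $k\geq 2$; since $\frac{q^{k}-1}{q-1}=1+q+\dots+q^{k-1}\geq q+1$ for $k\geq 2$, \eqref{master} forces $A_g\geq h(q+1)$, which is the contrapositive of (1). Likewise, if $\mathcal{E}_{g-1}$ fails then $h_{g-1,0}=0$, so every one of the $h$ classes of degree $g-1$ has $k\geq 1$; as $\frac{q^{k}-1}{q-1}\geq 1$ for $k\geq 1$, we obtain $A_{g-1}\geq h$, the contrapositive of (2).

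The key device for (3) and (4) is canonical duality: the map $[D]\mapsto[\kappa-D]$ is a bijection from classes of degree $g$ onto classes of degree $g-2$ with inverse $[E]\mapsto[\kappa-E]$ (here $g\geq 2$ guarantees $g-2\geq 0$), and Riemann--Roch gives $\dim(\kappa-D)=i(D)=\dim(D)-1$ on degree $g$. Hence a class of degree $g$ is non-special, i.e. of dimension $1$, exactly when its dual class of degree $g-2$ has dimension $0$, so that $h_{g,1}=h_{g-2,0}$. Thus $\mathcal{E}_g$ holds if and only if $h_{g-2,0}\geq 1$, and rerunning the assertion-2 argument now at degree $g-2$ gives $A_{g-2}<h\Rightarrow h_{g-2,0}\geq 1\Rightarrow \mathcal{E}_g$, which proves (3).

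For (4) I would exploit that when $g\in\{2,3\}$ the relevant degree $g-2\in\{0,1\}$ satisfies the Clifford-type bound $\dim(D)\le 1+\tfrac12\deg(D)$ of Proposition~\ref{BR}, so every class of degree $g-2$ has dimension at most $1$. Then \eqref{master} collapses to $A_{g-2}=h_{g-2,1}$ while $h_{g-2,0}+h_{g-2,1}=h$, whence $h_{g-2,0}=0$ if and only if $A_{g-2}=h$; combined with the equivalence $\mathcal{E}_g\Leftrightarrow h_{g-2,0}\geq 1$ from the previous step, this is exactly ``$\mathcal{E}_g$ is untrue $\Leftrightarrow A_{g-2}=h$''. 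I expect the main obstacle to be setting up the two structural inputs cleanly---the per-class count $\frac{q^{k}-1}{q-1}$ and the dimension-shifting canonical bijection---after which each assertion is a one-line inequality or equality in \eqref{master}; the only point requiring care is the degenerate case $g=2$ (degree $0$), where the principal class is the unique class of dimension $1$, so that $A_0=h$ reduces to $h=1$.
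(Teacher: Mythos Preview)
Your proof is correct. For assertions (1) and (2) it is essentially the paper's argument in different packaging: the paper uses the map $\psi_{d,D_0}\colon {\div}_d^+(\FF/\F_q)\to\Jac(\FF/\F_q)$, $D\mapsto[D-D_0]$, and observes that the fiber over a class of dimension $k$ has size $\frac{q^k-1}{q-1}$, which is precisely your master identity unpacked class by class.

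Where you genuinely diverge is in (3) and (4). The paper imports the zeta-function relation $A_g=h+qA_{g-2}$ from \cite{nixi}, which gives $A_g<(q+1)h\Leftrightarrow A_{g-2}<h$ at once, and for (4) applies the Clifford bound at degree $g$ (forcing $\dim(D)=2$ exactly when $g\in\{2,3\}$) so that failure of $\mathcal{E}_g$ becomes $A_g=(q+1)h$, hence $A_{g-2}=h$. You instead use the canonical involution $[D]\mapsto[\kappa-D]$ to transport the question to degree $g-2$, obtaining $h_{g,1}=h_{g-2,0}$ directly, and then apply Clifford at degree $g-2$ (where it forces $\dim\leq 1$) rather than at degree $g$. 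Your route is self-contained---no external zeta machinery---and explains \emph{why} $A_{g-2}$ is the right quantity: it is Serre duality. Indeed your ingredients rederive the identity the paper quotes: using $h_{g,k}=h_{g-2,k-1}$ and $\frac{q^{j+1}-1}{q-1}=q\cdot\frac{q^j-1}{q-1}+1$ in \eqref{master} at $n=g$ gives $A_g=qA_{g-2}+h$. The paper's path is shorter once the citation is granted; yours is more elementary and makes the structure transparent.
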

	
	\begin{proof}
		Recall that, in any function field, there exists a degree 1 divisor (this result of F.K. Schmidt; see \cite[Corollary V.1.11]{stic} for instance), so there exist divisors of any degree $ \geq 1$. Let $d \geq 1$ and $D_0 \in {\div}_d^+(\FF/\F_q)$, and consider the map $\psi_{d,D_0}$:
		\begin{equation*}
			\begin{array}{cccc} 
				\psi_{d,D_0}: & {\div}_d^+(\FF/\F_q)  & \longrightarrow & \mathcal{J}ac(\FF/\F_q) \cr
				& D &  \longmapsto & [D - D_0]
			\end{array}
		\end{equation*}
		
		\begin{enumerate}
			\item First, it is well known that $1 \leq h \leq A_g$ is true for any function field. Indeed, let $D$ be such that $deg(D)=g$. By Riemann-Roch, $dim(D) \geq 1$; thus, there exists an effective divisor of degree $g$ which is equivalent to $D$. So assume $D_0 \in {\div}_g^+(\FF/\F_q)$ and consider the map $\psi_{g,D_0}$. For all $[R] \in \mathcal{J}ac(\FF/\F_q)$, we have $deg(R+D_0)=g$; thus, $dim(R+D_0) \leq 1$ and there exists $u \in \FF^*$ such that $D:=R+D_0+div(u)$ is in $ {\div}_g^+(\FF/\F_q)$ and $[R]=[D-D_0] = \psi_{g,D_0}(D)$. This proves that $\psi_{g,D_0}$ is surjective and that $h \leq A_g$. Assume now that $\FF/\F_q$ has no non-special divisor $D$ of degree $g$. Then, $dim(D) \geq 2$ for all degree $g$ divisors; thus, for all $[R] \in \mathcal{J}ac(\FF/\F_q)$, we have
			$$ \# \{ D \in {\div}_g^+(\FF/\F_q), [D-D_0] = [R] \}  = \frac{q^{dim(R+D_0)}-1}{q-1} \geq \frac{q^2-1}{q-1}=q+1$$
			and $A_g \geq h(q+1)$.
			\item A divisor $D$ of degree $g-1$ is non-special if and only if $dim(D)=0$. If $g=1$, there exists a non-special divisor of degree $g-1=0$  if and only if $h=B_1 > 1 = A_0$, since two distinct degree one places are not equivalent. Assume now that $g >1$. Hence, it is sufficient to prove the existence of a divisor of degree $g-1$ which is not equivalent to any effective divisor. If $A_{g-1}=0$, the result is proved. otherwise, let $D_0$ be an effective divisor of degree $g-1 \geq 1$ and consider the map $\psi_{g-1,D_0}$. If $A_{g-1} < h$, this map is not surjective. Hence, there exists a zero-degree divisor $R$ such that $[R]$ is not in the image of $\psi_{g-1,D_0}$. Consequently, $D=R+D_0$ is a divisor of degree $g-1$ which is not equivalent to an effective divisor. Thus, $D$ is non-special.
			\item From the functional equation of the zeta function, it can be deduced (see \cite[Lemma 3(i)]{nixi}) that, for $g \geq 1$, one has
			$$ A_n = q^{n+1-g}A_{2g-2-n} + h \frac{q^{n+1-g}-1}{q-1} \ for\ all\ 0 \leq n \leq 2g-2. $$
			For $g \geq 2$ and $n=g$ this gives
			$$ A_g=h+qA_{g-2}. $$
			Thus, if $g \geq 2$, 
			$$ A_g < (q+1)h \Longleftrightarrow A_{g-2} <h. $$ 
			\item Assume that $g=2$ or $3$. Then if $deg(D)=g$ and $dim(D) \geq 2$, one has $dim(D)=2$ by assertion 7 of Proposition \ref{BR}, since $dim(D) \leq \frac{g}{2} +1$. Thus, $\mathcal{E}_g$ is untrue if and only if $A_g = (q+1)h$, which is equivalent to $A_ {g-2}=h$.
		\end{enumerate}
	\end{proof}
	
	We quote the following consequence of assertion 2.
	
	\begin{corollary} \label{cor6}
		Let $\FF/\F_q$ be an algebraic function field of genus $g \geq 2$ such that $A_{g-1} \geq 1$. Denote by $h$ the order of the divisor class group of $\FF/\F_q$. Then $\mathcal{E}_{g-1}$ is untrue if and only if there exists $h$ elements of  ${\div}_{g-1}^+(\FF/\F_q)$ pairwise non-equivalent.
	\end{corollary}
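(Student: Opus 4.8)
The plan is to prove Corollary \ref{cor6} as a direct reformulation of assertion 2 of Proposition \ref{prop5}, using the surjectivity criterion of the map $\psi_{g-1,D_0}$ together with the standing hypothesis $A_{g-1} \geq 1$. First I would recall that, since $g \geq 2$ and $A_{g-1} \geq 1$, there exists an effective divisor $D_0$ of degree $g-1$, so the map
\[
\psi_{g-1,D_0}: {\div}_{g-1}^+(\FF/\F_q) \longrightarrow \Jac(\FF/\F_q)
\]
is well defined. The key observation, already implicit in the proof of assertion 2, is that $\mathcal{E}_{g-1}$ holds precisely when there exists a degree $g-1$ divisor that is not equivalent to any effective divisor, which is exactly the statement that $\psi_{g-1,D_0}$ fails to be surjective. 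Hence $\mathcal{E}_{g-1}$ is untrue if and only if $\psi_{g-1,D_0}$ is surjective onto the class group of order $h$.

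Next I would translate this surjectivity into the counting statement in the corollary. The image of $\psi_{g-1,D_0}$ is the set of classes $[D - D_0]$ for $D$ effective of degree $g-1$; shifting by $[D_0]$, this is a bijection between $\mathrm{Im}(\psi_{g-1,D_0})$ and the set of \emph{classes} of effective divisors of degree $g-1$. Therefore $\psi_{g-1,D_0}$ is surjective if and only if every one of the $h$ classes of $\Jac(\FF/\F_q)$ contains an effective representative of degree $g-1$, that is, if and only if the number of pairwise non-equivalent effective divisors of degree $g-1$ equals $h$. This gives exactly the condition that there exist $h$ elements of ${\div}_{g-1}^+(\FF/\F_q)$ that are pairwise non-equivalent, since the number of distinct classes among effective divisors can never exceed $h$ and attains $h$ precisely in the surjective case.

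I would then assemble the equivalences: $\mathcal{E}_{g-1}$ untrue $\Leftrightarrow$ $\psi_{g-1,D_0}$ surjective $\Leftrightarrow$ all $h$ degree-zero classes are hit $\Leftrightarrow$ there are $h$ pairwise non-equivalent effective divisors of degree $g-1$. The main subtlety to handle carefully is the bookkeeping in the second step: one must verify that the map sending a \emph{class} of effective divisors to its image under $\psi_{g-1,D_0}$ is injective (so that counting classes of effective divisors equals counting points in the image), which follows immediately because $[D_1 - D_0] = [D_2 - D_0]$ forces $D_1 \sim D_2$. No genuine obstacle arises here; the only point requiring attention is to state the equivalence at the level of classes rather than of divisors, so that the cardinality $h$ is matched correctly and the hypothesis $A_{g-1} \geq 1$ is used exactly to guarantee that the domain of $\psi_{g-1,D_0}$ is nonempty.
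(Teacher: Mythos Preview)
Your proposal is correct and follows essentially the same approach as the paper: both arguments hinge on the map $\psi_{g-1,D_0}$ and the observation that $\mathcal{E}_{g-1}$ fails precisely when this map is surjective, which in turn is equivalent to the effective degree $g-1$ divisors representing all $h$ classes. The paper phrases the count slightly differently by fixing a maximal set $D_1,\ldots,D_r$ of pairwise non-equivalent effective divisors and arguing the two cases $r=h$ and $r<h$ directly, but the logic is identical to yours.
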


	\begin{proof}
		Let $r$ be the maximum number of pairwise non-equivalent elements of  ${\div}_{g-1}^+(\FF/\F_q)$ and let $D_1, \ldots, D_r$ be elements of  ${\div}_{g-1}^+(\FF/\F_q)$ pairwise non-equivalent. Then:
		$$ \{ [0] = [D_1-D_1],[D_2-D_1], \ldots, [D_r-D_1] \} $$
		is a subset of $\mathcal{J}ac(\FF/\F_q)$ of order $r$. If $r=h$, for any divisor $D$ of degree $d=g-1$, we have $[D-D_1]=[D_i-D_1]$ for some $i$, $1 \leq i \leq h$, and then $D \sim D_i$. Thus, $dim(D) \geq 1$. If $r <h$, $\psi_{g-1,D_1}$ is not surjective and the result follows.
	\end{proof}

%
%

	\subsection{Existence of non-special divisors of degree $g$}
	
	Now, we particularly focus on the non-special divisors of degree $g$. First, we give useful properties and interesting information to study the existence non-special divisors. For example, it is known that this existence is relied on the number of effective divisors of certain degrees $A_n$. 
	
	\begin{lemma}
		If $B_1 \geq m \geq 1$, then for all $n \geq 2$ one has
		\begin{equation}
			A_n \geq m A_{n-1} - \frac{m(m-1)}{2} . A_{n-2}. \label{c}
		\end{equation}
	\end{lemma}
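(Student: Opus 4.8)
The plan is to count in two different ways the pairs consisting of one of $m$ chosen rational places together with an effective divisor of degree $n-1$. Since $B_1 \geq m$, I can fix $m$ pairwise distinct degree-one places $P_1, \ldots, P_m \in \bP_1(\FF/\F_q)$ and consider the map
$$\Phi \colon \{1, \ldots, m\} \times {\div}_{n-1}^+(\FF/\F_q) \longrightarrow {\div}_n^+(\FF/\F_q), \qquad (i, D') \longmapsto D' + P_i,$$
which is well defined because each $P_i$ has degree one, so $D' + P_i$ is effective of degree $n$. The domain has cardinality $m\,A_{n-1}$, and computing this total through the fibers of $\Phi$ gives $m\,A_{n-1} = \sum_{D} c(D)$, where for $D \in {\div}_n^+(\FF/\F_q)$ I set $c(D) = \#\{\, i : v_{P_i}(D) \geq 1 \,\}$. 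Indeed, the fiber of $\Phi$ over $D$ consists exactly of the pairs $(i, D - P_i)$ with $v_{P_i}(D) \geq 1$, so its size is $c(D)$.

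Next I would bound how far $\sum_D c(D)$ can exceed $A_n$. The elementary inequality $c - 1 \leq \binom{c}{2}$ holds for every integer $c \geq 0$ (the cases $c = 0, 1$ are checked directly, and it is immediate for $c \geq 2$), so summing over all $D$ yields
$$m\,A_{n-1} - A_n = \sum_{D} \big( c(D) - 1 \big) \leq \sum_{D} \binom{c(D)}{2}.$$
The right-hand side counts triples consisting of $D$ and an unordered pair $\{i,j\}$ with $i \neq j$, $v_{P_i}(D) \geq 1$ and $v_{P_j}(D) \geq 1$. For each of the $\binom{m}{2}$ pairs $\{i,j\}$, the map $D \mapsto D - P_i - P_j$ is a bijection onto ${\div}_{n-2}^+(\FF/\F_q)$, so $\sum_D \binom{c(D)}{2} = \binom{m}{2} A_{n-2} = \tfrac{m(m-1)}{2}\,A_{n-2}$. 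Rearranging the displayed inequality then gives the claimed bound.

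The argument is essentially formal, so I expect no deep obstacle; the only points requiring care are the fiber computation and the passage through $c(D) - 1 \leq \binom{c(D)}{2}$, whose role is precisely to pay for the overcounting of divisors meeting several of the $P_i$ simultaneously. The main things to get right will be the edge cases of that inequality — in particular that divisors $D$ with $c(D) = 0$ contribute a harmless $-1$ rather than breaking the estimate — together with the verification that $D - P_i - P_j$ is genuinely effective of degree $n - 2$, which is exactly where the hypothesis $n \geq 2$ and the distinctness of $P_1, \ldots, P_m$ are used.
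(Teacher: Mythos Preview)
Your argument is correct: the double-counting via $\Phi$ together with the elementary bound $c-1\leq\binom{c}{2}$ yields exactly the claimed inequality, and you have identified precisely where $n\geq 2$ and the distinctness of the $P_i$ enter. The paper itself does not give a proof but merely cites \cite[Lemma~4]{nixi}; the argument there is this same combinatorial count, so your approach matches.
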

	
	\begin{proof}
		See \cite[Lemma 4]{nixi}.
	\end{proof}
	
	Moreover, it also is linked to the class number $h$ of algebraic function fields.
	
	\begin{proposition} \label{tab}
		Let $\FF/\F_q$ be a function field of genus $g \geq 2$. We denote by $h$ its divisor class number. 
		\begin{itemize}
			\item Up to isomorphism, there are 4 function fields $\FF/\F_q$, 2 of them being hyperelliptic, such that $h=1$ and $g \geq 2$. They are obtained for $\FF = \mathbb{F}_2(x,y)$ with:\\
			
			\begin{tabular}{|*{5}{c|}}
				\hline
				$g$ & Equation & $B_1$  & $B_2$ & $B_3$\tabularnewline
				\hline
				2 & $y^2+y+(x^5+x^3+1)=0 $ & 1 & 2 &  \tabularnewline
				2 & $y^2+y+(x^3+x^2+1)/(x^3+x+1)=0$ & 0 & 3 & \tabularnewline
				\hline
				3 & $y^4+xy^3+(x+1)y+(x^4+x^3+1)=0$ & 0 & 0 & 1 \tabularnewline
				3 & $y^4+xy^3+(x+1)y+(x^4+x+1)=0$ & 0 & 1 & 1 \tabularnewline
				\hline
			\end{tabular}\\
			\item Up to isomorphism, there are 15 functions fields $\FF/\F_q$, 7 of them being hyperelliptic, such that $h=2$ and $g \geq 2$.They are obtained for $\FF = \mathbb{F}_2(x,y)$ with:
			\newpage
			\begin{table} 
				\begin{tabular}{|c|c|p{9cm}|*{4}{c|}}
					\hline
					$q$ & $g$ & Equation & $B_1$  & $B_2$ & $B_3$ & $B_4$ \tabularnewline
					\hline
					3 & 2 & $y^2-2(x^2+1)(x^4+2x^3+x+1)=0 $ & 1 & 5 & &  \tabularnewline
					\hline
					2 & 2 & $y^2+y+(x^3+x+1)/(x^2+x+1)=0$ & 1 & 3 & & \tabularnewline
					&    & $y^2+y+(x^4+x+1)/x=0$ & 2 & 1 & & \tabularnewline
					\hline
					2 & 3 & $y^2+y+(x^4+x^3+x^2+x+1)/(x^3+x+1)=0$ & 1 & 2 & 1 & \tabularnewline
					&    & $y^2+y+(x^5+x^2+1)/(x^2+x+1)=0$ & 1 & 3 & 0 & \tabularnewline
					&    & $y^2+y+(x^6+x+1)/(x^2+x+1)^3=0$ & 0 & 4 & 2 & \tabularnewline
					&    & $y^2+y+(x^4+x^3+1)/(x^4+x+1)=0$ & 0 & 3 & 2 & \tabularnewline
					&    & $y^4+xy^3+(x+1)y+(x^4+x^2+1)=0$ & 0 & 2 & 2 & \tabularnewline
					&    & $y^3+(x^2+x+1)y+(x^4+x^3+1)=0$ & 1 & 0 & 3 & \tabularnewline
					&    & $y^3+y+(x^4+x^3+1)=0$ & 1 & 1 & 2 & \tabularnewline
					\hline
					2 & 4 & $y^3+(x^4+x^3+1)y+(x^6+x^3+1)=0$ & 0 & 0 & 4 & 2 \tabularnewline
					&   & $y^3+(x^4+x^2+1)y+(x^6+x^5+1)=0$ & 0 & 0 & 4 & 2 \tabularnewline
					&   & $y^3+(x^4+x^3+1)y+(x^6+x+1)=0$ & 0 & 1 & 3 & 3 \tabularnewline
					&   & $y^6+xy^5+(x^2+1)y^4+(x^3+x^2)y^3+(x^6+x^5+x^3+x+1)=0$ & 0 & 1 & 1 & 3 \tabularnewline
					&   & $y^6+xy^5+x^3y^3+y^2+(x^5+x^2)y+(x^6+x^2+1)=0$ & 0 & 1 & 2 & 3 \tabularnewline
					\hline
				\end{tabular}
			\end{table}
		\end{itemize}
	\end{proposition}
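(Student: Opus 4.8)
The plan is to treat this as a finiteness-and-enumeration problem governed by the numerator $L(t)=\prod_{i=1}^{g}(1-\alpha_i t)(1-\overline{\alpha_i}t)$ of the zeta function, using the identity $h=L(1)=\prod_{i=1}^{g}(1-\alpha_i)(1-\overline{\alpha_i})=\prod_{i=1}^{g}|1-\alpha_i|^2$ together with $|\alpha_i|=\sqrt q$. First I would bound $q$. Since $|1-\alpha_i|^2=1-2\Re(\alpha_i)+q$ and $\Re(\alpha_i)\in[-\sqrt q,\sqrt q]$, each factor lies in $[(\sqrt q-1)^2,(\sqrt q+1)^2]$, whence
\[
(\sqrt q-1)^{2g}\le h\le(\sqrt q+1)^{2g}.
\]
For $h\in\{1,2\}$ and $g\ge 2$ the left-hand inequality already forces $q\le 4$: for $q\ge 5$ one has $(\sqrt q-1)^2>\sqrt 2$, so $(\sqrt q-1)^{2g}>2\ge h$, a contradiction.

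Next I would bound $g$. The product formula alone cannot do this for $q\in\{2,3,4\}$ (its lower bound decreases in $g$), so I would invoke a lower bound for the class number that grows like $q^{g-1}$ — the bounds of Lachaud and Martin-Deschamps, of the shape $h\ge c_q\,q^{g-1}/(g+1)$. Combined with $h\le 2$ this confines $(q,g)$ to a short explicit list: it rules out $q=4$ for $g\ge 2$ entirely (forcing $g\le 1$ there) and leaves, for each of $h=1$ and $h=2$, only finitely many pairs with $q\in\{2,3\}$ and small $g$. As an independent check for $q=4$, equality in the product lower bound would require every $\alpha_i=\sqrt q$, hence $B_1=q+1-\sum_i(\alpha_i+\overline{\alpha_i})=5-4g<0$ for $g\ge 2$, which is impossible.

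For each surviving pair $(q,g)$ I would enumerate the admissible $L$-polynomials. Writing $L(t)$ through its integer coefficients and imposing (i) the functional equation, (ii) $L(1)=h$, and (iii) the Weil positivity conditions $B_k=\#\bP_k(\FF/\F_q)\ge 0$ for all $k$ (equivalently, nonnegativity and integrality of the $A_n$ via the relations of \cite{nixi} and Newton's identities) leaves only finitely many candidate zeta functions. This is a purely finite, mechanical search.

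The genuinely hard step — and the main obstacle — is \emph{realization and uniqueness}: deciding which admissible zeta functions are actually attained by a function field, counting the $\F_q$-isomorphism classes realizing each, and exhibiting an explicit model. Bounding and listing candidate $L$-polynomials is routine, but proving that a given candidate occurs (and with which multiplicity up to isomorphism) requires either an explicit curve construction or a Honda--Tate / Tate-module style argument, and it is precisely here that the spurious large-$g$ candidates are discarded. The resulting tables, together with the defining equations over $\F_2$ (and the one over $\F_3$) and the count of hyperelliptic cases, are exactly those furnished by the classification of function fields of class number one (Leitzel--Madan--Queen) and of class number two, which I would cite for the explicit models rather than reconstruct in full.
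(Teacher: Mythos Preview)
Your proposal is essentially sound as a roadmap, but it is worth noting that the paper does not give an independent proof of this proposition at all: its entire proof consists of citations to \cite{lemaqu} and \cite{maqu} for the $h=1$ case and to \cite{lebri} for the $h=2$ case. What you have written is a reasonable sketch of the strategy underlying those classification papers (bound $q$ via the factorization $h=\prod_i|1-\alpha_i|^2$, bound $g$ via a class-number lower bound, enumerate admissible $L$-polynomials, then realize/exclude), and you correctly identify the realization-and-uniqueness step as the substantive obstacle and defer to the same literature there. So in the end your argument and the paper's converge on citing Leitzel--Madan--Queen and Le~Brigand; the paper simply does this in one line, while you reconstruct more of the scaffolding.

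One small caution: your ``independent check'' ruling out $q=4$ only treats $h=1$ (where the product lower bound $(\sqrt q-1)^{2g}=1$ forces equality in each factor). For $h=2$ and $q=4$ the product bound gives only $1\le 2$, so you genuinely need the Lachaud--Martin-Deschamps-type inequality (or an equivalent) to exclude that case; be explicit about which bound you invoke and verify it numerically for the small $(q,g)$ you need.
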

	
	\begin{proof}
		See \cite{lemaqu} and \cite{maqu} for the solutions of the $(h=1)$ problem and \cite[Proposition 3.1 and Theorem 4.1]{lebri} for the solutions of the $(h=2)$ problem.
	\end{proof}

	\begin{proposition} \label{DeggDiv}
		An algebraic function field $\FF/\F_q$ of genus $g \geq 2$ has an effective non-special divisor of degree $g$ in the following cases: 	
		\begin{enumerate}
			\item[\rm(i)] $q \geq 3$.
			\item[\rm(ii)] $q=2$ and $g=2$, unless $\FF = \mathbb{F}_2(x,y)$ with:\\
			
			$$ y^2+y+(x^5+x^3+1)=0$$ and $$ y^2+y+(x^3+x^2+1)/(x^3+x+1)=0$$
			
			\item[\rm(iii)] $q=2$ and $g=3$.
			\item[\rm(iv)] $q=2$, $g \geq 4$ and $B_1(\FF/\F_q) \geq 3$.
		\end{enumerate}
	\end{proposition}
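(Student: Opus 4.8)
My plan is to reduce every case to the single inequality $A_{g-2}<h$ and then read it off from Proposition \ref{prop5}. Assertion (3) there gives $\mathcal E_g$ as soon as $A_{g-2}<h$, while assertion (4) says that for $g\in\{2,3\}$ the field fails to carry an effective non-special divisor of degree $g$ exactly when $A_{g-2}=h$. I would also keep in play the identity $A_g=h+qA_{g-2}$ from the proof of Proposition \ref{prop5}, so that $A_{g-2}<h$ is literally the same as $A_g<(q+1)h$, together with the Riemann--Roch duality $[D]\mapsto[\kappa-D]$, which carries the $h$ classes of degree $g-2$ bijectively onto those of degree $g$ with $\dim(\kappa-D)=\dim(D)+1$; hence $\mathcal E_g$ fails precisely when every class of degree $g-2$ is effective, in which case $A_{g-2}\ge h$. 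Thus the whole proposition amounts to deciding, in each regime, whether $A_{g-2}<h$ or $A_{g-2}=h$.

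\textbf{The cases $g=2$ and $g=3$.} For $g=2$ we have $A_{g-2}=A_0=1$, so by assertion (4) the field fails $\mathcal E_2$ if and only if $h=1$. Proposition \ref{tab} lists exactly the genus-$2$ fields with $h=1$: they are defined over $\F_2$ and are the two equations excluded in (ii); and since no field of genus $\ge 2$ has $h=1$ when $q\ge 3$, we get $h\ge 2>A_0$, settling the genus-$2$ instance of (i). For $g=3$ we have $A_{g-2}=A_1=B_1$, and because distinct rational places lie in distinct classes (as $g\ge 1$) we always have $B_1\le h$; by assertion (4), statement (iii) is therefore equivalent to the strict inequality $B_1<h$ for every genus-$3$ field over $\F_2$, i.e.\ to the non-existence of a curve all of whose degree-one classes are effective. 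I would verify this from Proposition \ref{tab} when $h\le 2$ and from the Hasse--Weil (or Oesterlé) bound on $B_1$ measured against a lower bound for $h$ otherwise.

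\textbf{The quantitative cases.} When $B_1\ge g$ the divisor is produced directly by Proposition \ref{prop4}(1), so I may assume $B_1<g$ and aim straight at $A_{g-2}<h$. Writing $A_{g-2}=\sum_{i=0}^{g-2}a_i\,(q^{g-1-i}-1)/(q-1)$ in terms of the coefficients $a_i$ of the zeta numerator $L(t)$ and using $h=L(1)$, I would bound the $a_i$ by the Hasse--Weil estimate $|a_i|\le\binom{2g}{i}q^{i/2}$ against a lower bound such as $h\ge(\sqrt q-1)^{2g}$; since each unit of degree carries a factor $q$, this margin yields $A_{g-2}<h$ once $q\ge 3$, which is (i). For (iv) the field is over $\F_2$, where the hypothesis $B_1\ge 3$ is exactly the normalisation $a_1=B_1-3\ge 0$ on the first coefficient; combining this and $g\ge 4$ with the counting lemma $A_n\ge B_1A_{n-1}-\binom{B_1}{2}A_{n-2}$ and the relation $A_g=h+2A_{g-2}$, I would again force $A_{g-2}<h$, these being precisely the thresholds at which the estimate closes.

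\textbf{Main obstacle.} The crux is exactly this inequality $A_{g-2}<h$ in the borderline ranges, and its difficulty is structural: every purely formal rearrangement of $A_{g-2}<h$ merely reproduces $A_g<(q+1)h$, so the relations among the $A_n$ and $h$ are self-referential and no combinatorial identity can settle the matter on its own. Genuine progress requires sharp external estimates on both the effective-divisor count $A_{g-2}$ and the class number $h$ through the coefficients of $L(t)$; the thresholds $q\ge 3$, $g\ge 4$ and $B_1\ge 3$ are the points at which these estimates tip in our favour, and the finitely many fields of small $h$ isolated in Proposition \ref{tab} must be inspected individually to pin down the genuine exceptions recorded in (ii).
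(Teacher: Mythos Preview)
Your overall reduction to the inequality $A_{g-2}<h$ via Proposition~\ref{prop5}(3),(4) is exactly the paper's framework, and your treatment of $g=2$ (where $A_{g-2}=A_0=1$, so failure means $h=1$ and Proposition~\ref{tab} finishes) is correct and matches the paper.

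The genuine gap is in your ``quantitative cases''. The bounds you propose---$|a_i|\le\binom{2g}{i}q^{i/2}$ for the numerator coefficients and $h\ge(\sqrt q-1)^{2g}$ for the class number---are far too weak to yield $A_{g-2}<h$. For $q=3$ the lower bound $(\sqrt3-1)^{2g}$ is already below $1$, hence vacuous, while the coefficient bound makes $A_{g-2}$ grow roughly like $q^{g-1}(1+q^{-1/2})^{2g}/(q-1)$; no refinement of these two separate estimates will close the gap, because using $|\alpha_i|=\sqrt q$ once for each coefficient and once again for $h$ pushes the two sides in opposite directions by a factor $\bigl((\sqrt q+1)/(\sqrt q-1)\bigr)^{2g}$. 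The paper avoids this by a single sharper use of the Riemann hypothesis: substituting $t=q^{-1/2}$ into the zeta identity and invoking $L(q^{-1/2})=\prod_i|1-\alpha_iq^{-1/2}|^2\ge 0$ gives directly
\[
2\sum_{n=0}^{g-2}q^{(g-1-n)/2}A_n+A_{g-1}\ \le\ \frac{h}{(\sqrt q-1)^2},
\]
from which $A_{g-2}\ge h$ would force $2\sqrt q\le(\sqrt q-1)^{-2}$, impossible for $q\ge3$. This single inequality is also what drives case~(iv): combined with the counting lemma $A_{g-1}\ge 3A_{g-2}-3A_{g-3}$ (your $B_1\ge3$ hypothesis) it yields $A_{g-3}+(3+2\sqrt2)A_{g-2}\le(3+2\sqrt2)h$, and $A_{g-3}\ge1$ then forces $A_{g-2}<h$. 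Without this substitution-at-$q^{-1/2}$ inequality your argument for (i) and (iv) does not close; once you have it, case~(iii) follows as you say, since $\mathcal E_3$ can fail only when $h=B_1\le2$, and Proposition~\ref{tab} shows no genus-$3$ field over $\F_2$ with $h\le2$ has $B_1=h$.
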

	
	\begin{proof}
		We set $L(t):=L(\FF/\F_q,t)$. for $g \geq 2$, it follows that (see \cite[Lemma 3]{nixi}): 
		$$ \sum_{n=0}^{g-2} A_n t^n + \sum_{n=0}^{g-1} q^{g-1-n} A_n t^{2g-2-n} = \frac{L(t) -ht^g}{(1-t)(1-qt)}.$$
		Substituting $t=q^{-1/2}$ into the last identity, we obtain:
		$$ 2 \sum_{n=0}^{g-2} q^{-n/2} A_n + q^{-(g-1)/2} A_{g-1} = \frac{h-q^{g/2} L(q^{-1/2})}{(q^{1/2}-1)^2 q^{(g-1)/2}}. $$
		and since $L(q^{-1/2})= \prod_{i=1}^{g} |1-\pi_i q^{-1/2}|^2 \geq 0$, we have: 
		\begin{equation}
			2 \sum_{n=0}^{g-2} q^{(g-1-n)/2} A_n + A_{g-1} \leq \frac{h}{(q^{1/2} - 1)^2}. \label{a}
		\end{equation}
		\begin{enumerate}
			\item $q \geq 3$. Using \eqref{a}, $A_{g-2} \geq h$ implies that:
			$$ 2 q^{1/2} \leq \frac{1}{(q^{1/2}-1)^2},$$
			which is absurd if $q \geq 3$. Thus, $A_{g-2} < h$ is always satisfied and so $\mathcal{E}_{g}$ is true.
			\item If $q=2$ and $g \geq 3$, \eqref{a} implies:
			\begin{equation}
				4 A_{g-3} + 2 \sqrt{2} A_{g-2} + A_{g-1} \leq \frac{h}{(\sqrt{2}-1)^2}= (3+2\sqrt{2})h. \label{b}
			\end{equation}
			Assume that $B_1(\FF/\F_q) \geq m = 3$; then, by \eqref{c} with $n=g-1$, we have $A_{g-1} + 3 A_{g-3} \geq 3 A_{g-2}$, and finally using \eqref{b}:
			$$ A_{g-3} + (3+2 \sqrt{2}) A_{g-2} \leq (3+2\sqrt{2})h.$$
			Since $A_{g-3} \geq 1$, because if $g=3$, $A_{g-3}=A_0=1$ and if $g>3$, $A_{g-3} \geq B_1(\FF/\F_q) \geq m=3$, we deduce that, if $B_1(\FF/\F_q) \geq 3$ and $g \geq 3$, then $A_{g-2} < h$ and so $\mathcal{E}_{g}$ is true.
			\item If $q=2$ and $g=2$, using assertion (4) of Proposition \ref{prop5}. In fact, $\mathcal{E}_{g}$ is untrue if and only if $h=A_{g-2}=A_1=B_1(\FF/\F_q)$. Since $\mathcal{E}_{g}$ is true if $B_1(\FF/\F_q) \geq 3$, we are left with $h=B_1(\FF/\F_q)=1$ or $2$ and we deduce from Proposition \ref{tab} that there is no solution.
			\item If $q=2$ and $g=2$, using assertion (4) of Proposition \ref{prop5}, $\mathcal{E}_{g}$ is untrue if and only if $h=A_{g-2}=A_0=1$. By Proposition \ref{tab} there are only two function fields $\FF/\F_q$ of genus $2$ such that $h=1$. They are such that $q=2$ and $\FF =\F_2(x,y)$, with
			\begin{itemize}
				\item $y^2+y+(x^5+x^3+1)=0$ and $B_1(\FF/\F_q)=1$, $B_2(\FF/\F_q)=2$.
				\item $y^2+y+(x^3+x^2+1)/(x^3+x+1)=0$ and $B_1(\FF/\F_q)=0$, $B_2(\FF/\F_q)=3$.
			\end{itemize}
			Since $h=1$, all divisors of a given degree $d>0$ are equivalent. In particular, all the divisors of degree $g=2$ are equivalent to any divisor of ${\div}_2^+(\FF/\F_q)$, and therefor they are special.
		\end{enumerate}
	\end{proof}

	The following lemma from H. Niederreiter and C. Xing in \cite[Lemma 6]{nixi} is another characterization of the existence of these divisors. This result is less precise than Proposition \ref{DeggDiv} and the proof is based on the same tools.
	
	\begin{lemma} \label{xing}
		There exists an effective divisor $D$ of $\FF/\F_q$ with $deg(D)=g$ and $dim(D)=1$ if either $B_1(\FF/\F_q) \geq 2$ and $q \geq 3$, or $B_1(\FF/\F_q) \geq 4$ and $q=2$.
	\end{lemma}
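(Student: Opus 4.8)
The plan is to translate the statement into the language already set up: by the Riemann--Roch relation \eqref{RR}, an effective divisor $D$ of degree $g$ satisfies $\dim(D)=\deg(D)-g+1+i(D)=1+i(D)$, so $\dim(D)=1$ is equivalent to $i(D)=0$. Thus an effective $D$ with $\deg(D)=g$ and $\dim(D)=1$ is precisely an effective non-special divisor of degree $g$, i.e. a witness that $\mathcal{E}_g$ holds. Since the lemma is announced as ``less precise than Proposition \ref{DeggDiv} and based on the same tools'', the strategy is to deduce it from Proposition \ref{DeggDiv} (equivalently, from the inequalities \eqref{a}, \eqref{b}, \eqref{c} and Proposition \ref{prop5}), checking that each of the two hypotheses lands us in a case already settled there.

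First I would dispose of the small genera directly. For $g=0$ the zero divisor has $\dim(0)=1$, and for $g=1$ any rational place $P$ has $\deg(P)=1$ and $\dim(P)=1$ (because $\deg(\kappa-P)=-1<0$ forces $i(P)=0$ in \eqref{RR}); such a $P$ exists since $B_1(\FF/\F_q)\geq 2>0$ in the first regime and $B_1(\FF/\F_q)\geq 4$ in the second. So I may assume $g\geq 2$, and then by Proposition \ref{prop5}(3) it suffices to prove $A_{g-2}<h$. For $q\geq 3$ this is exactly Proposition \ref{DeggDiv}(i): retaining only the $n=g-2$ summand in \eqref{a} gives $2q^{1/2}A_{g-2}\leq h/(q^{1/2}-1)^2$, and since $2q^{1/2}(q^{1/2}-1)^2>1$ for every $q\geq 3$, one gets $A_{g-2}<h$; note the hypothesis $B_1(\FF/\F_q)\geq 2$ is not even needed here, it only serves the $g=1$ case.

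For $q=2$ I would split on $g$. When $g\geq 3$, the hypothesis $B_1(\FF/\F_q)\geq 4$ in particular gives $B_1(\FF/\F_q)\geq 3$, so combining \eqref{b} with \eqref{c} taken with $m=3$ and $n=g-1$ yields $A_{g-3}+(3+2\sqrt{2})A_{g-2}\leq(3+2\sqrt{2})h$, whence $A_{g-2}<h$ because $A_{g-3}\geq 1$ (this is the content of Proposition \ref{DeggDiv}(iii)--(iv)). The one configuration the inequalities cannot reach is $q=2$, $g=2$, where $A_{g-2}=A_0=1$, so \eqref{a} can never force $A_{g-2}<h$; by Proposition \ref{prop5}(4) the property $\mathcal{E}_g$ fails here exactly when $h=1$.

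This last case is the step I expect to require the most care, and I would settle it not by an estimate but by the classification recalled in Proposition \ref{tab}: the only genus-$2$ function fields over $\F_2$ with $h=1$ have $B_1(\FF/\F_q)\leq 1$, so the hypothesis $B_1(\FF/\F_q)\geq 4$ excludes both of them and forces $h\geq 2$, i.e. $A_{g-2}=1<h$. Collecting the four cases gives $A_{g-2}<h$ for all $g\geq 2$ under either hypothesis, and Proposition \ref{prop5}(3) then furnishes the required effective non-special divisor of degree $g$, which has $\dim=1$ as observed at the outset. The mild discrepancy between the clean thresholds used above ($B_1\geq 1$ for $g=1$, $B_1\geq 3$ for $q=2,\ g\geq 3$, and the exclusion of two fields for $q=2,\ g=2$) and the uniform bounds $B_1\geq 2$, $B_1\geq 4$ stated in the lemma is harmless, since the stated hypotheses are in every case at least as strong as what the argument consumes.
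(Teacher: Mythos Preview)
Your argument is correct, and it does land the conclusion by reducing to the machinery already established around Proposition \ref{DeggDiv}. The paper's own proof of Lemma \ref{xing}, however, is self-contained and differs from yours in two places. For $g=2$ the paper does not appeal to the classification in Proposition \ref{tab}; instead it argues directly that if every effective degree-$g$ divisor had $\dim\geq 2$, then $A_2=q+h$ together with $A_2\geq (q+1)h$ forces $h\leq 1$, contradicting $h\geq A_1=B_1\geq 2$ (which holds under either hypothesis). This is lighter than invoking the $h=1$ list. For $q=2$ and $g\geq 3$, the paper actually uses the full strength of $B_1\geq 4$: it applies \eqref{c} with both $m=3$ and $m=4$, obtaining two linear inequalities in $A_{g-3}$ and $A_{g-2}$, and then \emph{eliminates} $A_{g-3}$ to reach $(10+6\sqrt{2})A_{g-2}\leq 3h/(\sqrt{2}-1)^2$, which is a contradiction since $A_{g-2}\geq h$. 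Your route through only $m=3$ and the bound $A_{g-3}\geq 1$ is exactly the argument of Proposition \ref{DeggDiv}(iv) and is arguably cleaner here; it also explains why you only consume $B_1\geq 3$, whereas the paper's elimination step genuinely needs $B_1\geq 4$ to apply \eqref{c} with $m=4$.
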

	
	\begin{proof}
		The lemma is trivial for $g=0$. if $g=1$, let $D$ be a rational place of $\FF/\F_q$, then $dim(D)=1$. Now let $g \geq 2$. Suppose that $dim(D) \geq 2$ for any positive divisor $D$ with $deg(D)=g$. If $g=2$, then by \cite[Lemma 3(i)]{nixi} we have $A_2=q+h$ and by \cite[Lemma 5]{nixi} we have $A_2 \geq (q+1)h$. Thus $h \leq 1$, which contradicts $h \geq A_1 \geq 2$. \\
		So we may assume $g \geq 3$. Substituting $t=q^{-1/2}$ in the identity in \cite[Lemma 3(ii)]{nixi}, we obtain 
		$$ 2 \sum_{n=0}^{g-2} q^{-n/2} A_n + q^{-(g-1)/2} A_{g-1}= \frac{h-q^{g/2}L(q^{-1/2})}{(q^{1/2}-1)^2 q^{(g-1)/2}}. $$
		Since
		$$ L(q^{-1/2})= \prod_{j=1}^{g} |1-\alpha_j q^{-1/2}|^2 \geq 0,$$
		We infer that 
		\begin{equation}
			2 \sum_{n=0}^{g-2}q^{(g-1-n)/2}A_n + A_{g-1} \leq \frac{h}{(q^{1/2}-1)^2}. \label{FuIn}
		\end{equation}
		\cite[Lemma 3(i)]{nixi} yields $A_g=h+qA_{g-2}$ and \cite[Lemma 5]{nixi} yields $A_g \geq (q+1)h$, and thus $A_{g-2} \geq h$. From \eqref{FuIn} we then get
		$$ 2 q^{1/2} \leq \frac{1}{(q^{1/2}-1)^2}$$
		This inequality is impossible if $q \geq 3$, hence it remains to prove the lemma for $q=2$.\\
		If $q=2$ and $B_1(\FF/\F_q) \geq 4$, then from \eqref{FuIn} we obtain 
		\begin{equation}
			4 A_{g-3} + 2 \sqrt{2} A_{g-2} + A_{g-1} \leq \frac{h}{(\sqrt{2}-1)^2}. \label{FuIn2}
		\end{equation}
		Together with \cite[Lemma 4]{nixi} with $m=3$ and $n=g-1$ this yields 
		\begin{equation}
			A_{g-3} + (3+2 \sqrt{2}) A_{g-2} \leq \frac{h}{(\sqrt{2}-1)^2}. \label{FuIn3}
		\end{equation}
		if we use \cite[Lemma 4]{nixi} with $m=4$ and $n=g-1$ in \eqref{FuIn2}, then we get
		\begin{equation}
			-2 A_{g-3} + (4+2 \sqrt{2}) A_{g-2} \leq \frac{h}{(\sqrt{2}-1)^2}. \label{FuIn4}
		\end{equation}
		By eliminating $A_{g-3}$ from \eqref{FuIn3} and \eqref{FuIn4}, we arrive at
		$$ (10+6 \sqrt{2}) A_{g-2} \leq \frac{3h}{(\sqrt{2}-1)^2}, $$
		and therefore  
		$$ 10+6 \sqrt{2} \leq \frac{3}{(\sqrt{2}-1)^2}, $$
		which is absurd.
		
	\end{proof}
	
	\subsection{Existence of non-special divisors of degree $g-1$} \label{g=1}
	
	In this section, we are interested  by the non-special divisors of degree $g-1$. We begin with the particular case where $g=1$.
	If the genus of $F/\mathbb{F}_q$ is $g = 1$, any divisor of degree $d = g$ is non-special since $d \geq 2g - 1 = 1$ and 
	there exists a non-special divisor of degree $g - 1 = 0$ if and only if the divisor class number $h$ is $> 1$, i.e. $B_1(\FF/\F_q) \geq 2$. 
	So there are exactly 3 function fields of genus 1 which have no non-special divisor of degree $g - 1$. 
	They are the elliptic solutions to the divisor class number one problem (see \cite{mac} and \cite{maqu}):
		
		\begin{align*}
			q &= 2, & y^2 + y + (x^3 + x + 1) &= 0, \\
			q &= 3, & y^2 - (x^3 + 2x + 2) &= 0, \\
			q &= 4, & y^2 + y + (x^3 + a) &= 0, & \text{where } \mathbb{F}_4 = \mathbb{F}_2(a).
		\end{align*}
		
		So, in the rest of this paper, except otherwise stated, we assume that the genus of a function field is $\geq 2$.

	\begin{theorem} \label{g=2}
		Let $\FF/\F_q$ be a function field of genus $g \geq 2$. Then $\mathcal{E}_{g-1}$ is true in the following cases:
		\begin{enumerate}
			\item[\rm(i)] $q \geq 4$.
			\item[\rm(ii)] $g=2$, unless $\FF/\F_q := \F_2(x,y)/\F_2$, with:
			$$ y^2+y+(x^5+x^3+1)=0$$ or $$ y^2+y+(x^4+x+1)/x=0$$
		\end{enumerate}
	\end{theorem}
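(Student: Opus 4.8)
The plan is to base everything on the sufficient condition of Proposition~\ref{prop5}(2): $\mathcal{E}_{g-1}$ holds whenever $A_{g-1}<h$. For the genus-two part I will also exploit that effective divisors of degree $1$ are exactly the rational places and that two distinct rational places are never equivalent when $g\geq 1$, so that $A_{g-1}=A_1=B_1\leq h$. Combining this with Proposition~\ref{prop5}(2) and Corollary~\ref{cor6} I would establish the clean dichotomy: for $g=2$, $\mathcal{E}_1$ \emph{fails if and only if} $B_1=h$. Thus the theorem reduces to proving $A_{g-1}<h$ in case (i) and to determining the genus-two fields with $B_1=h$ in case (ii).

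For (i) I would insert into the fundamental inequality \eqref{a} (established in the proof of Proposition~\ref{DeggDiv}) only its $n=0$ term, using $A_0=1$ and discarding the other nonnegative summands, to obtain $A_{g-1}\leq \frac{h}{(q^{1/2}-1)^2}-2q^{(g-1)/2}$. When $q\geq 4$ one has $(q^{1/2}-1)^2\geq 1$, whence $A_{g-1}\leq h-2q^{(g-1)/2}<h$, and Proposition~\ref{prop5}(2) finishes the case. The value $q=4$ is the tight one (there $\frac{h}{(q^{1/2}-1)^2}=h$ and the subtracted term is exactly what yields the strict inequality), whereas for $q\geq 5$ the factor $\frac{1}{(q^{1/2}-1)^2}$ is already $<1$.

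For (ii) only $q\in\{2,3\}$ survive, since case (i) already excludes $B_1=h$ for $q\geq 4$; equivalently, feeding $B_1=h$ into \eqref{a} forces $(q^{1/2}-1)^2<1$. I would write $L(t)=(1-s_1t+qt^2)(1-s_2t+qt^2)$ with $s_i=2\sqrt q\cos\theta_i\in[-2\sqrt q,2\sqrt q]$, so that $B_1=q+1-(s_1+s_2)$ and $h=L(1)=(q+1-s_1)(q+1-s_2)$; the relation $B_1=h$ then collapses to $(s_1-q)(s_2-q)=-q$. For $q=3$ a short analysis of this equation inside the admissible interval leaves only $s_1=2\sqrt 3$, $s_2=-2\sqrt 3$, i.e. $L(t)=1-6t^2+9t^4$; but then $N_2=q^2+1+4q-(s_1^2+s_2^2)=-2<0$, so no curve has this zeta function and no genus-two field over $\F_3$ satisfies $B_1=h$. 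For $q=2$, integrality of $a_1,a_2$ turns the equation into $s_1s_2=2(s_1+s_2)-6$ with $s_1+s_2\in\Z$; the interval constraint together with the positivity $B_2\geq 0$ (which discards the spurious candidate $s_1+s_2=0$) leaves exactly $s_1+s_2\in\{1,2\}$, giving $(B_1,B_2,h)=(2,1,2)$ and $(1,2,1)$, which by Proposition~\ref{tab} are realized precisely by $y^2+y+(x^4+x+1)/x=0$ and $y^2+y+(x^5+x^3+1)=0$.

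The main obstacle is case (ii): inequality \eqref{a} alone does not single out the exceptional fields, so I must descend to the exact form of the zeta function and juggle three constraints at once, namely the integrality of the coefficients of $L$, the Weil localization $|s_i|\leq 2\sqrt q$ of the roots, and above all the positivity $N_2\geq 0$ (equivalently $B_2\geq 0$), which is what kills both the spurious $q=3$ candidate and the spurious $q=2$ candidate $s_1+s_2=0$. Identifying the two surviving function fields by name then rests on the explicit classification recalled in Proposition~\ref{tab}.
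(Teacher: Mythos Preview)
Your argument for (i) coincides with the paper's: both extract the $n=0$ term from inequality~\eqref{a} to obtain $A_{g-1}<h$ whenever $q\geq 4$, and then invoke Proposition~\ref{prop5}(2).

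For (ii) you take a genuinely different path. The paper first uses Proposition~\ref{prop4} (namely $B_1\geq g+1=3\Rightarrow\mathcal{E}_{g-1}$) together with the observation $B_1\leq h$ to reduce at once to $B_1=h\in\{1,2\}$, and then reads off the two exceptional fields from the classification of class-number-$\leq 2$ function fields in Proposition~\ref{tab}. You instead bypass Proposition~\ref{prop4} entirely: after establishing the clean dichotomy ``$\mathcal{E}_1$ fails $\Leftrightarrow B_1=h$'' via Proposition~\ref{prop5}(2) and Corollary~\ref{cor6}, you parametrize the genus-$2$ $L$-polynomial as $(1-s_1t+qt^2)(1-s_2t+qt^2)$, rewrite $B_1=h$ as $(s_1-q)(s_2-q)=-q$, and solve this directly under the Weil constraint $|s_i|\leq 2\sqrt q$ together with the positivity $N_2\geq 0$. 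This rules out $q=3$ by an explicit $N_2=-2$ and pins $q=2$ down to $s_1+s_2\in\{1,2\}$, i.e.\ $(B_1,B_2,h)\in\{(1,2,1),(2,1,2)\}$. Both routes ultimately appeal to Proposition~\ref{tab} to name the two curves, but yours is more self-contained in that it does not rely on the existence result of Proposition~\ref{prop4} and exhibits concretely why $q=3$ contributes nothing; the paper's route is shorter because Proposition~\ref{prop4} absorbs the Weil-bound and positivity bookkeeping in a single stroke.
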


	\begin{proof}
		Recall that, $A_{g-1}=0$, the existence is clear.
		\begin{enumerate}
			\item $q \geq 4$. By \eqref{a}, for $g\geq 2$ we have:
			$$ A_{g-1} < 2q^{(g-1)/2}A_0+A_{g-1} \leq 2 \sum_{n=0}^{g-2}q^{(g-1-n)/2}A_n+A_{g-1} \leq \frac{h}{(q^{1/2}-1)^2}. $$
			Thus, if $q \geq 4$, we have $A_{g-1}<h$ and the result follows from Proposition \ref{prop5}.
			\item $g=2$. If $A_{g-1}=B_1(\FF/\F_q)<h$, the result follows from Proposition \ref{prop5}. This is the case when $B_1(\FF/\F_q)=0$ and then all divisors of degree $g-1$ are non-special. If $B_1(\FF/\F_q) \geq g+1 = 3$, the result is true by Proposition \ref{prop4}. The remaining cases are $B_1(\FF/\F_q)=1$ or $2$ with $h=B_1(\FF/\F_q)$. By Proposition \ref{tab}, there are two solutions:
			\begin{enumerate}
				\item $B_1(\FF/\F_q)=1$ and $h=1$. There is a unique function field satisfying these conditions. It is $\FF/\F_q := \F_2(x,y)/\F_2$, with: 
				$$ y^2+y+(x^5+x^3+1)=0$$
				Since $h=1$, all divisors of degree $g-1=1$ are equivalent to the place of degree $1$; thus, they are special.
				\item $B_1(\FF/\F_q)=2$ and $h=2$. There is a unique function field satisfying these conditions. It is $\FF/\F_q := \F_2(x,y)/\F_2$, with:
				$$ y^2+y+(x^4+x+1)/x=0$$
				Since the two degree one places are non-equivalent, it follows from Corollary \ref{cor6} that $\mathcal{E}_{g-1}$ is untrue.
			\end{enumerate}
		\end{enumerate}
	\end{proof}

	In the following lemma, the value of $A_{g-1}$ is given in terms of coefficients of the polynomial $L(\FF/\F_q, t)$.
	
	\begin{lemma} \label{Ag-1}
		Let $\FF/\F_q$ be a function field of genus $g$ and let $L(t) = \sum_{i=0}^{2g} a_i t_i$ be the numerator of its Zeta function. Then
		
		$$ A_{g-1} = \frac{1}{q-1} (h- ( a_g + 2 \sum_{i=0}^{g-1} a_i))$$
	\end{lemma}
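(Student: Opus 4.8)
The plan is to work directly with the Zeta function $Z(t) = L(t)/\big((1-t)(1-qt)\big)$, whose power series expansion has the numbers $A_n$ as coefficients, that is $Z(t) = \sum_{n\geq 0} A_n t^n$. First I would expand the denominator by partial fractions, using
$$\frac{1}{(1-t)(1-qt)} = \frac{1}{q-1}\left(\frac{q}{1-qt} - \frac{1}{1-t}\right) = \sum_{n\geq 0}\frac{q^{n+1}-1}{q-1}\,t^n.$$
Multiplying by $L(t) = \sum_{i=0}^{2g} a_i t^i$ and reading off the coefficient of $t^{g-1}$ — where only the terms $a_i$ with $0 \leq i \leq g-1$ can contribute — yields
$$A_{g-1} = \frac{1}{q-1}\sum_{i=0}^{g-1} a_i\,(q^{g-i} - 1) = \frac{1}{q-1}\left(\sum_{i=0}^{g-1} a_i q^{g-i} - \sum_{i=0}^{g-1} a_i\right).$$

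The second step is to convert the weighted sum $\sum_{i=0}^{g-1} a_i q^{g-i}$ into an unweighted tail of the coefficients, using the functional equation of the Zeta function. From $L(t) = q^g t^{2g} L\big(1/(qt)\big)$ one obtains the coefficient symmetry $a_{2g-i} = q^{g-i} a_i$ for $0 \leq i \leq g$. Substituting $a_i q^{g-i} = a_{2g-i}$ and reindexing by $j = 2g-i$ gives $\sum_{i=0}^{g-1} a_i q^{g-i} = \sum_{j=g+1}^{2g} a_j$.

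Finally, since $h = L(1) = \sum_{i=0}^{2g} a_i$, the tail satisfies $\sum_{j=g+1}^{2g} a_j = h - a_g - \sum_{i=0}^{g-1} a_i$. Substituting this into the displayed expression for $A_{g-1}$ collapses the two occurrences of $\sum_{i=0}^{g-1} a_i$ into the factor $2$ and isolates the middle coefficient $a_g$, delivering the claimed formula.

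The computation is essentially bookkeeping, and the denominator $q-1$ is harmless since $q\geq 2$. The only genuinely substantive input is the coefficient symmetry $a_{2g-i} = q^{g-i} a_i$ coming from the functional equation, so I expect the main point of care to be verifying that identity and handling the summation boundaries correctly — in particular, noting that the central coefficient $a_g$ is counted exactly once (it corresponds to $i=g$, which lies outside both the range $0\leq i\leq g-1$ and the tail $g+1\leq j\leq 2g$).
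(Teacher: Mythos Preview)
Your proof is correct and follows essentially the same approach as the paper: both extract the coefficient formula $(q-1)A_{g-1}=\sum_{i=0}^{g-1}(q^{g-i}-1)a_i$ from the Zeta function expansion, then apply the functional-equation symmetry $a_{2g-i}=q^{g-i}a_i$ together with $h=L(1)=\sum_{i=0}^{2g}a_i$ to reach the stated identity. The only cosmetic difference is that you justify the coefficient formula via an explicit partial-fraction step, whereas the paper simply states it.
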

	
	\begin{proof}
		This is a well-known result
		$$Z(t) = \sum_{m=0}^{+\infty} A_m t^m = \frac{\sum_{i=0}^{2g} a_i t^i}{(1-t)(1-qt)}$$ 
		We deduce that for all $m \geq 0$,
		$$A_m = \sum_{i=0}^{m} \frac{q^{m-i+1}-1}{q-1} a_i $$
		In particular,
		$$(q-1)A_{g-1} = \sum_{i=0}^{g-1} (q^{g-i}-1)a_i$$
		Since $a_i = q^{i-g} a_{2g-i}$, for all $i=0, \ldots , g$, we obtain
		$$(q-1)A_{g-1}= q^g \sum_{i=0}^{g-1} q^{-i}a_i - \sum_{i=0}^{g-1}a_i= q^g \sum_{i=0}^{g-1} q^{-i} q^{i-g} a_{2g-i} - \sum_{i=0}^{g-1}a_i$$
		Hence,
		$$ (q-1)A_{g-1} = \sum_{i=0}^{g-1}(a_{2g-i}-a_i) $$
		Furthermore, we know that $h=L(1)=\sum_{i=0}^{2g} a_i$, therefore,
		$$A_{g-1} = \frac{1}{q-1} \left( h- \left ( a_g+2\sum_{i=0}^{g-1} a_i \right) \right) .$$
	\end{proof}
	
	Using the preceding lemma, Corollary \ref{cor6} and Assertion 2 of Proposition \ref{prop5}, one has 
	
	\begin{corollary} \label{CNS}
	Let $\FF/\F_q$ be a function field of genus $g$ and $L(t) = \sum_{i=0}^{2g} a_i t^i$ be the $L$-polynomial of $\FF/\F_2$. Then
	\begin{itemize}
		\item For $q \geq 3$, $ a_g + 2 \sum_{i=0}^{g-1} a_i \geq 0 $ if and only if $\mathcal{E}_{g-1}$ is true.
		\item For $q = 2$, $ a_g + 2 \sum_{i=0}^{g-1} a_i > 0 $ if and only if $\mathcal{E}_{g-1}$ is true.
	\end{itemize}
	\end{corollary}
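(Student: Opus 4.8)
The plan is to push everything onto the single integer $A_{g-1}$ and then compare it with the class number $h$. Lemma~\ref{Ag-1} rewrites the relevant quantity as
\[ a_g + 2\sum_{i=0}^{g-1} a_i \;=\; h-(q-1)A_{g-1}, \]
so that its sign is controlled entirely by the position of $A_{g-1}$ relative to the threshold $h/(q-1)$: for $q\geq 3$ the condition ``$\geq 0$'' reads $A_{g-1}\leq h/(q-1)$, while for $q=2$ the condition ``$>0$'' reads $A_{g-1}<h$. Thus the corollary becomes a comparison between $A_{g-1}$ and a threshold, and the whole task is to link that comparison to the existence of a non-special divisor of degree $g-1$.

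The easy implication is that the sign condition forces $\mathcal{E}_{g-1}$. For $q=2$ the hypothesis gives $A_{g-1}<h$, and Assertion~2 of Proposition~\ref{prop5} yields $\mathcal{E}_{g-1}$ at once. For $q\geq 3$ the hypothesis gives $A_{g-1}\leq h/(q-1)\leq h/2<h$ (using $h\geq 1$), so again Proposition~\ref{prop5} applies. This half is a one-line consequence of Lemma~\ref{Ag-1} and Proposition~\ref{prop5}, and I would write it down first.

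For the converse I would argue by contraposition through Corollary~\ref{cor6}: if $\mathcal{E}_{g-1}$ fails, then every one of the $h$ classes of degree $g-1$ contains an effective divisor, each contributing at least $\tfrac{q-1}{q-1}=1$ to $A_{g-1}$, whence $A_{g-1}\geq h$ and therefore $a_g+2\sum_{i<g}a_i=h-(q-1)A_{g-1}\leq (2-q)h$, which is $\leq 0$ and is strictly negative when $q\geq 3$. \textbf{The main obstacle is that this argument only reproves the easy implication}: the statement ``$\mathcal{E}_{g-1}$ fails $\Rightarrow$ sign fails'' is exactly the contrapositive of ``sign holds $\Rightarrow\mathcal{E}_{g-1}$''. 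The genuinely missing direction is ``$\mathcal{E}_{g-1}$ holds $\Rightarrow$ sign holds'', i.e. that $A_{g-1}>h/(q-1)$ (resp.\ $A_{g-1}\geq h$ for $q=2$) should force $\mathcal{E}_{g-1}$ to fail, and this does \emph{not} follow from the crude bound $A_{g-1}\geq h$: a single class of large dimension can push $A_{g-1}$ past the threshold while another class stays non-special. Indeed, on a hyperelliptic function field the special linear systems of degree $g-1$ pour many effective divisors into one class, and already for the small hyperelliptic examples of Proposition~\ref{tab} (for instance the genus~$3$, $h=2$ field over $\F_2$) one finds $A_{g-1}\geq h$ \emph{together with} a non-special class of degree $g-1$. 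I would therefore expect the reverse implication to need more than Lemma~\ref{Ag-1}, Corollary~\ref{cor6} and Proposition~\ref{prop5}: either an extra hypothesis bounding the dimensions of the degree-$(g-1)$ classes, or a class-by-class refinement that replaces the inequality $A_{g-1}\geq h$ by the exact count, so that the sharp threshold $h/(q-1)$ is matched rather than the weaker threshold $h$ — and this is precisely the point at which the proof must do real work.
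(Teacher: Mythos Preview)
Your reconstruction of the forward implication (sign condition $\Rightarrow\mathcal{E}_{g-1}$) is exactly what the paper's one-line proof amounts to: Lemma~\ref{Ag-1} converts the sign condition into $A_{g-1}<h$ (or $A_{g-1}\leq h/(q-1)<h$ when $q\geq 3$), and Assertion~2 of Proposition~\ref{prop5} finishes. There is nothing more in the paper's argument than this.

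Your diagnosis of the reverse implication is also correct, and in fact sharper than you state: the direction ``$\mathcal{E}_{g-1}\Rightarrow$ sign condition'' is not merely unproved by the cited tools, it is \emph{false}. Your candidate counterexample works. Take the hyperelliptic field $\FF=\F_2(x,y)$ with $y^2+y=(x^4+x^3+x^2+x+1)/(x^3+x+1)$ from Proposition~\ref{tab}: here $g=3$, $h=2$, $B_1=1$, $B_2=2$, so $A_{g-1}=A_2=3$ and hence
\[
a_g+2\sum_{i=0}^{g-1}a_i \;=\; h-(q-1)A_{g-1}\;=\;2-3\;=\;-1\;<\;0.
\]
On the other hand $\mathcal{E}_{g-1}$ holds: the two degree-$2$ classes must account for exactly $3$ effective divisors, and since each class contributes $0$, $1$, or $3$ effective divisors (Clifford bounds $\dim\leq 2$), the only partition is $3+0$; the class contributing $0$ is non-special of degree $g-1$. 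So the ``if and only if'' in the corollary is in error; only the forward implication survives, and Corollary~\ref{cor6} plays no role in either direction beyond what Proposition~\ref{prop5} already gives.
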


	\begin{example}
		The Hermitian function field $\FF/\F_{q^2}$ is such that $\FF = \F_q(x,y)$ with $y^q +y-x^{q+1}=0$. It is a maximal function field of genus $g=\frac{q(q-1)}{2}$ and it is the constant field extension of $\mathbf{G}/\F_q$, where $\mathbf{G} = \F_q(x,y)$, with $y^q+y-x^{q+1}=0$. We can say that $\mathbf{G}/\F_q$ is a "constant field restriction" of $\FF/\F_{q^2}$. All subfields of the Hermitian function field $\FF/\F_{q^2}$ are maximal function fields.
	\end{example}
	
	\begin{corollary}
		If the algebraic function field $\mathbf{G}/\F_q$ is a constant field restriction of a maximal function field $\FF/\F_{q^2} =  \mathbf{G}.\F_{q^2}/\F_{q^2}$, then $\mathbf{G}/\F_q$ contains a non-special divisor of degree $g-1$. 
	\end{corollary}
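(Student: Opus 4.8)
The plan is to pin down the $L$-polynomial of $\mathbf{G}/\F_q$ completely and then feed it into Corollary \ref{CNS}. First I would recall the characterisation of maximal function fields over $\F_{q^2}$: if $\FF/\F_{q^2}$ of genus $g$ is maximal, its number of rational places meets the Hasse--Weil upper bound $q^2+1+2gq$, which forces every Frobenius eigenvalue to equal $-q$, so that $L(\FF/\F_{q^2},t)=(1+qt)^{2g}$. Next I would use the standard behaviour of Frobenius eigenvalues under a constant field extension (cf. \cite{stic}): writing $\pi_1,\overline{\pi_1},\dots,\pi_g,\overline{\pi_g}$ for the eigenvalues of $\mathbf{G}/\F_q$, with $|\pi_i|=\sqrt q$, the eigenvalues of $\FF=\mathbf{G}\cdot\F_{q^2}$ over $\F_{q^2}$ are exactly the squares $\pi_i^2,\overline{\pi_i}^2$. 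Comparing with the maximality relation above yields $\pi_i^2=-q$ for every $i$.

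The second step is a direct computation of $L(\mathbf{G}/\F_q,t)=\sum_{i=0}^{2g}a_it^i$. From $\pi_i^2=-q$ and $\pi_i\overline{\pi_i}=|\pi_i|^2=q$ one gets $\pi_i+\overline{\pi_i}=0$, so each quadratic factor collapses to $(1-\pi_i t)(1-\overline{\pi_i}t)=1-(\pi_i+\overline{\pi_i})t+\pi_i\overline{\pi_i}t^2=1+qt^2$, and therefore $L(\mathbf{G}/\F_q,t)=(1+qt^2)^g=\sum_{k=0}^{g}\binom{g}{k}q^kt^{2k}$. In particular the coefficients are $a_{2k}=\binom{g}{k}q^k\geq 0$, the odd-index coefficients vanish, and $a_0=1$.

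Finally I would apply Corollary \ref{CNS}. Since every $a_i$ is nonnegative and $a_0=1$, the relevant quantity satisfies $a_g+2\sum_{i=0}^{g-1}a_i\geq 2a_0=2>0$ (the sum contains $a_0$ because $g\geq 2$). Thus the criterion of Corollary \ref{CNS} is met both when $q=2$, where strict positivity is required, and when $q\geq 3$, where nonnegativity suffices, so $\mathcal{E}_{g-1}$ holds and $\mathbf{G}/\F_q$ carries a non-special divisor of degree $g-1$. The only genuinely delicate point is the first step: correctly combining the eigenvalue-squaring rule for the extension $\F_q\subset\F_{q^2}$ with the maximality constraint to obtain $\pi_i^2=-q$; once $L(\mathbf{G}/\F_q,t)=(1+qt^2)^g$ is established, the positivity, and hence the conclusion, are immediate.
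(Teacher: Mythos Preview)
Your proof is correct and follows exactly the route the paper intends: the corollary is stated in the paper without a separate proof precisely because it is a direct consequence of Corollary \ref{CNS} once one identifies $L(\mathbf{G}/\F_q,t)=(1+qt^2)^g$, and you carry out that identification cleanly via the eigenvalue-squaring rule for the constant field extension combined with the maximality condition $\pi_i^2=-q$. The only cosmetic remark is that the inclusion of $a_0$ in $\sum_{i=0}^{g-1}a_i$ already holds for $g\geq 1$, not just $g\geq 2$, but this is irrelevant under the paper's standing hypothesis $g\geq 2$.
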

	
	\subsection{Particular cases : ordinary curves over $\F_2$ and $\F_3$} \label{ex}
	
	The following results treat the particular case of ordinary curves in \cite[Section 4]{bariro}.
	
	Let $\CC/k$ be a genus $g$  (smooth projective absolutely irreducible)  curve over a finite field $k=\F_{p^n}$.  
	Classically, one defines the $p$-rank $\gamma$ of this curve as the integer $0 \leq \gamma \leq g$ such that $\# \Jac(\CC)[p](\overline{k})=p^{\gamma}$. In particular 
	$\CC$ is said to be ordinary if $\gamma=g$.  There is another equivalent characterization in terms of the $L$-polynomial, 
	namely $\gamma=\deg(L(t) \pmod{p})$ (see \cite{mani}). In particular, $\CC$ is ordinary if and only if $p$ does not divide $a_g$. 
	
	\begin{proposition} \label{ordinary}
		Let $\CC$ be an ordinary curve of genus $g>0$ over a finite field $k$ of characteristic $2$.
		There is always a non-special divisor of degree $g-1$ on $\CC$.
	\end{proposition}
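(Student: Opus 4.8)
The plan is to sidestep any analysis of the \emph{sign} of the integer $a_g+2\sum_{i=0}^{g-1}a_i$ and argue instead by a parity comparison, which works uniformly for every even $q=2^{n}$ (so no separate treatment of $q\ge 4$ is needed). By the characterization recalled just above the statement, ordinarity in characteristic $2$ is exactly the condition that $a_g$ be \emph{odd}. By Corollary \ref{cor6} it is enough to show that the number $N$ of divisor classes of degree $g-1$ that contain an effective divisor satisfies $N<h$, i.e. that not all $h$ classes of degree $g-1$ are effective. I would prove this by showing that $N$ and $h$ have opposite parities, hence $N\ne h$, and then use $N\le h$ (there are exactly $h$ classes of degree $g-1$, F.K. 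Schmidt's theorem guaranteeing one is nonempty) to force $N<h$.

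First I would read off the parity of $A_{g-1}$ from Lemma \ref{Ag-1}: since $(q-1)A_{g-1}=h-\bigl(a_g+2\sum_{i=0}^{g-1}a_i\bigr)$ and $q-1$ is odd, reducing modulo $2$ and using that $a_g$ is odd gives $A_{g-1}\equiv h-1\pmod 2$, so that $A_{g-1}\not\equiv h\pmod 2$. Next I would compute the parity of $A_{g-1}$ a second way, through classes: grouping effective divisors by their class and using that a class of degree $g-1$ and dimension $k\ge 1$ contains exactly $\frac{q^{k}-1}{q-1}=1+q+\cdots+q^{k-1}$ effective divisors, one gets $A_{g-1}=\sum_{k\ge 1}\frac{q^{k}-1}{q-1}\,h_{g-1,k}$. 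Because $q$ is even, every coefficient $\frac{q^{k}-1}{q-1}$ is odd, whence $A_{g-1}\equiv\sum_{k\ge 1}h_{g-1,k}=N\pmod 2$. Combining the two computations yields $N\equiv A_{g-1}\not\equiv h\pmod 2$, so $N\ne h$ and therefore $N<h$.

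Finally I would conclude as follows. When $A_{g-1}\ge 1$ the inequality $N<h$ is precisely the situation of Corollary \ref{cor6}, which gives that $\mathcal{E}_{g-1}$ holds, i.e. $\CC$ carries a non-special divisor of degree $g-1$; the degenerate cases $A_{g-1}=0$ (where every divisor of degree $g-1$ is already non-special, since $N=0<h$) and $g=1$ (where the parity identity forces $h$ even, hence $h\ge 2$, hence a non-special divisor of degree $0$) are immediate. I expect the only genuine subtlety to be conceptual rather than computational: ordinarity supplies only that $a_g$ is odd, hence that $a_g+2\sum_{i=0}^{g-1}a_i$ is \emph{nonzero}, which is strictly weaker than the positivity required by Corollary \ref{CNS}. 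The crux of the argument is therefore to transfer the question from the count of effective \emph{divisors} $A_{g-1}$ to the count of effective \emph{classes} $N$: a parity mismatch between $h$ and $N$ excludes equality outright and, since $N\le h$ is automatic, produces the strict inequality $N<h$ with no positivity estimate whatsoever.
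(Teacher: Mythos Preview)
Your argument is correct. Reducing $(q-1)A_{g-1}=h-\bigl(a_g+2\sum_{i<g}a_i\bigr)$ modulo $2$ gives $A_{g-1}\equiv h-1\pmod 2$, and since every $\frac{q^k-1}{q-1}$ is odd in even characteristic one has $A_{g-1}\equiv N\pmod 2$; hence $N\ne h$, so $h_{g-1,0}=h-N>0$. (Going through Corollary~\ref{cor6} is unnecessary: the inequality $h_{g-1,0}>0$ already is the conclusion, and this single line covers the cases $g=1$ and $A_{g-1}=0$ uniformly.)

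This is, however, a genuinely different route from the paper's proof of Proposition~\ref{ordinary}. The paper argues geometrically: in characteristic~$2$ every exact differential $df$ has canonical divisor divisible by $2$, so one can write $(df)=2D_0$ for a rational divisor $D_0$ of degree $g-1$ (the canonical theta characteristic); the space $\mathcal{L}(D_0)$ is then identified with the kernel of the Cartier operator on regular differentials, which vanishes precisely when the curve is ordinary. Your proof, by contrast, is the $p=2$, $\gamma=g$ specialisation of the mod-$p$ counting argument the paper later uses for Proposition~\ref{ordinary2}. The trade-off is exactly the one the paper points out between those two propositions: the theta-characteristic proof produces an \emph{explicit} non-special divisor of degree $g-1$, whereas the parity argument is non-constructive but extends immediately to arbitrary $p$-rank and arbitrary characteristic.
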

	\begin{proof}
		Let $f \in k(\CC)$ such that $df \ne 0$. Developing $f$ in power series at any point of $\CC$, we see that $df$ has only zeros and poles of even multiplicity. 
		Hence there exists a rational divisor of degree $(2g-2)/2=g-1$ such that  $(df)=2 D_0$. It is easy to show that the class of this divisor does not depend on the choice of $f$ 
		and it is called the canonical theta characteristic divisor. In \cite[Prop.3.1]{stvo}, it is shown that there is a bijection between ${\mathcal L}(D_0)$ and the space of  exact regular 
		differentials (i.e. the regular differentials $\omega$ such that $\omega=df$ for $f \in k(\CC)$) . Now by \cite[Prop.8]{serr}, a regular differential $\omega$ is exact if and only 
		if ${\mathcal C}(\omega)=0$ where ${\mathcal C}$ is the Cartier operator.  Moreover by  \cite[Prop.10]{serr}, $\Jac(\CC)$ is ordinary if and only if ${\mathcal C}$ is bijective. 
		So the only exact regular differential is $0$ and $\dim(D_0)=0$. Hence $D_0$ is the divisor we were looking for.
	\end{proof}

	Note, that the previous proof gives a way to explicitly construct a degree $g-1$ divisor of dimension zero.  We will now generalize  Proposition \ref{ordinary} (and Lemma \ref{Ag-1}) but without such an explicit construction.
	
	\begin{lemma} \label{Ag-k}
		Let $\FF/\F_q$ be a function field of genus $g$ and let $L(t)= \sum_{i=0}^{2g}a_it^i$ be the numerator of its Zeta function.
		Then 
		$$A_{g-k}=\frac{1 }{ q-1}\left[ q^{-k+1}\left(h-\sum_{i=0}^{g+k-1}a_i\right)-\sum_{i=0}^{g-k}a_i\right].$$ 
	\end{lemma}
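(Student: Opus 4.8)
The plan is to generalize the computation carried out in the proof of Lemma \ref{Ag-1}, which handled the case $k=1$. The starting point is the same explicit formula for the number of effective divisors of each degree, obtained by expanding the Zeta function
$$ Z(t) = \sum_{m=0}^{+\infty} A_m t^m = \frac{\sum_{i=0}^{2g} a_i t^i}{(1-t)(1-qt)}, $$
which yields, for every $m \geq 0$,
$$ A_m = \sum_{i=0}^{m} \frac{q^{m-i+1}-1}{q-1}\, a_i. $$
First I would substitute $m = g-k$ into this identity, obtaining
$$ (q-1) A_{g-k} = \sum_{i=0}^{g-k} \left( q^{g-k-i+1} - 1 \right) a_i = q^{g-k+1}\sum_{i=0}^{g-k} q^{-i} a_i - \sum_{i=0}^{g-k} a_i. $$

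The second step is to rewrite the weighted sum $\sum_{i=0}^{g-k} q^{-i} a_i$ using the functional-equation relation $a_i = q^{i-g} a_{2g-i}$ for $0 \leq i \leq g$. Applying this exactly as in the proof of Lemma \ref{Ag-1}, each term $q^{g-k+1} q^{-i} a_i$ becomes $q^{-k+1} a_{2g-i}$, so that
$$ q^{g-k+1}\sum_{i=0}^{g-k} q^{-i} a_i = q^{-k+1} \sum_{i=0}^{g-k} a_{2g-i} = q^{-k+1} \sum_{j=g+k}^{2g} a_j, $$
after reindexing $j = 2g-i$. This is the main computational manoeuvre, and the one place where care is needed: unlike the $k=1$ case, for $k \geq 2$ the index $g+k$ exceeds $g+1$, so the range $\{g+k, \ldots, 2g\}$ is a proper tail of the coefficient list and one must keep the summation limits straight when converting it into the complementary range.

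The final step is cosmetic: express the tail sum in terms of the class number $h = L(1) = \sum_{i=0}^{2g} a_i$ by writing
$$ \sum_{j=g+k}^{2g} a_j = h - \sum_{i=0}^{g+k-1} a_i. $$
Substituting this back gives
$$ (q-1) A_{g-k} = q^{-k+1}\left( h - \sum_{i=0}^{g+k-1} a_i \right) - \sum_{i=0}^{g-k} a_i, $$
which is precisely the claimed formula after dividing by $q-1$. I do not anticipate a genuine obstacle here; the only subtlety is bookkeeping in the reindexing, and one should note that the derivation tacitly assumes $1 \leq k \leq g$ so that both summation ranges and the use of the functional equation on indices $\leq g$ remain valid. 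As a sanity check, setting $k=1$ recovers exactly the statement of Lemma \ref{Ag-1}, since then $q^{-k+1}=1$, the first sum runs to $g$, and the second to $g-1$.
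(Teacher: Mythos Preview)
Your proposal is correct and follows exactly the approach the paper intends: it says the proof is ``similar to Lemma \ref{Ag-1}'' and your argument is precisely that generalization, carrying the same expansion of $(q-1)A_m$, the same use of the functional equation $a_i=q^{i-g}a_{2g-i}$, and the same substitution $h=\sum_{i=0}^{2g}a_i$. The bookkeeping and the sanity check at $k=1$ are accurate.
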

	
	\begin{proof}
	The proof is similar to Lemma \ref{Ag-1} (see \cite[Lemma 3.6]{bariro}).
	\end{proof}
	
	\begin{proposition} \label{ordinary2}
		Let $\CC$ be a curve of genus $g>0$ over a finite field $\F_q$ of characteristic $p$ and of $p$-rank $\gamma$.
		There is always a  degree $\gamma-1$ zero dimension divisor on $\CC$.
	\end{proposition}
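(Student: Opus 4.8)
The plan is to produce a divisor class of degree $\gamma-1$ that contains no effective divisor; any representative $D$ of such a class satisfies $\deg(D)=\gamma-1$ and $\dim(D)=0$, which is exactly what is required. Two boundary cases are immediate and I would dispose of them first: if $\gamma=0$ then $\deg(D)=-1<0$ forces $\dim(D)=0$, and if $A_{\gamma-1}=0$ then (divisors of degree $\gamma-1\ge 0$ exist by F.~K. Schmidt, while no effective one does) any divisor of that degree is non-special of dimension $0$. So I may assume $\gamma\ge 1$ and $A_{\gamma-1}\ge 1$, choose $D_0\in{\div}_{\gamma-1}^{+}(\FF/\F_q)$, and form the map $\psi_{\gamma-1,D_0}\colon{\div}_{\gamma-1}^{+}(\FF/\F_q)\to\Jac(\FF/\F_q)$ of the proof of Proposition \ref{prop5}. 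It then suffices to show that $\psi_{\gamma-1,D_0}$ is \emph{not} surjective, since a class missed by $\psi_{\gamma-1,D_0}$ has no effective representative.

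The heart of the argument is a congruence modulo $p$. First I would reduce $A_{\gamma-1}$ using the expansion established in the proof of Lemma \ref{Ag-1}, namely $A_m=\sum_{i=0}^{m}\frac{q^{m-i+1}-1}{q-1}\,a_i$ for all $m\ge 0$. Taking $m=\gamma-1$ and observing that $\frac{q^{\gamma-i}-1}{q-1}=1+q+\cdots+q^{\gamma-i-1}\equiv 1\pmod p$ for $0\le i\le \gamma-1$ (because $q\equiv 0\pmod p$), I obtain $A_{\gamma-1}\equiv\sum_{i=0}^{\gamma-1}a_i\pmod p$. Next, since $\gamma=\deg\bigl(L(t)\bmod p\bigr)$, one has $p\mid a_i$ for all $i>\gamma$ and $p\nmid a_\gamma$; hence from $h=L(1)=\sum_{i=0}^{2g}a_i$ I get $h\equiv\sum_{i=0}^{\gamma}a_i\pmod p$. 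Subtracting the two congruences yields
\[
h-A_{\gamma-1}\equiv a_\gamma\not\equiv 0\pmod p .
\]

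Finally I would link this to non-surjectivity. If $\psi_{\gamma-1,D_0}$ were surjective, then every class of degree $\gamma-1$ would contain an effective divisor, so $\dim(R+D_0)\ge 1$ for every $[R]\in\Jac(\FF/\F_q)$; counting the effective divisors in each class as in the proof of Proposition \ref{prop5} gives $A_{\gamma-1}=\sum_{[R]}\frac{q^{\dim(R+D_0)}-1}{q-1}$, where each summand equals $1+q+\cdots+q^{\dim(R+D_0)-1}\equiv 1\pmod q$ because $\dim(R+D_0)\ge 1$. Hence $A_{\gamma-1}\equiv h\pmod q$, and in particular $A_{\gamma-1}\equiv h\pmod p$, contradicting the displayed congruence. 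Therefore $\psi_{\gamma-1,D_0}$ is not surjective and the desired divisor exists. The step demanding the most care is this last fiber-counting reduction modulo $q$ (together with the clean handling of the boundary cases $\gamma=0$ and $A_{\gamma-1}=0$); the decisive arithmetic input is $p\nmid a_\gamma$, the defining property of the $p$-rank, and it is precisely because the divisor is extracted from a counting/congruence obstruction rather than from the Cartier operator that no explicit construction of $D$ is obtained, in contrast with Proposition \ref{ordinary}.
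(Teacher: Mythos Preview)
Your argument is correct and rests on the same arithmetic core as the paper's proof: the congruence $h_{\gamma-1,0}\equiv a_\gamma\not\equiv 0\pmod p$, which forces the existence of a degree $\gamma-1$ class with no effective representative. The organisational difference is that the paper computes $h_{\gamma-1,0}$ directly, writing $h_{\gamma-1,0}=h-\sum_{i\ge 1}h_{\gamma-1,i}$, expressing $\sum_{i\ge 1}h_{\gamma-1,i}$ via $A_{g-k}$ and then invoking the closed formula of Lemma~\ref{Ag-k} for $A_{g-k}$ before reducing modulo $p$; you instead reduce modulo $p$ at the outset, using only the elementary expansion $A_m=\sum_{i=0}^{m}\tfrac{q^{m-i+1}-1}{q-1}a_i$ from the proof of Lemma~\ref{Ag-1}, and phrase the conclusion as a contradiction (surjectivity of $\psi_{\gamma-1,D_0}$ would force $A_{\gamma-1}\equiv h\pmod q$). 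Your route is a little shorter since it bypasses Lemma~\ref{Ag-k} entirely, while the paper's direct computation has the mild advantage of exhibiting $h_{\gamma-1,0}$ explicitly rather than by contradiction; the handling of the boundary cases $\gamma=0$ and $A_{\gamma-1}=0$ is a nice touch that the paper leaves implicit.
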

	\begin{proof}
		Recall that $h_{n,k}$ the number of classes of divisors of degree $n$ and of dimension $k$, for all $g \geq  k>0$ we get
		$$h=\sum_{i=0}^{\infty} h_{g-k,i}$$
		so $$h_{g-k,0} = h- \sum_{i=1}^{\infty} h_{g-k,i}.$$
		Now $$A_{g-k}= \sum_{i=1}^{\infty} \frac{q^i-1}{q-1} h_{g-k,i}$$
		hence we can write
		$$\sum_{i=1}^{\infty} h_{g-k,i}=  \sum_{i=1}^{\infty} q^i h_{g-k,i}- (q-1) A_{g-k} .$$
		Using the expression of $A_{g-k}$ from Lemma \ref{Ag-k} and then 
		$$h=\sum_{i=0}^{g+k-1} a_i + \sum_{i=g+k}^{2g}  a_i \equiv \sum_{i=0}^{\gamma} a_i \pmod{p}$$
		we get for $k=g-\gamma+1$
		\begin{eqnarray*}
			h_{g-k,0} &=& h (1+q^{-k+1})  - q^{-k+1} \sum_{i=0}^{g+k-1} a_i - \sum_{i=0}^{g-k} a_i - \sum_{i=1}^{\infty} q^i h_{g-k,i} \\
			&=&  \sum_{i=0}^{2 g} a_i+q^{-k+1}  \sum_{i=g+k}^{2g} a_i- \sum_{i=0}^{g-k} a_i- \sum_{i=1}^{\infty} q^i h_{g-1,i} \\
			&=&  \sum_{i=g-k+1}^{ 2g } a_i+q^{-k+1}  \sum_{i=g+k}^{2g} a_i- \sum_{i=1}^{\infty} q^i h_{g-1,i} \\
			& \equiv &  \sum_{i=g-k+1}^{ \gamma } a_i \pmod{p} \\
			& \equiv & a_{\gamma} \not \equiv 0 \pmod{p}.
		\end{eqnarray*} 
		Hence $h_{\gamma-1,0}$ is not zero and hence is positive.
	\end{proof}
	
	\begin{remark}	
		Note that this proposition is interesting only in the case where $q=2$ and  $\gamma=g-k$ with $k \leq 3$ or $q=3$ and  $\gamma=g$.
	\end{remark}
	
	\begin{corollary} \label{ordinary3}
	Let $\CC$ be an ordinary curve of genus $g>0$ over a finite field $\F_q$. There is always a non-special divisor of degree $g-1$ on $\CC$.	
	\end{corollary}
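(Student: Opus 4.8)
The plan is to obtain this statement as an immediate specialization of Proposition \ref{ordinary2}. By definition, a curve $\CC$ is ordinary precisely when its $p$-rank equals its genus, that is $\gamma = g$. I would begin by recording this equivalence, which was already noted just before Proposition \ref{ordinary}: the curve $\CC$ is ordinary if and only if $p \nmid a_g$, equivalently $\gamma = \deg(L(t) \bmod p) = g$.

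Next, I would apply Proposition \ref{ordinary2} with $\gamma = g$. That proposition guarantees, for any curve of $p$-rank $\gamma$, the existence of a divisor of degree $\gamma - 1$ and dimension zero. Substituting $\gamma = g$ yields a divisor $D$ of degree $g-1$ with $\dim(D) = 0$.

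Finally, I would translate the condition ``dimension zero'' into ``non-special'' using assertion 2 of the elementary lemma of trivial observations stated after Proposition \ref{BR}: a divisor of degree $g-1$ is non-special if and only if its dimension is zero. Hence the divisor $D$ produced above is precisely a non-special divisor of degree $g-1$, which is the desired conclusion.

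I do not expect any genuine obstacle here, since all the arithmetic content---in particular the computation of $h_{g-k,0}$ modulo $p$ via Lemma \ref{Ag-k} together with the class-number decomposition---is already carried out in the proof of Proposition \ref{ordinary2}. The only point to check is the elementary book-keeping that the ordinary case $\gamma = g$ corresponds to $k = g - \gamma + 1 = 1$, so that the degree $\gamma - 1$ is indeed $g-1$; this is immediate. Thus the corollary reduces to a specialization of Proposition \ref{ordinary2} followed by an invocation of the degree-$(g-1)$ characterization of non-speciality, and requires no further work.
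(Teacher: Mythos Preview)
Your proposal is correct and matches the paper's approach exactly: the corollary is stated without proof precisely because it is the immediate specialization of Proposition \ref{ordinary2} to the case $\gamma=g$, together with the observation that a degree $g-1$ divisor of dimension zero is non-special. No further argument is needed.
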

	
	\subsection{Particular case : Asymptotically exact towers}
	
	In this section we adapt the results in \cite[Section 5.2]{bariro} to prove the existence of non-special divisors of degree $g-1$ in asymptotically exact towers. 
	
	First, let us recall 
	the notion of asymptotically exact sequence
	of algebraic function fields introduced in \cite{tsfa}.

	\begin{definition}
		Consider a sequence ${\mathcal F}/\F_q=(\FF_k/\F_q)_{k\geq 1}$  
		of algebraic function fields $\FF_k/\F_q$ defined over $\F_q$ of genus $g_k$. 
		We suppose that  the sequence of genus $g_k$ is an increasing 
		sequence growing to infinity. 
		The sequence ${\mathcal F}/\F_q$ is called \emph{asymptotically 
			exact} if for all $m \geq 1$ the following limit exists:  
		$$\beta_m({\mathcal F}/\F_q)= \lim_{g_k \rightarrow \infty} \frac{B_m(\FF_k/\F_q)}{g_k} $$  
		where $B_m(\FF_k/\F_q)$ is the number of places of degree $m$
		on $\FF_k/\F_q$. 
	\end{definition}
	
	Now, let us recall the following two results used by I. Shparlinski, 
	M. Tsfasman and S. Vladut in \cite{shtsvl}. These results follow easily from Corollary 2 and Theorem 6 of \cite{tsfa}.
	
	\begin{lemma}\label{lem1}
		Let ${\mathcal F}/\F_q=(\FF_k/\F_q)_{k \geq 1}$ be an asymptotically exact
		sequence 
		of algebraic function fields defined over $\F_q$ and 
		$h_k$ be the class number of $\FF_k/\F_q$. 
		Then $$log_qh_k=g_k\left( 1+\sum_{m=1}^{\infty}\beta_m \cdot log_q\frac{q^m}{q^m-1} \right)+o(g_k)$$
	\end{lemma}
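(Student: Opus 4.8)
The plan is to start from the factorization of the class number through the inverse roots of the Zeta numerator. Since $h_k = L(\FF_k/\F_q,1) = \prod_{i=1}^{g_k}(1-\alpha_i)(1-\overline{\alpha_i}) = \prod_{i=1}^{g_k}|1-\alpha_i|^2$ with $|\alpha_i|=\sqrt q$, and $\alpha_i\overline{\alpha_i}=q$, I would write $|1-\alpha_i|^2 = q\,|1-\alpha_i^{-1}|^2$ and expand each logarithm as a convergent power series (legitimate because $|\alpha_i^{-1}| = q^{-1/2}<1$). Summing over $i$, using $\alpha_i^{-r}+\overline{\alpha_i}^{-r} = q^{-r}(\alpha_i^r+\overline{\alpha_i}^r)$ and the point-count relation $N_r(\FF_k) = \sum_{m\mid r} mB_m(\FF_k) = q^r+1-\sum_i(\alpha_i^r+\overline{\alpha_i}^r)$, this produces the exact class number identity
\begin{equation*}
\log h_k = g_k\log q - \sum_{r=1}^{\infty}\frac{q^r+1-N_r(\FF_k)}{r\,q^r},
\end{equation*}
which is the analytic content of Corollary 2 and Theorem 6 of \cite{tsfa}.

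Next I would divide by $g_k$ and let $g_k\to\infty$. The interchange of limit and summation is the crux, and it is made legitimate by the Weil bound $|q^r+1-N_r(\FF_k)| = \bigl|\sum_i(\alpha_i^r+\overline{\alpha_i}^r)\bigr| \le 2g_k q^{r/2}$, which yields the $k$-independent domination
\begin{equation*}
\frac{1}{g_k}\,\frac{|q^r+1-N_r(\FF_k)|}{r\,q^r} \le \frac{2}{r\,q^{r/2}},
\end{equation*}
whose right-hand side is summable in $r$. By dominated convergence for series (Tannery's theorem) I may then pass to the limit term by term. For each fixed $r$, writing $N_r(\FF_k)=\sum_{m\mid r}mB_m(\FF_k)$ and using $B_m(\FF_k)/g_k\to\beta_m$ together with $(q^r+1)/g_k\to 0$, the termwise limit is $(q^r+1-N_r(\FF_k))/g_k \to -\sum_{m\mid r}m\beta_m$.

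Finally I would reorganise the resulting double series by the substitution $r=mj$:
\begin{equation*}
\sum_{r=1}^{\infty}\frac{1}{r\,q^r}\sum_{m\mid r}m\beta_m = \sum_{m=1}^{\infty}\beta_m\sum_{j=1}^{\infty}\frac{1}{j\,q^{mj}} = \sum_{m=1}^{\infty}\beta_m\log\frac{q^m}{q^m-1},
\end{equation*}
the last equality being $-\log(1-x)=\sum_{j\ge1}x^j/j$ at $x=q^{-m}$. Combining the three steps gives $\lim_k \log h_k/g_k = \log q + \sum_m\beta_m\log\frac{q^m}{q^m-1}$, and dividing through by $\log q$ converts this to the asserted estimate $\log_q h_k = g_k\bigl(1+\sum_m\beta_m\log_q\frac{q^m}{q^m-1}\bigr)+o(g_k)$. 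The main obstacle is precisely the justification of the interchange of limit and summation: one cannot split off the pieces $g_k^{-1}\sum_r r^{-1}q^{-r}$ and $g_k^{-1}\sum_r N_r r^{-1}q^{-r}$, since each diverges separately, so the uniform Weil domination must be used to handle the combined summand. A secondary point is the absolute convergence of the final series $\sum_m\beta_m\log\frac{q^m}{q^m-1}$, needed to validate the rearrangement; this follows from the Drinfeld--Vladut type basic inequality $\sum_m m\beta_m/(q^{m/2}-1)\le 1$, which forces $\beta_m\log\frac{q^m}{q^m-1}=O(m^{-1}q^{-m/2})$.
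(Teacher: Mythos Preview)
Your argument is correct and complete. The paper itself does not prove this lemma at all: it simply states the result and refers the reader to Corollary~2 and Theorem~6 of \cite{tsfa}, so there is no ``paper's own proof'' to compare against beyond that citation. What you have written is precisely the standard derivation behind those cited results: the exact identity $\log h_k = g_k\log q - \sum_{r\ge 1}(q^r+1-N_r)/(rq^r)$ obtained from the Euler product/Weil factorisation of $L(1)$, followed by a dominated-convergence passage to the limit using the Weil bound, and the divisor-sum rearrangement yielding $\sum_m\beta_m\log\frac{q^m}{q^m-1}$. Your care about the two subtle points---that one must keep the combined summand $q^r+1-N_r$ together to get uniform domination, and that absolute convergence of the limiting series is guaranteed by the basic inequality $\sum_m m\beta_m/(q^{m/2}-1)\le 1$---is exactly what makes the argument rigorous and is often glossed over in expositions.
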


	\begin{lemma}\label{lem2}
		Let $A_{a_k}$ be the number of effective divisors of degree $a_k$ on
		$\FF_k/\F_q$.
		If $$a_k\geq g_k\left(\sum_{m=1}^{\infty}\frac{m\beta_m}{q^m-1}
		\right)+o(g_k)$$ 
		then
		$$log_qA_{a_k}=a_k+g_k \cdot \sum_{m=1}^{\infty}\beta_m \cdot log_q\frac{q^m}{q^m-1}+o(g_k).$$
	\end{lemma}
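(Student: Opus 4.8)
The plan is to read the asymptotics of $A_{a_k}$ directly off the zeta function of $\FF_k/\F_q$, in the spirit of Tsfasman's analysis in \cite{tsfa}. I would start from
\[
Z_k(t)=\sum_{n\geq 0}A_n t^n=\prod_{m\geq 1}(1-t^m)^{-B_m(\FF_k/\F_q)}=\frac{L(\FF_k/\F_q,t)}{(1-t)(1-qt)},
\]
so that $A_{a_k}=[t^{a_k}]Z_k(t)$. The dominant singularity of $Z_k$ is the simple pole at $t=1/q$, whose residue is governed by $L(\FF_k/\F_q,1/q)=q^{-g_k}h_k$ (from the functional equation $L(t)=q^{g_k}t^{2g_k}L(1/(qt))$). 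Writing $\Sigma:=\sum_{m\geq 1}\beta_m\log_q\frac{q^m}{q^m-1}$ and using Lemma \ref{lem1} in the form $\log_q h_k=g_k(1+\Sigma)+o(g_k)$, the target identity is equivalent to $\log_q A_{a_k}=a_k+\log_q h_k-g_k+o(g_k)$; that is, I must show that in the stated range of $a_k$ the growth of $A_{a_k}$ is dictated by this pole.

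For the upper bound I would exploit positivity of the coefficients: for every $0<t<1/q$ one has $A_{a_k}t^{a_k}\leq Z_k(t)$, hence
\[
\log_q A_{a_k}\leq -a_k\log_q t-\sum_{m\geq 1}B_m\log_q(1-t^m).
\]
Substituting $B_m=\beta_m g_k+o(g_k)$ and controlling the tail in $m$ (the convergence being guaranteed, for an asymptotically exact family, by the generalized bound $\sum_m\frac{m\beta_m}{q^{m/2}-1}\leq 1$), the right-hand side becomes $g_k\,\Phi(t)+o(g_k)$ with $\Phi(t)=\sum_m\beta_m\log_q\frac{1}{1-t^m}-\tfrac{a_k}{g_k}\log_q t$. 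The key point is that $\Phi$ is decreasing on $(0,1/q)$ under our hypothesis: its derivative has the sign of $\sum_m\beta_m\frac{mt^m}{1-t^m}-\tfrac{a_k}{g_k}$, and the increasing quantity $\sum_m\beta_m\frac{mt^m}{1-t^m}$ only reaches $\sum_m\frac{m\beta_m}{q^m-1}$ as $t\uparrow 1/q$. Letting $t\uparrow 1/q$ then yields $\log_q A_{a_k}\leq a_k+g_k\Sigma+o(g_k)$. This is exactly where the hypothesis $a_k\geq g_k\sum_m\frac{m\beta_m}{q^m-1}+o(g_k)$ is used: it guarantees that the weighted mean degree $tZ_k'(t)/Z_k(t)=\sum_m B_m\frac{mt^m}{1-t^m}$ attains $a_k$ only at the pole, so that the optimal exponent is the one produced by $t=1/q$.

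For the lower bound the easy range is $a_k\geq g_k$: expanding $A_{a_k}=\sum_{[D]}\frac{q^{\dim[D]}-1}{q-1}$ over the $h_k$ classes of degree $a_k$ and using $\dim[D]\geq\deg([D])-g_k+1\geq 1$ (Riemann--Roch), one gets $A_{a_k}\geq h_k\frac{q^{a_k-g_k+1}-1}{q-1}$, hence $\log_q A_{a_k}\geq a_k+\log_q h_k-g_k-o(1)$, matching the upper bound. The main obstacle will be the intermediate range $g_k\sum_m\frac{m\beta_m}{q^m-1}+o(g_k)\leq a_k<g_k$, where $q^{a_k-g_k+1}<1$ and this Riemann--Roch estimate collapses. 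Here I would appeal to the finer results of \cite{tsfa} (Corollary 2 and Theorem 6): as $t\uparrow 1/q$ the mass of the measure $\propto A_n t^n$ concentrates around degrees up to $\sum_m B_m\frac{m}{q^m-1}$, and the places of large degree---whose contribution is not seen by the finitely many normalized densities $\beta_m$ but is precisely what makes $Z_k$ blow up at $1/q$---produce enough effective divisors to force $\log_q A_{a_k}\geq a_k+g_k\Sigma-o(g_k)$ on this whole range. Combining the two bounds gives the stated asymptotic, with essentially all the difficulty located in this Tauberian lower estimate.
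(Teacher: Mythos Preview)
The paper does not actually prove this lemma: immediately before it, the authors state that Lemma~\ref{lem1} and Lemma~\ref{lem2} ``follow easily from Corollary~2 and Theorem~6 of \cite{tsfa}'' and leave it at that. So there is no in-paper argument to compare against beyond that citation, and your proposal---which in the end also defers the delicate lower bound to precisely Corollary~2 and Theorem~6 of \cite{tsfa}---is fully aligned with the paper's treatment.

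Where you go further than the paper is in sketching the upper bound and the ``easy'' lower bound explicitly, and both sketches are sound. Two small remarks. First, in the Chernoff-type upper bound $A_{a_k}\le t^{-a_k}Z_k(t)$ with the Euler product $\log_q Z_k(t)=\sum_m B_m\log_q\frac{1}{1-t^m}$, the replacement $B_m\approx\beta_m g_k$ is legitimate for each fixed $t<1/q$: the tail $\sum_{m>M}B_m\log_q\frac{1}{1-t^m}$ splits, via the Weil bound $mB_m=q^m+O(g_kq^{m/2})$, into a piece that is $O_t(1)$ (hence $o(g_k)$) and a piece of size $O(g_k(q^{1/2}t)^M)$. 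One must therefore take the limits in the order ``fix $t<1/q$, send $k\to\infty$, then let $t\uparrow 1/q$''; your phrasing suggests this but it is worth making explicit, since the $O_t(1)$ term diverges at $t=1/q$. Second, your monotonicity claim for $\Phi$ holds only in the asymptotic sense $a_k/g_k\ge\sum_m\frac{m\beta_m}{q^m-1}+o(1)$, which is exactly what the hypothesis gives; this is enough because any slack is absorbed in the $o(g_k)$.

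You are right that the genuine content lies in the lower bound on the range $g_k\sum_m\frac{m\beta_m}{q^m-1}+o(g_k)\le a_k<g_k$, where Riemann--Roch alone is too weak. That is precisely what Tsfasman's results supply, and it is all the paper invokes as well.
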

	
	These asymptotical properties were established in \cite{tsfa} and \cite{tsvl} in order to estimate the class number $h$ 
	of algebraic function fields of genus $g$ defined over $\F_q$ and also in order to estimate their number of classes of effective divisors of degree $m\leq g-1$. Namely, I. Shparlinski, M. Tsfasman and S. Vladut used in \cite{shtsvl} the inequality $2A_{g_k(1-\epsilon)}<h_k$ where $0< \epsilon <\frac{1}{2}$ and $k$ big enough, under the hypothesis of Lemma \ref{lem1}. In the same spirit, we give here a particular case of their result in the following proposition.
	\begin{proposition}\label{ExiAsy}
		Let ${\mathcal F}/\F_q=(\FF_k/\F_q)_{k\geq 1}$ be an 
		asymptotically exact sequence of algebraic function field defined over $\F_q$.
		Then, there exists an integer $k_0$ such that for any integer $k\geq k_0$, we get:  
		$$A_{g_k-1}<h_k$$ 
		and there is a non-special divisor of degree $g-1$ in $\FF_k/\F_q$. 
	\end{proposition}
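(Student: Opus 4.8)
The plan is to reduce the whole statement to the single inequality $A_{g_k-1}<h_k$, since assertion~2 of Proposition~\ref{prop5} then immediately produces a non-special divisor of degree $g_k-1$ in $\FF_k/\F_q$. So the entire content is to exhibit $k_0$ beyond which $A_{g_k-1}<h_k$, and the natural tool is the pair of asymptotic estimates of Lemmas~\ref{lem1} and~\ref{lem2}. Writing $D:=\sum_{m=1}^{\infty}\beta_m\log_q\frac{q^m}{q^m-1}$, Lemma~\ref{lem1} gives $\log_q h_k=(1+D)g_k+o(g_k)$, and, \emph{provided its hypothesis is met}, Lemma~\ref{lem2} applied with $a_k=g_k-1$ gives $\log_q A_{g_k-1}=(g_k-1)+Dg_k+o(g_k)=(1+D)g_k-1+o(g_k)$.

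Before invoking Lemma~\ref{lem2} I would verify its hypothesis for $a_k=g_k-1$, namely $g_k-1\geq g_k\sum_{m=1}^{\infty}\frac{m\beta_m}{q^m-1}+o(g_k)$. Setting $S:=\sum_{m=1}^{\infty}\frac{m\beta_m}{q^m-1}$, I would use the basic inequality of Tsfasman--Vladut $\sum_{m=1}^{\infty}\frac{m\beta_m}{q^{m/2}-1}\leq 1$ (see \cite{tsvl}); since $q^{m/2}-1<q^m-1$ for every $m\geq 1$ and $q\geq 2$, each term satisfies $\frac{m\beta_m}{q^m-1}\leq \frac{m\beta_m}{q^{m/2}-1}$, with strict inequality as soon as some $\beta_m>0$. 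Hence $S<1$ in every case (and trivially $S=0<1$ when all $\beta_m$ vanish). Because $S<1$ and $g_k\to\infty$, the threshold $Sg_k+o(g_k)$ eventually falls below $g_k-1$, so Lemma~\ref{lem2} does apply for all large~$k$.

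Subtracting the two estimates, the leading terms $(1+D)g_k$ cancel and one is left with $\log_q h_k-\log_q A_{g_k-1}=1+o(g_k)$; for $k$ large this is positive, whence $A_{g_k-1}<h_k$, and the conclusion then follows from Proposition~\ref{prop5}. The step I expect to be the main obstacle is exactly this final cancellation. Unlike the inequality $2A_{g_k(1-\epsilon)}<h_k$ of Shparlinski--Tsfasman--Vladut, where the degree differs from $g_k$ by a fixed positive fraction $\epsilon g_k$ and the gap between the two logarithms grows linearly (so any $o(g_k)$ error is harmless), here the degree $g_k-1$ lies only one unit below $g_k$ and the two leading terms cancel \emph{exactly}. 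Positivity of $1+o(g_k)$ thus hinges on the combined error term not overwhelming the constant gap, and to secure it rigorously I would appeal to the precise forms of the error estimates behind Corollary~2 and Theorem~6 of \cite{tsfa} (as carried out in \cite[Section~5.2]{bariro}) rather than relying on the bare $o(g_k)$ bookkeeping.
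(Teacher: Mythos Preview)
Your approach is essentially identical to the paper's: verify the hypothesis of Lemma~\ref{lem2} for $a_k=g_k-1$ via the Drinfeld--Vladut--Tsfasman inequality, compare the asymptotic formulas of Lemmas~\ref{lem1} and~\ref{lem2}, and invoke Proposition~\ref{prop5}. The only difference in the bookkeeping is that the paper factors $\frac{1}{q^m-1}=\frac{1}{(q^{m/2}+1)(q^{m/2}-1)}\leq \frac{1}{(\sqrt{q}+1)(q^{m/2}-1)}$ to obtain the sharper bound $S\leq \frac{1}{\sqrt{q}+1}<\frac{1}{2}$, whereas you only get $S<1$; both suffice for the hypothesis of Lemma~\ref{lem2}.

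You are right to flag the final step. After cancellation one is left with $\log_q h_k-\log_q A_{g_k-1}=1+o(g_k)$, and a bare $o(g_k)$ term can certainly swamp the constant~$1$. The paper's proof passes silently over exactly this point (it simply writes ``compare $\log_q A_{g_k-1}$ with $\log_q h_k$ \ldots\ Hence $A_{g_k-1}<h_k$''), so your diagnosis applies to it as well. Your proposed remedy --- going back to the finer error estimates behind the results of \cite{tsfa}, as carried out in \cite[Section~5.2]{bariro} --- is the correct way to close the gap; without it, neither argument is complete as written.
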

	
	\begin{proof}
		The total number of linear equivalence classes of an arbitrary 
		degree equals to the divisor class number $h_k$ of $\FF_k/\F_q$, 
		which is given by Lemma \ref{lem1}. Moreover, for $g_k$ sufficiently large, 
		we have:
		$$\sum_{m=1}^{\infty}\frac{m\beta_m}{q^m-1} \leq  \frac{1}{\sqrt{q}+1} \sum_{m=1}^{\infty}\frac{m\beta_m}{q^{\frac{m}{2}}-1}<\frac{1}{2}$$
		since $q\geq 2$ and $ \sum_{m=1}^{\infty} \frac{m\beta_m}{q^{\frac{m}{2}}-1}\leq 1$ 
		by Corollary 1 of \cite{tsfa}. As $\frac{1}{g_k}<\frac{1}{2}$, one has
		$$ g_k(1-\frac{1}{g_k}) \geq g_k\left(\sum_{m=1}^{\infty}\frac{m\beta_m}{q^m-1}
		\right)+o(g_k)$$ 
		for $k$ big enough.
		Therefore, we can apply Lemma \ref{lem2} and compare $\log_q A_{ g_k(1-1/g_k) }$ with $\log_q h_k$ given by Lemma \ref{lem1}. Hence, there exists an integer $k_0$ such that for $k \geq k_0$,   
		$A_{ g_k-1 }<h_k$. We conclude by Proposition \ref{prop5}. 
	\end{proof}
	
	\subsection{Particular case : curves of defect $k$ over $\F_2$ and $\F_3$} \label{NR}
	
	In this section, we will focus on curves over $\F_2$ and $\F_3$ of genus $g \geq 3$. The existence of non-special divisors of degree $g-1$ is assured for $q \geq 4$, moreover, the cases of curves of genus $g = 1$, $2$ were studied in the introduction of section \ref{g=1} and Theorem \ref{g=2}. The goal is to give some examples of function fields that contain these divisors.
	
	We consider for $r \geq 1$ the number
	$$N_r := N(\FF_r) = \#\{ P \in \bP_k(\FF/\F_q); deg(P)=1 \}$$
	where $\FF_r = \FF \F_{q^r}$ is the constant field extension of $\FF/\F_q$ of degree $r$. Let us remind the equation from \cite{stic}*{Corollary 5.1.16}, for all $r \geq 1$,
	\begin{equation} 
		N_r = q^r+1-\sum_{i=1}^{2g} \alpha_i^r = q^r+1 - \sum_{i=1}^{g} 2q^{r/2} cos(\pi r \phi_i) \label{NiFoncAl} 
	\end{equation}
	
	where $(\alpha_1,\ldots,\alpha_{2g}) = (q^{1/2}e^{i\pi \phi_1},\ldots,q^{1/2}e^{i\pi \phi_g},q^{1/2}e^{-i\pi \phi_1},\ldots,q^{1/2}e^{-i\pi \phi_g})$ are the reciprocals of the roots of $L(t)$ with $\phi_i \in [0,1]$. In particular, since $N_1=N(\FF)$, we have
	$$ N(\FF) = q +1 - \sum_{i=1}^{2g} \alpha_i. $$
	
	\begin{proposition} \label{RelRec} \cite{stic}*{Corollary 5.1.17}
		Let $L(t) = \sum_{i=0}^{2g} a_i t^i$ be the $L$-polynomial of $\FF/\F_q$, and $S_r := N_r-(q^r+1)$. Then we have:
		\begin{enumerate}[label={(\alph*)}]
			\item $L'(t)/L(t) = \sum_{r=1}^{\infty} S_r t^{r-1}$.
			\item $a_0 = 1$ and 
			\begin{equation}
				i a_i = S_i a_0 + S_{i-1} a_1 + \cdots + S_1 a_{i-1} \label{RelRecEg}
			\end{equation}
			for $i=1, \ldots, g$. \\
		\end{enumerate}
		Given $N_1, \ldots, N_g$ and using $a_{2g-i}=q^{g-i}a_i$, we can determine $L(t)$ from \eqref{RelRecEg}.
	\end{proposition}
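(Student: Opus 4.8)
The plan is to exploit the factorization $L(t) = \prod_{i=1}^{2g}(1-\alpha_i t)$ together with the point-count formula already recorded in \eqref{NiFoncAl}, which gives $S_r = N_r - (q^r+1) = -\sum_{i=1}^{2g}\alpha_i^r$. For part (a), I would compute the logarithmic derivative of $L$: since $L$ is a product, $\frac{L'(t)}{L(t)} = \sum_{i=1}^{2g}\frac{-\alpha_i}{1-\alpha_i t}$. Expanding each summand as a geometric series $\frac{-\alpha_i}{1-\alpha_i t} = -\sum_{r=1}^{\infty}\alpha_i^r t^{r-1}$ (valid as a formal power series, equivalently for $|t|$ small), and interchanging the two sums, I obtain $\frac{L'(t)}{L(t)} = \sum_{r=1}^{\infty}\left(-\sum_{i=1}^{2g}\alpha_i^r\right)t^{r-1} = \sum_{r=1}^{\infty}S_r t^{r-1}$, which is exactly (a).

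For part (b), the equality $a_0 = L(0) = 1$ is immediate from the factorization. Then I would clear the denominator in (a), writing $L'(t) = L(t)\cdot\sum_{r=1}^{\infty}S_r t^{r-1}$, and compare coefficients. On the left, $L'(t) = \sum_{i=1}^{2g} i a_i t^{i-1}$. On the right, the Cauchy product $\left(\sum_{j=0}^{2g}a_j t^j\right)\left(\sum_{r\geq 1}S_r t^{r-1}\right)$ has $t^{i-1}$-coefficient equal to $\sum_{j=0}^{i-1}a_j S_{i-j}$, since the constraint $j+r=i$ with $r\geq 1$ forces $0\leq j\leq i-1$. Equating the two coefficients of $t^{i-1}$ yields $i a_i = S_i a_0 + S_{i-1}a_1 + \cdots + S_1 a_{i-1}$ for every $i\geq 1$, in particular for $1\leq i\leq g$, which is (b).

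Finally, for the determination of $L$ from $N_1,\ldots,N_g$: the data $N_1,\ldots,N_g$ give $S_1,\ldots,S_g$, and the recursion (b) is triangular, so it solves successively for $a_1, a_2, \ldots, a_g$ (each $a_i$ is determined by $S_1,\ldots,S_i$ and the previously found $a_1,\ldots,a_{i-1}$, the factor $i$ being invertible in the relevant field). The remaining coefficients $a_{g+1},\ldots,a_{2g}$ are then recovered from the functional equation $a_{2g-i}=q^{g-i}a_i$. I do not anticipate a genuine obstacle here; the only points needing care are the index bookkeeping in the Cauchy product and the observation that the recursion (b) determines the $a_i$ exactly up to $i=g$, after which the functional equation must take over to pin down the upper half of the coefficients.
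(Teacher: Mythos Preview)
Your proof is correct and is precisely the standard argument for this classical result; the paper itself does not supply a proof but merely cites \cite{stic}*{Corollary 5.1.17}, where exactly this logarithmic-derivative computation followed by coefficient comparison is carried out. There is nothing to add.
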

	
	Let $\FF$ be an algebraic function field over $\F_2$ or $\F_3$ of genus $g$, and $k$ its defect, \textit{i.e.}
	$$ |N_1(\FF/\F_2)-(q+1)| =  g[2\sqrt{q}]-k $$
	where $[x]$ denotes the largest integer $\leq x$.
	Let 
	$$L(t)=\sum_{i=0}^{2g} a_i t^i=\prod_{i=1}^{g} [(1-\alpha_i t)(1-\overline{\alpha_i} t)]$$
	be the numerator of the Zeta function of $\FF$. Recall that with the previous conditions, one has
	$$ |N_1(\FF/F_q)-(q+1)|=|-\sum_{i=1}^{2g} \alpha_i| = g[2\sqrt{q}]-k $$
	and then 
	\begin{equation}
		k = g[2\sqrt{q}] - |\sum_{i=1}^{2g} \alpha_i|. \label{defect}
	\end{equation}
	
	 We know that if $N_1 \geq g+1$, there exists a non-special divisor of degree $g-1$. Moreover we know all the curves which contain this kind of divisors over $\F_2$ for $g=1$ and $2$. Since for defect 2 curves over $\F_2$ with $g \geq 3$ one has 
	$$|N_1-3|=2g-2\ \Rightarrow N_1 = 2g+1 \Rightarrow N_1 \geq g+1$$
	the existence of these divisors is obvious and we do not need to use the coefficients $a_n$ for this purpose. Nevertheless, for a defect $k \geq 3$ curves over $\F_2$ and $\F_3$ ($q = 2$ or $3$, hence $[2\sqrt{q}]= q$ ), one has
	$$|N_1-(q+1)|=qg-k\ \Rightarrow N_1 = -qg+k+q+1\ \hbox{or}\ N_1=qg-k+q+1 $$
	the goal of this section is to prove the existence of non-special divisor of degree $g-1$ for curves that satisfy
	\begin{equation}
		0 \leq N_1 = -qg + k + q + 1 \leq g\ \hbox{or}\  0 \leq N_1 =  qg -k + q + 1 \leq g. \label{CEx}
	\end{equation}
	
	the following results generalize the cases (a) and (d) of \cite[Theorem 2.5.1]{serre} by J.P Serre and will help us determine the sign of $ a_g + 2 \sum_{i=0}^{g-1} a_i$ (in order to apply \ref{CNS}). 
	
	\begin{lemma} \label{RelDefTr}
		Let $\alpha$ be a totally positive algebraic integer and \linebreak[4] $\mathsf{k}(\alpha)= Tr(\alpha) - d(\alpha)$ (recall that $Tr(\alpha)$ is the sum of the conjugates of $\alpha$, it is totally positive if all its conjugates are real $> 0$, that $d(\alpha)$ is the degree of its minimal polynomial and $Tr(\alpha) \geq deg(\alpha)$ (see \cite[Remark 2.2.3]{serre})). One has
		\begin{itemize}
			\item If $\mathsf{k}(\alpha) = 0$, then $\alpha =1$.
			\item If $d(\alpha) = 1$, then $\alpha = \mathsf{k}(\alpha)+1$.
			\item If $d(\alpha) = 2$, then $\alpha = \frac{\mathsf{k}(\alpha)+2 \pm \sqrt{(\mathsf{k}(\alpha)+2)^2-4n}}{2}$ with $\frac{(\mathsf{k}(\alpha)+2)^2}{4} > n$.
		\end{itemize}
	\end{lemma}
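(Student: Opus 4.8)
The plan is to reduce everything to the arithmetic--geometric mean (AM--GM) inequality applied to the conjugates of $\alpha$, together with the single arithmetic observation that the norm of a totally positive algebraic integer is a positive rational integer. First I would set $d=d(\alpha)$ and write $\alpha_1,\dots,\alpha_d$ for the conjugates of $\alpha$, all of which are real and strictly positive by total positivity. Then $\mathrm{Tr}(\alpha)=\sum_i\alpha_i$, and the norm $N(\alpha)=\prod_i\alpha_i$ is, up to sign, the constant term of the integer minimal polynomial of $\alpha$, hence a nonzero rational integer; being a product of positive reals it is positive, so $N(\alpha)\geq 1$. The AM--GM inequality then gives
\[
\frac{\mathrm{Tr}(\alpha)}{d}=\frac{1}{d}\sum_{i}\alpha_i\;\geq\;\Bigl(\prod_i\alpha_i\Bigr)^{1/d}=N(\alpha)^{1/d}\;\geq\;1,
\]
which re-proves the inequality $\mathrm{Tr}(\alpha)\geq d$ recalled in the statement and shows $\mathsf{k}(\alpha)\geq 0$.

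For the first bullet I would assume $\mathsf{k}(\alpha)=0$, i.e. $\mathrm{Tr}(\alpha)=d$, which forces equality in the first inequality above. Equality in AM--GM requires all the $\alpha_i$ to be equal to a common value $c>0$, and then $dc=\mathrm{Tr}(\alpha)=d$ gives $c=1$; hence every conjugate equals $1$, the minimal polynomial is $X-1$, and $\alpha=1$. The second bullet is immediate: if $d(\alpha)=1$ then $\alpha$ is a positive rational integer equal to its own trace, so $\alpha=\mathrm{Tr}(\alpha)=\mathsf{k}(\alpha)+1$.

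For the third bullet I would take $d(\alpha)=2$, so that the minimal polynomial is $X^2-\mathrm{Tr}(\alpha)X+n$ with $n=N(\alpha)$ the (positive integer) norm. The quadratic formula yields $\alpha=\tfrac{1}{2}\bigl(\mathrm{Tr}(\alpha)\pm\sqrt{\mathrm{Tr}(\alpha)^2-4n}\bigr)$, and substituting $\mathrm{Tr}(\alpha)=\mathsf{k}(\alpha)+2$ produces exactly the stated expression. It then remains to justify the \emph{strict} inequality $(\mathsf{k}(\alpha)+2)^2/4>n$: the discriminant $\mathrm{Tr}(\alpha)^2-4n$ must be strictly positive rather than merely nonnegative, since a vanishing discriminant would make $\alpha=\mathrm{Tr}(\alpha)/2$ rational, contradicting $d(\alpha)=2$.

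The whole argument is essentially routine, and I do not expect a genuine obstacle. The only two points that need a little care are precisely where the hypotheses enter: in the first bullet, extracting $c=1$ from the AM--GM equality case (this is where \emph{total positivity} together with $N(\alpha)\in\Z_{>0}$ is used), and in the third bullet, upgrading the discriminant inequality from $\geq 0$ to $>0$ (this is where \emph{$d(\alpha)=2$ exactly} is used). Everything else is the quadratic formula and the definition of $\mathsf{k}$.
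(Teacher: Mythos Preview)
Your proof is correct and follows essentially the same approach as the paper: bullets two and three are handled identically by reading off the minimal polynomial and applying the quadratic formula, and your justification of the strict discriminant inequality in the degree-two case is in fact more careful than the paper's. The only difference is that for the first bullet the paper simply cites \cite[Corollary 2.2.4]{serre}, whereas you supply the standard self-contained AM--GM argument that underlies that citation; this is a minor improvement in self-containment rather than a genuinely different route.
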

	
	\begin{proof}
		\begin{itemize}
			\item For the case $\mathsf{k}(\alpha) = 0$ see \cite[Corollary 2.2.4]{serre}.
			\item If $d(\alpha) = 1 $, then the minimal polynomial of $\alpha$ is $x-Tr(\alpha)$ and $\alpha = Tr(\alpha)$, we conclude that $\mathsf{k}(\alpha)= Tr(\alpha) - 1 = \alpha -1$. 
			\item If $d(\alpha) = 2 $, then the minimal polynomial of $\alpha$ is \linebreak[4 ]$x^2-Tr(\alpha)+n = x^2 -(\mathsf{k}(\alpha)+2)+n$ with real positive roots, namely $\frac{\mathsf{k}(\alpha)+2 \pm \sqrt{(\mathsf{k}(\alpha)+2)^2-4n}}{2}$ only if $\frac{(\mathsf{k}(\alpha)+2)^2}{4} > n$.
		\end{itemize}
	\end{proof}

	\begin{theorem} \label{SCaseN}
		For a curve over $\F_q$ such that $|N_1(\FF/\F_q)-(q+1)| = g[2\sqrt{q}]-k$ with $g \geq 2$, one has 
		
		\begin{enumerate}[label={(\alph*)}]
			\item For $ 3 \leq k \leq 2 \cdot [2\sqrt{q}]$, if there exists $i \in \{1, \ldots, g \}$ such that $ \alpha_i+\overline{\alpha_i} = [2\sqrt{q}] - k $, then we can reorganize the tuple $(\alpha_1+\overline{\alpha_1},\ldots, \alpha_g+\overline{\alpha_g})$ to get
			$$(\alpha_1+\overline{\alpha_1},\ldots, \alpha_g+\overline{\alpha_g}) = \pm([2\sqrt{q}], \ldots,[2\sqrt{q}],[2\sqrt{q}]-k)$$ 
			\item If  there exists $i, j \in \{1, \ldots, g \}$ such that  $[2\sqrt{q}]+1-\alpha_i-\overline{\alpha_i}$ and \linebreak[4] $[2\sqrt{q}]+1-\alpha_j-\overline{\alpha_j} $ are conjugated with $d([2\sqrt{q}]+1-\alpha_i-\overline{\alpha_i}) = 2$ and
			$$\alpha_i+\overline{\alpha_i}+\alpha_j+\overline{\alpha_j} = 2[2\sqrt{q}] - k  $$
			then we can reorganize the tuple $(\alpha_1+\overline{\alpha_1},\ldots, \alpha_i+\overline{\alpha_i} ,\ldots, \alpha_g+\overline{\alpha_g})$ to get
			
		    \begin{align*}
		    	(\alpha_1+\overline{\alpha_1}, \ldots, \alpha_g+\overline{\alpha_g}) 
		    	= \pm & \Big([2\sqrt{q}], \ldots, [2\sqrt{q}], \\
		    	&[2\sqrt{q}] + 1 - \frac{k+2 + \Delta}{2}, [2\sqrt{q}] + 1 - \frac{k+2 - \Delta}{2} \Big)
		    \end{align*}

			with $\Delta = \sqrt{(k+2)^2-4n}$ and $\frac{(k+2)^2}{4} > n$. \\
			This holds for $-2(2\sqrt{q}-[2\sqrt{q}]) \leq k \pm \Delta \leq 4\sqrt{q}+2[2\sqrt{q}] $. 
		\end{enumerate}
		
	\end{theorem}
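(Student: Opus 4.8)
The plan is to translate everything into Serre's language of totally positive algebraic integers and run a ``defect budget'' argument. Write $\beta_i = \alpha_i + \overline{\alpha_i}$ for $i=1,\dots,g$; by \eqref{NiFoncAl} these are real algebraic integers lying in $[-2\sqrt q,2\sqrt q]$, and the multiset $\{\beta_1,\dots,\beta_g\}$ is stable under Galois conjugation since $\prod_{i=1}^g(X-\beta_i)$ is the real (trace) Weil polynomial, which has integer coefficients. Again by \eqref{NiFoncAl} one has $\sum_{i=1}^g \beta_i = (q+1)-N_1$, so the hypothesis $|N_1-(q+1)| = g[2\sqrt q]-k$ reads $\sum_{i=1}^g \beta_i = \pm(g[2\sqrt q]-k)$. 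The two signs produce the two tuples differing by the global $\pm$ in the statement, so I would fix the sign $+$ and recover the other case by sending each $\alpha_i$ to $-\alpha_i$.

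Next I would set $m=[2\sqrt q]$ and introduce $\gamma_i = m+1-\beta_i$. Since $\beta_i \le 2\sqrt q < m+1$, each $\gamma_i$ is a totally positive algebraic integer (its conjugates, of the form $m+1-\beta_j$, are all positive), and $\{\gamma_i\}$ is again Galois-stable. Grouping the $\gamma_i$ into distinct Galois orbits $\delta_1,\dots,\delta_s$ of degrees $d_l$ and multiplicities $\mu_l$, counting elements gives $\sum_l \mu_l d_l = g$, while summing values gives $\sum_l \mu_l\, Tr(\delta_l) = \sum_i \gamma_i = g(m+1)-\sum_i\beta_i = g+k$ in the $+$ case. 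Subtracting the two identities yields the key budget equation
$$\sum_{l=1}^s \mu_l\,\mathsf{k}(\delta_l) = k,$$
where each $\mathsf{k}(\delta_l)=Tr(\delta_l)-d(\delta_l)\ge 0$ because $\delta_l$ is totally positive (cf. \cite[Remark 2.2.3]{serre}).

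Now the hypotheses in (a) and (b) each exhibit a single orbit saturating the entire budget. In case (a), $\beta_i = m-k$ gives $\gamma_i = k+1$, a rational integer with $d=1$ and $\mathsf{k}(k+1)=k$. In case (b), the conjugate pair forms a degree-$2$ orbit of trace $\gamma_i+\gamma_j = 2(m+1)-(\beta_i+\beta_j) = k+2$, hence $\mathsf{k}=k$. In either case the corresponding term is $\mu_l k$, and since $k\ge 3$ and all terms are nonnegative, the budget equation forces that orbit to occur with multiplicity $\mu_l=1$ and every other orbit to satisfy $\mathsf{k}(\delta_{l'})=0$. By the first bullet of Lemma \ref{RelDefTr} the latter are all $\delta_{l'}=1$, i.e. $\beta=m=[2\sqrt q]$. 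In case (a) this reconstructs $(\beta_1,\dots,\beta_g)=(m,\dots,m,m-k)$; in case (b), the third bullet of Lemma \ref{RelDefTr} applied to the saturating orbit writes it as $\frac{k+2\pm\Delta}{2}$ with $\Delta=\sqrt{(k+2)^2-4n}$ and $(k+2)^2/4>n$, so $\beta_i = m+1-\frac{k+2+\Delta}{2}$ and $\beta_j = m+1-\frac{k+2-\Delta}{2}$, which is the asserted tuple. The stated inequalities on $k$ and on $k\pm\Delta$ are precisely the conditions that these $\beta$-values lie in $[-2\sqrt q,2\sqrt q]$, automatic since they arise from genuine $\alpha_i+\overline{\alpha_i}$.

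The main obstacle I anticipate is the orbit-and-multiplicity bookkeeping: one must check that the hypothesis element really is a full Galois orbit (degree $1$ in (a), degree $2$ in (b)), that it cannot recur with multiplicity $\ge 2$ without overflowing the budget, and that ``$\mathsf{k}=0$ forces value $1$'' applies uniformly to all remaining orbits. Everything else is the linear algebra of the two counting identities combined with the nonnegativity $\mathsf{k}\ge 0$.
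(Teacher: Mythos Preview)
Your proposal is correct and follows essentially the same argument as the paper. The paper works with the polynomial $P(X)=\prod_{i=1}^g(X-([2\sqrt q]+1-\alpha_i-\overline{\alpha_i}))$ and the map $\kappa$ sending a monic polynomial to (sum of roots) $-$ (degree), then factors $P=\prod_\lambda Q_\lambda$ into irreducibles and uses additivity $\kappa(P)=\sum_\lambda\kappa(Q_\lambda)=k$; this is exactly your budget equation $\sum_l\mu_l\,\mathsf{k}(\delta_l)=k$ phrased polynomially, and the conclusion via Lemma~\ref{RelDefTr} is identical.
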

	
	\begin{proof}
		It is enough to prove the proposition in the case \linebreak[4] $N_1(\FF/\F_q)-(q+1) = g[2\sqrt{q}]-k$ which means $k = g[2\sqrt{q}] - \sum_{i=1}^{2g} \alpha_i$ (by \eqref{defect}).\\
		Let $ \kappa : \Z[X] \rightarrow \Z $ be the map defined by 
		$$\kappa(b_0 X^n - b_1 X^{n-1}+ \ldots + b_n) = b_1 - n $$
		and $P \in \Z[X]$ be the polynomial 
		$$P(X) =  X^g - a_1 X^{g-1} + \ldots + a_g = \prod_{i=1}^{g} (X-[2\sqrt{q}]-1+\alpha_i+\overline{\alpha_i}) $$
		its roots are real positive since $[2\sqrt{q}]+1 \geq \alpha_i+\overline{\alpha_i} = 2\sqrt{q}\cdot cos(\phi_i)$ with $\phi_i$ the argument of $\alpha_i$.
		then 
		$$\kappa(P(X)) = \sum_{i=1}^{g} ([2\sqrt{q}]+1-\alpha_i-\overline{\alpha_i}) - g = g [2\sqrt{q}]+g- \sum_{i=1}^{g}(\alpha_i+\overline{\alpha_i}) - g =k  $$
		Now, let $P(X) = \prod_{\lambda=1}^{r} Q_{\lambda}(X)= \prod_{\lambda=1}^{r}(X^{deg(Q_\lambda)}-a_{1,\lambda}X^{deg(Q_\lambda)-1}+\ldots)$ be the decomposition of $P$ in a product of irreducible polynomials. We have 
		$$ a_1  =  \sum_{\lambda=1}^{r} a_{1,\lambda}  $$
		and 
		$$ a_1 - g =  \sum_{\lambda=1}^{r} a_{1,\lambda} - \sum_{\lambda = 1}^{r} deg(Q_\lambda)$$
		thus
		\begin{equation}
			\kappa(P(X)) = \sum_{\lambda=1}^{r} \kappa(Q_\lambda(X)) = k. \label{DefPDefC}
		\end{equation}
		
		\begin{enumerate}[label={(\alph*)}]
			\item Let $i \in \{1, \ldots, g \}$ such that $ \alpha_i+\overline{\alpha_i} = [2\sqrt{q}] - k $, then \linebreak[4] $[2\sqrt{q}]+1-\alpha_i-\overline{\alpha_i} \in \Z$ which means that there exists $\lambda' \in \{1, \ldots, r \}$ such that $Q_{\lambda'}(X) = X - [2\sqrt{q}]-1+\alpha_i+\overline{\alpha_i} $ then $d([2\sqrt{q}]+1-\alpha_i-\overline{\alpha_i}) = 1$ and $Tr([2\sqrt{q}]+1-\alpha_i-\overline{\alpha_i})= [2\sqrt{q}]+1-\alpha_i-\overline{\alpha_i}$. By \eqref{DefPDefC}
			\begin{eqnarray*}
				\kappa(P(X)) & = & \sum_{\lambda=1}^{r} \kappa(Q_\lambda(X))\\
				& = & \sum\limits_{\substack{\lambda =1 \\ \lambda \neq \lambda'}}^{r} \kappa(Q_\lambda(X)) + \kappa(Q_{\lambda'(X)}) \\
				& = & \sum\limits_{\substack{\lambda =1 \\ \lambda \neq \lambda'}}^{r} \kappa(Q_\lambda(X)) +[2\sqrt{q}]+1-\alpha_i-\overline{\alpha_i} -1 \\
				& = & \sum\limits_{\substack{\lambda =1 \\ \lambda \neq \lambda'}}^{r} \kappa(Q_\lambda(X)) +[2\sqrt{q}]- [2\sqrt{q}] + k \\
				& = & \sum\limits_{\substack{\lambda =1 \\ \lambda \neq \lambda'}}^{r} \kappa(Q_\lambda(X)) + k = k
			\end{eqnarray*} 
			Notice that if $[2\sqrt{q}]+1-\alpha_j-\overline{\alpha_j}$ is a root of $Q_\lambda$ ($\lambda \neq \lambda'$) then with the notation of Lemma \ref{RelDefTr} one has 
			
			\begin{align*}
				\kappa(Q_\lambda(X)) &= Tr([2\sqrt{q}] + 1 - \alpha_j - \overline{\alpha_j}) 
				- d([2\sqrt{q}] + 1 - \alpha_j - \overline{\alpha_j}) \\
				&= \mathsf{k}([2\sqrt{q}] + 1 - \alpha_j - \overline{\alpha_j}) \geq 0
			\end{align*}

			we conclude that $\kappa( Q_\lambda(X)) = \mathsf{k}([2\sqrt{q}]+1-\alpha_j-\overline{\alpha_j}) = 0$ for $\lambda \neq \lambda'$. By Lemma \ref{RelDefTr}, one has $[2\sqrt{q}]+1-\alpha_j-\overline{\alpha_j} = 1$ thus $[2\sqrt{q}]=\alpha_j+\overline{\alpha_j}$ for $j \neq i$ and by assumption $ \alpha_i+\overline{\alpha_i} = [2\sqrt{q}] - k $. Finally, the argument $\theta_i$ with $2\sqrt{q} \cdot cos(\theta_i)=\alpha_i+\overline{\alpha_i}=[2\sqrt{q}] - k$ exists if $0 \leq k \leq 2 \cdot [2\sqrt{q}]$. We are interested, here, by $3 \leq k \leq 2 \cdot [2\sqrt{q}]$ since the cases $0$, $1$ et $2$ were studied in \cite{serre}*{Theorem 2.5.1}.
			\item Let $\beta_i=[2\sqrt{q}]+1-\alpha_i-\overline{\alpha_i}$ and $\beta_j=[2\sqrt{q}]+1-\alpha_j-\overline{\alpha_j}$.\\
			There exists $\lambda' \in \{1, \ldots, r \}$ such that
			$$Q_{\lambda'}(X) = X^2 - a_{1,\lambda'} X + a_{2,\lambda'}$$ with  $$a_{1,\lambda'}=Tr(\beta_i)=Tr(\beta_j)=2[2\sqrt{q}]+2-\alpha_i-\overline{\alpha_i}-\alpha_j-\overline{\alpha_j}   $$
			By \eqref{DefPDefC}, one has
			\begin{eqnarray*}
				\kappa(P(X)) & = & \sum_{\lambda=1}^{r} \kappa(Q_\lambda(X))\\
				& = & \sum\limits_{\substack{\lambda =1 \\ \lambda \neq \lambda'}}^{r} \kappa(Q_\lambda(X)) + \kappa(Q_{\lambda'(X)}) \\
				& = & \sum\limits_{\substack{\lambda =1 \\ \lambda \neq \lambda'}}^{r} \kappa(Q_\lambda(X)) +2[2\sqrt{q}]+2-\alpha_i-\overline{\alpha_i}-\alpha_j-\overline{\alpha_j} -2 \\
				& = & \sum\limits_{\substack{\lambda =1 \\ \lambda \neq \lambda'}}^{r} \kappa(Q_\lambda(X)) +2[2\sqrt{q}]-2[2\sqrt{q}] + k \\
				& = & \sum\limits_{\substack{\lambda =1 \\ \lambda \neq \lambda'}}^{r} \kappa(Q_\lambda(X)) + k = k
			\end{eqnarray*} 
			As in a), we conclude that $\kappa( Q_\lambda(X)) = \mathsf{k}([2\sqrt{q}]+1-\alpha_j-\overline{\alpha_j}) = 0$ for $\lambda \neq \lambda'$ and since $d(\beta_i)=d(\beta_j) = 2$, by Lemma \ref{RelDefTr} one has
			$$\beta_i = \frac{\mathsf{k}(\beta_i)+2 - \sqrt{(\mathsf{k}(\beta_i)+2)^2-4n}}{2} = \frac{k+2 + \sqrt{(k+2)^2-4n}}{2} $$
			and 
			$$\beta_j = \frac{\mathsf{k}(\beta_j)+2 - \sqrt{(\mathsf{k}(\beta_j)+2)^2-4n}}{2} = \frac{k+2 - \sqrt{(k+2)^2-4n}}{2} $$
			thus $\alpha_i+\overline{\alpha_i}= [2\sqrt{q}]+1-\frac{k+2 + \Delta}{2}$ and $\alpha_j+\overline{\alpha_j}= [2\sqrt{q}]+1-\frac{k+2 - \Delta}{2}$.\\
			Finally, the arguments $\theta_i$, $\theta_j$ with $2\sqrt{q} \cdot cos(\theta_i)=\alpha_i+\overline{\alpha_i}$ and \linebreak[4] $2\sqrt{q} \cdot cos(\theta_j)=\alpha_j+\overline{\alpha_j}$ exist if
			$$ -1 \leq \frac{[2\sqrt{q}]+1-\frac{k+2 \pm \Delta}{2}}{2\sqrt{2}} \leq 1 $$
			which means 
			$$ -2(2\sqrt{q}-[2\sqrt{q}]) \leq k \pm \Delta \leq 4\sqrt{q}+2[2\sqrt{q}].$$
		\end{enumerate}

	\end{proof}
	
	\begin{remark}
		The conditions of Theorem \ref{SCaseN} are too constraining but they describe a possibly infinitely many curves if we refer to the following theorem (\cite{serre}*{Theorem 2.4.1}). 
	\end{remark}
	
	\begin{theorem} \label{ExiCur}
		Let $\{ \alpha_1+\overline{\alpha_1},\ldots, \alpha_g+\overline{\alpha_g} \}$ be a set of algebraic integers. Suppose that the polynomial $\prod_{i=1}^{g}(X-\alpha_i-\overline{\alpha_i})$ can be factored as $P_1 \cdot P_2$ such that $P_1$ and $P_2$ are monic, non constant polynomials in $\Z[X]$ and their resultant is equal to $1$ or $-1$. Then the $(\alpha_i)_{1 \leq i \leq g}$ can not come from a curve. 
		
	\end{theorem}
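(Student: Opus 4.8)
The plan is to argue by contradiction, converting the resultant hypothesis into an \emph{honest} splitting of the Jacobian as a principally polarized abelian variety and then invoking the indecomposability of Jacobians. Suppose the $\alpha_i$ do come from a curve $\CC/\F_q$, and let $A=\Jac(\CC)$, a principally polarized abelian variety of dimension $g$ over $\F_q$ whose Frobenius endomorphism $\pi$ has characteristic polynomial $\prod_{i=1}^{2g}(t-\alpha_i)$. I would write $V=q\pi^{-1}$ for the Verschiebung, which is a genuine endomorphism since $\pi V=V\pi=q$, and set $\sigma=\pi+V\in\mathrm{End}(A)$, the real Frobenius. Because the polarization is principal, the Rosati involution sends $\pi\mapsto V$, so $\sigma$ is Rosati-fixed (``symmetric''); and since $\sigma$ acts semisimply with eigenvalues $\alpha_i+\overline{\alpha_i}$, it is annihilated by the real Weil polynomial $h(X)=\prod_{i=1}^{g}(X-\alpha_i-\overline{\alpha_i})=P_1(X)P_2(X)$, that is $h(\sigma)=0$. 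The first task is thus only to record that $\sigma$ is an integral symmetric endomorphism killed by $h=P_1P_2$.

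Next I would exploit the hypothesis that the resultant of $P_1,P_2$ is $\pm1$. The resultant always lies in the ideal $(P_1,P_2)\subset\Z[X]$, so there exist $u,v\in\Z[X]$ with $uP_1+vP_2=\mathrm{Res}(P_1,P_2)=\pm1$; after adjusting signs this is a genuine B\'ezout identity $uP_1+vP_2=1$ over $\Z[X]$. Substituting $\sigma$ and setting $e=v(\sigma)P_2(\sigma)\in\Z[\sigma]\subset\mathrm{End}(A)$, one has $1-e=u(\sigma)P_1(\sigma)$ and hence $e(1-e)=u(\sigma)v(\sigma)\,P_1(\sigma)P_2(\sigma)=u(\sigma)v(\sigma)\,h(\sigma)=0$, so $e$ is a genuine idempotent in $\mathrm{End}(A)$. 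Being a polynomial in the symmetric $\sigma$ (a commutative subring on which Rosati is the identity), $e$ is itself Rosati-symmetric. A symmetric integral idempotent yields an orthogonal decomposition $A\cong A_1\times A_2$ as principally polarized abelian varieties, with $A_1=eA$ and $A_2=(1-e)A$; comparing characteristic polynomials of $\pi$ (the factor on $A_i$ is $t^{\deg P_i}P_i(t+q/t)$) shows $\dim A_i=\deg P_i>0$, so both factors are nontrivial because $P_1,P_2$ are non-constant.

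The contradiction then comes from the structure theory of polarized abelian varieties: the canonically polarized Jacobian of a curve is indecomposable, because its theta divisor is irreducible. Hence $A$ cannot split as a product of two lower-dimensional principally polarized abelian varieties, contradicting the decomposition just produced; therefore the $(\alpha_i)$ cannot arise from a curve. The point I expect to be the main obstacle is precisely the passage from the \emph{integral symmetric} idempotent to an orthogonal product decomposition of the \emph{polarization} (not merely an isogeny decomposition), together with the clean invocation of the indecomposability of Jacobians; once these two structural inputs are secured, the algebra with the resultant and the idempotent is routine.
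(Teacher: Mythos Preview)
The paper does not give its own proof of this statement; it is quoted directly from Serre's lecture notes (Theorem~2.4.1 there) as a black-box tool for ruling out certain real Weil polynomials. Your argument is precisely Serre's: the unit resultant yields a B\'ezout relation $uP_1+vP_2=1$ over $\Z[X]$, hence a nontrivial Rosati-symmetric idempotent $e=v(\sigma)P_2(\sigma)\in\Z[\sigma]\subset\mathrm{End}(\Jac\CC)$ with $\sigma=\pi+V$, which splits the Jacobian as a product of two lower-dimensional principally polarized abelian varieties and contradicts the irreducibility of the theta divisor. The two inputs you single out as the main obstacles---that a symmetric integral idempotent gives an honest product decomposition compatible with the principal polarization, and that a Jacobian is indecomposable as a p.p.a.v.---are exactly the structural facts Serre invokes, and both are standard. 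So your proposal is correct and coincides with the argument the paper is citing.
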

	
	Now, we have enough information to calculate the sum  $ a_g + 2 \sum_{i=0}^{g-1} a_i $ using \eqref{NiFoncAl} and Proposition \ref{RelRec}, in order to apply Corollary \ref{CNS}. Therefor, under the conditions of \eqref{CEx}, Theorem \ref{SCaseN} case a) and Theorem \ref{ExiCur}, we can build the following table: \\

	\begin{table}[htb]
		\centering
		\hfill
		\begin{tabular}{|C{0.5cm}|C{0.5cm}|C{0.5cm}|C{1.5cm}|}
			\hline q & g & k &   $\mathcal{E}_{g-1}$ \\
			\hline \hline 2 & 3 &  3 & True \\
			\hline 2 & 3 & 4 &  True \\
			\hline
		\end{tabular}
		\hfill
		\begin{tabular}{|C{0.5cm}|C{0.5cm}|C{0.5cm}|C{1.5cm}|}
			\hline q & g & k &   $\mathcal{E}_{g-1}$ \\
			\hline\hline 3 & 3 & 5 &  True \\
			\hline 3 & 3 & 6 &  True \\
			\hline
		\end{tabular}
		\hfill\null
		\caption{}
		\label{Tab}
	\end{table}

	and under the conditions of \eqref{CEx}, Theorem \ref{SCaseN} case b) and Theorem \ref{ExiCur}, we can build the following table: 
	
	\newpage
	
	\begin{table}[htb]
		\centering
		\hfill
		\begin{tabular}{|C{0.5cm}|C{0.5cm}|C{0.5cm}|C{0.5cm}|C{1.5cm}|}
			\hline q & g & k &  n &  $\mathcal{E}_{g-1}$ \\
			\hline\hline 2 & 3 & 3 & 1 & False  \\
			\hline 2 & 3 & 3 & 2 & True  \\
			\hline 2 & 3 & 3 & 4 & True \\
			\hline 2 & 3 & 3 & 6 & False  \\
			\hline 2 & 3 & 4 & 1 & True  \\
			\hline 2 & 3 & 4 & 2 & False  \\
			\hline 2 & 3 & 4 & 3 & False  \\
			\hline 2 & 3 & 4 & 5 & True  \\
			\hline 2 & 3 & 4 & 7 & True  \\
			\hline 2 & 3 & 4 & 8 & True  \\
			\hline 2 & 3 & 4 & 9 & True  \\
			\hline 2 & 3 & 5 & 8 & True  \\
			\hline 2 & 4 & 5 & 8 & True  \\
			\hline 2 & 3 & 5 & 9 & True  \\
			\hline 2 & 4 & 5 & 9 & True  \\
			\hline 2 & 3 & 5 & 10 & True  \\
			\hline 2 & 4 & 5 & 10 & True  \\
			\hline 2 & 3 & 5 & 11 & True  \\
			\hline 2 & 4 & 5 & 11 & True  \\
			\hline 2 & 3 & 5 & 12 & True  \\
			\hline 2 & 4 & 5 & 12 & True  \\
			\hline 2 & 3 & 6 & 13 & True  \\
			\hline 2 & 4 & 6 & 13 & True  \\
			\hline 2 & 3 & 6 & 14 & True  \\
			\hline 2 & 4 & 6 & 14 & True  \\
			\hline 2 & 3 & 6 & 15 & True  \\
			\hline 2 & 4 & 6 & 15 & True  \\
			\hline 2 & 3 & 6 & 16 & True  \\
			\hline 2 & 4 & 6 & 16 & True   \\
			\hline 2 & 3 & 7 & 19 & True  \\
			\hline 2 & 4 & 7 & 19 & True  \\
			\hline 2 & 5 & 7 & 19 & True  \\
			\hline 2 & 3 & 7 & 20 & True  \\
			\hline 2 & 4 & 7 & 20 & True  \\
			\hline 2 & 5 & 7 & 20 & True  \\
			\hline 2 & 3 & 8 & 25 & True  \\
			\hline 2 & 4 & 8 & 25 & True  \\
			\hline 
		\end{tabular}
		\hfill
		\begin{tabular}{|C{0.5cm}|C{0.5cm}|C{0.5cm}|C{0.5cm}|C{1.5cm}|}
			\hline q & g & k &  n &  $\mathcal{E}_{g-1}$ \\
			\hline\hline 2 & 5 & 8 & 25 & True  \\
			\hline 3 & 3 & 5 & 4 & True  \\
			\hline 3 & 3 & 5 & 6 & True \\
			\hline 3 & 3 & 5 & 8 & True \\
			\hline 3 & 3 & 5 & 9 & False \\
			\hline 3 & 3 & 5 & 10 & False \\
			\hline 3 & 3 & 5 & 11 & False \\
			\hline 3 & 3 & 5 & 12 & False \\
			\hline 3 & 3 & 6 & 4 & False \\
			\hline 3 & 3 & 6 & 5 & False \\
			\hline 3 & 3 & 6 & 7 & True \\
			\hline 3 & 3 & 6 & 9 & True \\
			\hline 3 & 3 & 6 & 10 & True \\
			\hline 3 & 3 & 6 & 11 & True \\
			\hline 3 & 3 & 6 & 12 & True \\
			\hline 3 & 3 & 6 & 13 & True \\
			\hline 3 & 3 & 6 & 14 & True \\
			\hline 3 & 3 & 6 & 15 & True \\
			\hline 3 & 3 & 6 & 16 & True \\
			\hline 3 & 3 & 7 & 12 & False \\
			\hline 3 & 3 & 7 & 13 & False \\
			\hline 3 & 3 & 7 & 14 & False \\
			\hline 3 & 3 & 7 & 15 & False \\
			\hline 3 & 3 & 7 & 16 & True \\
			\hline 3 & 3 & 7 & 17 & True \\
			\hline 3 & 3 & 7 & 18 & True \\
			\hline 3 & 3 & 7 & 19 & True \\
			\hline 3 & 3 & 7 & 20 & True \\
			\hline 3 & 3 & 8 & 19 & True \\
			\hline 3 & 4 & 8 & 19 & True \\
			\hline 3 & 3 & 8 & 20 & False \\
			\hline 3 & 4 & 8 & 20 & False \\
			\hline 3 & 3 & 8 & 21 & True \\
			\hline 3 & 4 & 8 & 21 & True \\
			\hline 3 & 3 & 8 & 22 & True \\
			\hline 3 & 4 & 8 & 22 & True \\
			\hline 3 & 3 & 8 & 23 & True \\
			\hline
		\end{tabular}
		\hfill\null
	\end{table}
	
	\begin{table}[htb]
		\centering
		\hfill
		\begin{tabular}{|C{0.5cm}|C{0.5cm}|C{0.5cm}|C{0.5cm}|C{1.5cm}|}
			\hline q & g & k &  n &  $\mathcal{E}_{g-1}$ \\
			\hline \hline 3 & 4 & 8 & 23 & True \\
			\hline 3 & 3 & 8 & 24 & True \\
			\hline 3 & 4 & 8 & 24 & True \\
			\hline 3 & 3 & 8 & 25 & True \\
			\hline 3 & 4 & 8 & 25 & True \\
			\hline 3 & 3 & 9 & 27 & True \\
			\hline 3 & 4 & 9 & 27 & True \\
			\hline 3 & 3 & 9 & 28 & False \\
			\hline 3 & 4 & 9 & 28 & False \\
			\hline 3 & 3 & 9 & 29 & True \\
			\hline 3 & 4 & 9 & 29 & True \\
			\hline 3 & 3 & 9 & 30 & True \\
			\hline 3 & 4 & 9 & 30 & True \\
			\hline
		\end{tabular}
		\hfill
		\begin{tabular}{|C{0.5cm}|C{0.5cm}|C{0.5cm}|C{0.5cm}|C{1.5cm}|}
			\hline q & g & k &  n &  $\mathcal{E}_{g-1}$ \\
			\hline \hline 3 & 3 & 10 & 34 & True \\
			\hline 3 & 4 & 10 & 34 & True \\
			\hline 3 & 3 & 10 & 35 & True \\
			\hline 3 & 4 & 10 & 35 & True \\
			\hline 3 & 3 & 10 & 36 & True \\
			\hline 3 & 4 & 10 & 36 & True \\
			\hline 3 & 3 & 11 & 42 & True \\
			\hline 3 & 4 & 11 & 42 & True \\
			\hline 3 & 5 & 11 & 42 & True \\
			\hline 3 & 3 & 12 & 49 & True \\
			\hline 3 & 4 & 12 & 49 & True \\
			\hline 3 & 5 & 12 & 49 & True \\
			\hline
		\end{tabular}
		\hfill\null
		\caption{}
		\label{Tab}
	\end{table}

	where $g$ is the genus, $k$ the defect and $n$ the integer defined in Lemma \ref{RelDefTr}. \\
	
	 In the table provided, we present the signs of the sum $ a_g + 2 \sum_{i=0}^{g-1} a_i$ for curves with defect \( k > 2 \). These results were obtained through direct computation, without a formal proof. However, it is worth noting that these signs can be theoretically demonstrated using the same method developed in \cite{ko} for curves with defect 2. That method relies on an explicit form of the coefficients of the L-polynomial, which provides a rigorous framework for extending these results to higher defect values, should one wish to pursue a theoretical proof.

	\section{Construction of non-special divisors of degree $g$ and $g-1$}
	
	\subsection{Example on Kummer extension and hermitian curves}
	
	This section presents the work of E. Camps Moreno, H. H. Lopez and G. L. Matthews in \cite{caloma} which is based on the Weierstrass semigroup to find the explicit form of non-special divisors of degree $g$ and $g-1$ on Kummer extensions and Hermitian curves.
	
	For a curve $\CC$ defined by $f(y) = g(x)$ over a finit field $\F_q$, we denote $P_{ab}$ a point on $\CC$ corresponding to $x=a$ and $y=b$. If $\CC$ has a unique point at infinity, we note it by $P_{\infty}$.\\
	
	We say that a divisor $A$ is supported by a point $P \in \CC(\F_q)$ if and only if $v_p(A) \neq 0$.
	
	Given $m$ distinct rational points $P_1, \dots, P_m$ on a curve $\CC$, the Weierstrass semigroup of the $m$-tuple is 
	$$H(P_1, \dots, P_m) := \{ \alpha \in \mathbb{N}^m : \exists f \in \CC(\F_q))\ with\ (f)_{\infty} = \sum_{i=1}^{m} \alpha_i P_i \}$$ 
	
	Equivalently, $\alpha \in H(P_1, \dots, P_m)$ if and only if 
	$$ dim(\sum_{i=1}^m \alpha_i P_i) = dim(\sum_{i=1}^m \alpha_i P_i - P_j) + 1$$
	
	For all $j$, $ 1 \leq j \leq m$. The Weierstrass gap of $(P_1, \dots, P_m)$ is
	$$ G(P_1, \dots, P_m) := \mathbb{N}^m \setminus H(P_1, \dots, P_m)$$
	
	The multiplicity of the semigroup $H(P)$, where $P \in \CC(\F_q)$ is 
	
	$$ \gamma (H(P)) := min\{a : a \in H(P) \setminus \{0\}\}.$$
	
	We define a partial order $ \preceq$  on $\mathbb{N}^m$ by $(n_1, \dots, n_m) \preceq (p_1, \dots, p_m)$ if and only if $n_i \leq p_i$ for all $i, 1 \leq i \leq m$.
	
	For $i$, $1 \leq i \leq m$, let $\Gamma^+(P_i) := H(P_i)$, and for $l \geq 2$, let 
	
	\begin{multline}
		\Gamma^+(P_1, \dots, P_m):= \{ v \in \mathbb{Z}^{+l}: v\ is\ minimal\ in\ \{w \in H(P_{i_1}, \dots, P_{i_l}): v_i = w_i \}\\
		for\ some\ i, 1 \leq i \leq l \} 
	\end{multline}
	
	For each $I \subseteq \{1, \dots, m \}$ let $\iota_I$ denote the natural inclusion $ \mathbb{N}^l \to \mathbb{N}^m$ into the coordinate indexed by $I$. The minimal generating set of $H(P_1, \dots, P_m)$ is
	
	$$ \Gamma(P_1, \dots, P_m) := \bigcup_{l=1}^m \bigcup_{\substack{I=\{i_1, \dots, i_l \} \\ i_1< \dots<i_l}} \iota_I(\Gamma^+(P_{i_1}, \dots, P_{i_l}))$$
	
	The Weierstrass semigroup is completely determined by the minimal generating set $\Gamma(P_1, \ldots, P_m)$: if $1 \leq m < \#\F_q $, then
	\begin{equation}
		H(P_1, \ldots, P_m)=\{ lub \{ v_1, \ldots, v_m \}:v_1, \ldots, v_m \in \Gamma(P_1, \ldots, P_m) \}. \label{Wsg}
	\end{equation}
	
	with:
	$$ lub \{ v^{(1)}, \ldots, v^{(t)} \} := (max \{v^{(1)}_1, \ldots, v^{(t)}_1 \}, \ldots, max\{ v^{(1)}_n, \ldots, v^{(t)}_n \}).$$
	The \emph{least upper bound} of $v^{(1)}, \ldots, v^{(t)} \in \mathbb{N}^n$.

	\begin{proposition} \label{WsgToNs}
		Let $A= \sum_{i=1}^m \alpha_i P_i$ be an effective divisor of degree $g$. If $\gamma \nleq \alpha$ for all $\gamma \in \Gamma(P_1, \dots, P_m)$, then $A$ is non-special.
	\end{proposition}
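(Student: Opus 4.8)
The plan is to show that the hypothesis $\gamma \nleq \alpha$ for all $\gamma \in \Gamma(P_1,\dots,P_m)$ forces $\dim(A)=1$, which by assertion 1 of the Lemma following Proposition \ref{BR} is exactly the condition for an effective divisor $A$ of degree $g$ to be non-special. Since $A \geq 0$, we already have $\F_q \subseteq \mathcal{L}(A)$, hence $\dim(A)\geq 1$; so the whole content is the reverse inequality $\dim(A)\leq 1$, equivalently $\mathcal{L}(A)=\F_q$. I would prove this by contradiction: suppose there is a nonconstant $f \in \mathcal{L}(A)$ and derive a semigroup element that is $\preceq \alpha$.

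The key idea is to relate a nonconstant function in $\mathcal{L}(A)$ to the Weierstrass semigroup $H(P_1,\dots,P_m)$. If $f \in \mathcal{L}(A)$ is nonconstant, then $(f)_\infty \leq A$, so the pole divisor is supported on (a subset of) $\{P_1,\dots,P_m\}$ and $(f)_\infty = \sum_{i=1}^m \beta_i P_i$ with $0 \leq \beta_i \leq \alpha_i$, i.e. $\beta \preceq \alpha$. By definition of the Weierstrass semigroup this exponent vector $\beta$ lies in $H(P_1,\dots,P_m)$, and since $f$ is nonconstant $\beta \neq 0$. Then I would invoke the characterization of $H$ in terms of the minimal generating set, equation \eqref{Wsg}: every nonzero element of $H(P_1,\dots,P_m)$ is a least upper bound of vectors from $\Gamma(P_1,\dots,P_m)$, so there exists some $\gamma \in \Gamma(P_1,\dots,P_m)$ with $\gamma \preceq \beta$ (each coordinate of a generator contributing to the $\mathrm{lub}$ is bounded by the corresponding coordinate of that $\mathrm{lub}$). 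Combining $\gamma \preceq \beta \preceq \alpha$ yields $\gamma \leq \alpha$, contradicting the hypothesis.

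The step I expect to be the main obstacle is extracting, from the representation of $\beta$ as a $\mathrm{lub}$ of generators, a single generator $\gamma$ with $\gamma \preceq \beta$. The subtlety is that $\mathrm{lub}\{v^{(1)},\dots,v^{(t)}\}$ takes coordinatewise maxima, so although each $v^{(j)} \preceq \beta$ is automatic from being a component of the maximum (any vector is coordinatewise at most its own $\mathrm{lub}$ with others only when the others do not exceed it — one must check the generators entering the $\mathrm{lub}$ are themselves $\preceq \beta$), one must verify that at least one genuine generator satisfies the full partial-order inequality. In fact each $v^{(j)}$ appearing in the $\mathrm{lub}$ equal to $\beta$ satisfies $v^{(j)}_i \leq \max_k v^{(k)}_i = \beta_i$ for every $i$, so $v^{(j)} \preceq \beta$ holds for all of them, and any one serves as the desired $\gamma$. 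I would also need the mild hypothesis $1 \leq m < \#\F_q$ implicit in \eqref{Wsg} to apply that characterization; assuming it holds, the argument closes.
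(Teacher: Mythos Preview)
Your proposal is correct and follows essentially the same approach as the paper: argue by contrapositive, use $\dim(A)\geq 2$ to produce a nonzero element $\beta\in H(P_1,\dots,P_m)$ with $\beta\preceq\alpha$, invoke \eqref{Wsg} to write $\beta$ as a $\mathrm{lub}$ of elements of $\Gamma(P_1,\dots,P_m)$, and observe that any such element is $\preceq\beta\preceq\alpha$. Your version is slightly more explicit in explaining why such a $\beta$ exists (via the pole divisor of a nonconstant $f\in\mathcal{L}(A)$), whereas the paper simply asserts the existence of $w\in H(P_1,\dots,P_m)$ with $w\leq\alpha$; otherwise the arguments are the same.
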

	
	\begin{proof}
		Let $A = \sum_{i=1}^{m} \alpha_i P_i \in {\div}(\FF/\F_q)$ be an effective divisor and $ \sum_{i=1}^{m} \alpha_i=g$. Suppose $A$ is special. Then $dim(A) > deg(A)+1-g=1$. Since $dim(A) \geq 2$, there exists $w \in H(P_1, \ldots, P_m)$ such that $w \leq \alpha$. According to \eqref{Wsg}, there exist $v_1,\ldots,v_m \in \Gamma(P_1, \ldots, P_m)$ satisfying $v_1 \leq lub\{v_1, \ldots, v_m \}= w \leq \alpha$, the contrapositive of the statement completes the proof.
	\end{proof}

	\begin{proposition} \label{WsgKe}
		Let $\FF/\F_q(y)$ be the Kummer extension defined by
		$$ x^m = \prod_{i=1}^{r}(y-\alpha_i)$$
		and let $P_i$ the place associated with $y-\alpha_i$. Then
		$$\Gamma^+(P_1)= \mathbb{N} \setminus \left\{ mk+j | 1 \leq j \leq m-1- \left\lfloor\frac{m}{r}\right\rfloor, 0 \leq k \leq r-2-\left\lfloor\frac{rj}{m}\right\rfloor \right\}$$
		and for $2 \leq l \leq r-\left\lfloor\frac{r}{m}\right\rfloor$ , $\Gamma^+(P_1, \ldots, P_l)$ is given by 
		$$ \left\{ (ms_1+j, \ldots, ms_l+j) | 1 \leq j \leq m-1 \left\lfloor\frac{m}{r}\right\rfloor, s_i \geq 0, \sum_{i=1}^{l}s_i=r-l- \left\lfloor\frac{rj}{m}\right\rfloor \right\} $$
	\end{proposition}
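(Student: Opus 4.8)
The plan is to reduce everything to the pole behaviour of the monomials $x^{a}\prod_{i}(y-\alpha_i)^{-b_i}$, after first pinning down the divisors of the coordinate functions. Since the $\alpha_i$ are the distinct (simple) roots of $\prod_{i=1}^{r}(y-\alpha_i)$ and the extension is Kummer of degree $m$, each place $y=\alpha_i$ is totally ramified, so $P_i$ is the unique place above it and $x$ is a local uniformizer at every $P_i$: one has $v_{P_i}(y-\alpha_i)=m$, $v_{P_i}(y-\alpha_j)=0$ for $j\neq i$, and $v_{P_i}(x)=1$ (because $m\,v_{P_i}(x)=v_{P_i}(x^m)=v_{P_i}(y-\alpha_i)=m$). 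Writing $d=\gcd(m,r)$, there are $d$ places $Q_1,\dots,Q_d$ above $y=\infty$, each of ramification index $m/d$, with $v_{Q_s}(x)=-r/d$ and $v_{Q_s}(y-\alpha_i)=-m/d$. Consequently, for $a,b_1,\dots,b_l\ge 0$ the function $h=x^{a}\prod_{i=1}^{l}(y-\alpha_i)^{-b_i}$ satisfies $v_{P_i}(h)=a-mb_i$ for $1\le i\le l$, is regular at every other $P_j$, and is regular at each $Q_s$ precisely when $m\sum_i b_i\ge ar$; thus its pole divisor is supported on $\{P_1,\dots,P_l\}$ with pole orders $(mb_1-a,\dots,mb_l-a)$.

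For the one-point case I would specialize to $x^{a}(y-\alpha_1)^{-b}$ and conclude that every integer $mb-a$ with $0\le a\le\lfloor mb/r\rfloor$ lies in $H(P_1)$, so that $H(P_1)\supseteq\bigcup_{b\ge 0}\bigl[\,mb-\lfloor mb/r\rfloor,\ mb\,\bigr]$. A manipulation of the two defining inequalities then identifies the complement of this union of intervals with $\{\,mk+j:\ 1\le j\le m-1-\lfloor m/r\rfloor,\ 0\le k\le r-2-\lfloor rj/m\rfloor\,\}$. Since the text sets $\Gamma^+(P_1):=H(P_1)$, this is exactly the asserted description, provided I also prove the reverse inclusion, i.e. that these candidate gaps are genuine gaps.

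Establishing this completeness is the crux. The cleanest route is a counting argument: show that the cardinality $\sum_j\bigl(r-1-\lfloor rj/m\rfloor\bigr)$ of the candidate gap set equals the genus $g=\tfrac12\bigl((m-1)(r-1)-d+1\bigr)$ obtained from Riemann--Hurwitz applied to $\FF/\F_q(y)$ (different degree $r(m-1)$ over the $P_i$ plus $m-d$ over infinity). Because $\#(\N\setminus H(P_1))=g$ for any place, the inclusion already established for the non-gap set is forced to be an equality. Alternatively one invokes Maharaj's explicit basis for $\mathcal{L}(nP_1)$ on Kummer extensions and reads off $\dim\mathcal{L}(nP_1)-\dim\mathcal{L}((n-1)P_1)\in\{0,1\}$ directly.

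For the multi-point set I would rewrite the pole-tuple $(mb_1-a,\dots,mb_l-a)$ in the residue form $(ms_1+j,\dots,ms_l+j)$, where $j$ is the common residue mod $m$; taking the minimal admissible $a=m-j$ for a fixed residue $1\le j\le m-1$ turns the infinity condition $m\sum_i b_i\ge ar$ into $\sum_i s_i\ge r-l-\lfloor rj/m\rfloor$, and the minimal generators of $\Gamma^+(P_1,\dots,P_l)$ arise exactly at equality, reproducing the stated formula. The main obstacle I anticipate is twofold: the floor bookkeeping pinning down the precise bounds $m-1-\lfloor m/r\rfloor$ and $r-l-\lfloor rj/m\rfloor$ (and the gap-count identity feeding the genus comparison), and verifying that the singled-out tuples are genuinely minimal in the sense of $\Gamma^+$ defined via \eqref{Wsg}, which means ruling out that a tuple of one residue class mod $m$ is dominated by one of another class.
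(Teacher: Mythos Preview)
The paper itself offers no argument here: its ``proof'' is simply a pointer to \cite{camaqu} and \cite{yahu}. Your sketch therefore goes considerably further than the text does, and for the one-point statement it is essentially complete: the monomials $x^{a}(y-\alpha_1)^{-b}$ do exhibit the interval $[\,mb-\lfloor mb/r\rfloor,\,mb\,]$ inside $H(P_1)$, the complement of the union is the displayed set, and the identity $\sum_{j=1}^{m-1}\lfloor rj/m\rfloor=\tfrac{(m-1)(r-1)}{2}$ (valid when $\gcd(m,r)=1$, which is the standing hypothesis used in Theorem~\ref{DivDeggKe}) makes the candidate-gap count equal to $g$, forcing equality.

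For $l\ge 2$ the gap you yourself flag is real and is in fact the substantive content of the cited references. Producing the function $x^{m-j}\prod_{i\le l}(y-\alpha_i)^{-(s_i+1)}$ only shows that the tuples $(ms_1+j,\dots,ms_l+j)$ with $\sum s_i=r-l-\lfloor rj/m\rfloor$ lie in $H(P_1,\dots,P_l)$; it says nothing about minimality, and it does not rule out minimal elements whose coordinates lie in different residue classes mod $m$. Such ``mixed-residue'' tuples can occur in $H$ (a general $f=\sum_{a=0}^{m-1}x^{a}g_a(y)$ may realise a different dominant $a$ at each $P_i$), so excluding them from $\Gamma^+$ requires real work. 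The route taken in \cite{camaqu} and \cite{yahu} is to first determine $H(P_1,\dots,P_l)$ completely---via the decomposition of $\mathcal{L}(\sum n_iP_i)$ along the basis $1,x,\dots,x^{m-1}$ over $\F_q(y)$ and explicit dimension formulas---and only then read off the minimal set using the Kim--Homma/Matthews description. Your outline would need either that full semigroup computation or a separate argument that any $w\in H$ with $w_i=ms_i+j$ must have all coordinates $\equiv j\pmod m$ and total at least $m\bigl(r-l-\lfloor rj/m\rfloor\bigr)+lj$; without one of these the $l\ge 2$ case remains open.
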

	
	\begin{proof}
		See \cite[Theorem 3.2 ]{camaqu} and \cite[Theorem 10]{yahu}
	\end{proof}

	\begin{lemma} \label{ari}
		Let $r,m \in \mathbb{Z}^+$ be relatively prime.
		\begin{enumerate}
			\item Let $1 \leq j \leq m-1$ and set $t=r\ \mbox{mod}\ m$. then
			$$ \left\lfloor\frac{r(j+1)}{m}\right\rfloor - \left\lfloor\frac{rj}{m}\right\rfloor = \left\{\begin{array}{ll}
				\left\lfloor\frac{r}{m}\right\rfloor+1 & j =\left\lfloor\frac{km}{t}\right\rfloor,\ 1\leq k \leq t-1 \\
				\left\lfloor\frac{r}{m}\right\rfloor & \mbox{otherwise.}
			\end{array}
			\right.  $$
			\item If $t<m$, then
			$$ \sum_{k=1}^{t-1}\left\lfloor\frac{km}{t}\right\rfloor = \frac{(m-1)(t-1)}{2}. $$
		\end{enumerate}
	\end{lemma}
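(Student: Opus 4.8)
The plan is to reduce both parts to elementary facts about the step function $j\mapsto\lfloor tj/m\rfloor$, exploiting that $\gcd(m,t)=1$ (which holds because $\gcd(r,m)=1$ and $r\equiv t\pmod m$; note also that this forces $t\ge 1$ whenever the range $1\le j\le m-1$ is nonempty). For the first assertion, write $r=qm+t$ with $q=\lfloor r/m\rfloor$ and $0\le t<m$. Since $qj$ and $q(j+1)$ are integers,
$$\left\lfloor\frac{r(j+1)}{m}\right\rfloor-\left\lfloor\frac{rj}{m}\right\rfloor=q+\left(\left\lfloor\frac{t(j+1)}{m}\right\rfloor-\left\lfloor\frac{tj}{m}\right\rfloor\right),$$
so everything reduces to understanding the increment of $j\mapsto\lfloor tj/m\rfloor$. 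Because $0<t<m$, consecutive values of $tj/m$ differ by less than $1$, hence this increment is either $0$ or $1$; it equals $1$ precisely when the half-open interval $(tj,\,t(j+1)]$ contains a multiple of $m$.

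I would then identify which $j$ trigger a crossing. The multiple $km$ lies in $(tj,\,t(j+1)]$ iff $tj<km\le t(j+1)$, equivalently $km/t-1\le j<km/t$; since $\gcd(m,t)=1$ forces $km/t\notin\Z$ for $1\le k\le t-1$, the unique such integer is $j=\lfloor km/t\rfloor$. Thus the "$+1$'' occurs exactly at the values $j=\lfloor km/t\rfloor$, which yields the stated dichotomy (difference $\lfloor r/m\rfloor+1$ at those $j$, and $\lfloor r/m\rfloor$ otherwise).

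The step I expect to be the main obstacle is the bookkeeping at the upper endpoint. As $j$ runs over $1,\dots,m-1$ the function $\lfloor tj/m\rfloor$ climbs from $0$ to $t$, so there are exactly $t$ increments, whereas the indices $\lfloor km/t\rfloor$ with $1\le k\le t-1$ record only the $t-1$ interior crossings, of $m,2m,\dots,(t-1)m$. The final crossing, of $km=tm$, occurs at the transition $j=m-1\mapsto m$ and so must be tracked separately; one checks that the listed indices $\lfloor km/t\rfloor$ are distinct and all strictly below $m-1$ (indeed $\lfloor (t-1)m/t\rfloor=m-\lceil m/t\rceil\le m-2$), so the boundary value $j=m-1$ has to be handled with care when delimiting the "otherwise'' branch.

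For the second assertion I would use the reflection $k\mapsto t-k$ together with $\gcd(m,t)=1$. For $1\le k\le t-1$ the number $km/t$ is not an integer, so $\lfloor(t-k)m/t\rfloor=m+\lfloor-km/t\rfloor$ and hence $\lfloor km/t\rfloor+\lfloor(t-k)m/t\rfloor=m+\lfloor km/t\rfloor+\lfloor-km/t\rfloor=m-1$. Summing this identity over $k=1,\dots,t-1$, the left-hand side is $2\sum_{k=1}^{t-1}\lfloor km/t\rfloor$, giving $2\sum_{k=1}^{t-1}\lfloor km/t\rfloor=(t-1)(m-1)$ and therefore the claimed value $\tfrac{(m-1)(t-1)}{2}$. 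Equivalently one may write $\lfloor km/t\rfloor=(km-(km\bmod t))/t$ and use that $k\mapsto km\bmod t$ permutes $\{1,\dots,t-1\}$; the reflection argument is cleaner and sidesteps any parity discussion.
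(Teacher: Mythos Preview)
Your arguments for both parts are correct. The paper itself does not prove this lemma---it simply refers to \cite[Lemma~7]{caloma}---so there is no method to compare against; the reduction via $r=qm+t$ together with the integer-crossing analysis for part~(1), and the reflection $k\mapsto t-k$ for part~(2), are the natural elementary proofs.

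Your caution about $j=m-1$ is in fact warranted: the formula as printed fails there. Since $\gcd(r,m)=1$ forces $r/m\notin\Z$ (for $m\ge 2$), one has
\[
\left\lfloor\tfrac{rm}{m}\right\rfloor-\left\lfloor\tfrac{r(m-1)}{m}\right\rfloor
= r-\bigl(r-\lceil r/m\rceil\bigr)=\left\lfloor\tfrac{r}{m}\right\rfloor+1;
\]
yet you verified $\lfloor(t-1)m/t\rfloor\le m-2$, so $j=m-1$ lands in the ``otherwise'' branch, which claims the value $\lfloor r/m\rfloor$. Equivalently, summing the stated increments over $1\le j\le m-1$ gives $(m-1)\lfloor r/m\rfloor+(t-1)$, whereas the telescoped sum is $r-\lfloor r/m\rfloor=(m-1)\lfloor r/m\rfloor+t$. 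This off-by-one is harmless for the application in Theorem~\ref{DivDeggKe}, where the top index is treated separately via the special definition $s_{m-1-\lfloor m/r\rfloor}=l_{m-1-\lfloor m/r\rfloor}-1$; but you are right that the lemma as stated needs either the range trimmed to $1\le j\le m-2$ or the endpoint exception recorded explicitly.
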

	
	\begin{proof}
	See \cite[Lemma 7]{caloma}.
	\end{proof}

	\begin{theorem}\label{DivDeggKe}
		Let $\FF/\F_q(y)$ by the Kummer extension defined by
		
		$$x^m = \prod_{i=1}^r (y-\alpha_i) $$
		
		where $\alpha_i \in \F_q$ and $ (r,m)=1$. For $1 \leq j \leq m-1- \left\lfloor\frac{m}{r}\right\rfloor$, define the following values:
		\begin{itemize}
			\item $l_j = r-\left\lfloor\frac{rj}{m}\right\rfloor$.
			\item $s_j = l_j -l_{j+1}$ if $j < m-1 - \left\lfloor\frac{m}{r}\right\rfloor$ and $s_{m-1 - \left\lfloor\frac{m}{r}\right\rfloor} = l_{m-1 - \left\lfloor\frac{m}{r}\right\rfloor}-1 = max \{1, \left\lfloor\frac{m}{r}\right\rfloor \}$.
			
		\end{itemize}
		Then $A$ is an effective non-special divisor of degree $g$ with support contained in the set $\{ P_{0b}: \prod_{i=1}^r(b-\alpha_i)=0 \}$ if and only if
		
		$$A = \sum_{j=1}^{m-1 - \left\lfloor\frac{m}{r}\right\rfloor} j \sum_{i=1}^{s_j} P_{0b_{j_i}}.$$
		
		In particular, if $r < m$, 
		
		$$A = \sum_{j=1}^{r-1}  \left\lfloor\frac{jm}{r}\right\rfloor P_{0b_j}. $$
	\end{theorem}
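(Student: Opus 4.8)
The plan is to rewrite non-speciality as a condition on the multiplicity vector and then solve it against the degree constraint. Geometry first: since $(r,m)=1$ and each $\alpha_i$ is a simple root of $\prod_i(y-\alpha_i)$, the place $y=\alpha_i$ of $\F_q(y)$ is totally and tamely ramified, so it has a single extension $P_i=P_{0\alpha_i}$, which is rational. Hence a divisor supported on $\{P_{0b}:\prod_i(b-\alpha_i)=0\}$ is $A=\sum_{i=1}^r a_iP_i$ with $a_i\ge0$ and $\deg A=\sum_i a_i$, and by the Hurwitz genus formula $g=\frac{(m-1)(r-1)}{2}$; thus the requirement $\deg A=g$ reads $\sum_i a_i=g$.

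Next I would promote Proposition~\ref{WsgToNs} to an equivalence. Its hypothesis is sufficient for non-speciality; conversely, if some nonzero $\gamma\in\Gamma(P_1,\dots,P_r)$ satisfies $\gamma\preceq(a_1,\dots,a_r)$ then, as $\gamma\in H(P_1,\dots,P_r)$, there is a non-constant $f$ with $(f)_\infty=\sum_i\gamma_iP_i\preceq A$, whence $f\in\mathcal L(A)\setminus\F_q$ and $A$ is special (recall $\deg A=g$). Combined with Proposition~\ref{WsgToNs} this yields: $A$ is non-special if and only if no nonzero element of $\Gamma(P_1,\dots,P_r)$ is $\preceq(a_1,\dots,a_r)$.

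The heart of the argument is to read this off the explicit generating set of Proposition~\ref{WsgKe}. The one-point part $\Gamma^+(P_i)=H(P_i)$ forbids any nonzero element of $H(P_i)$ below $a_i$, i.e. $a_i<\gamma(H(P_i))$; inspecting the gap set in Proposition~\ref{WsgKe} shows the least positive non-gap is $J+1$ with $J:=m-1-\lfloor m/r\rfloor$, so this says exactly $a_i\le J$ for every $i$. In particular every $a_i<m$, and therefore a generator $(ms_1+j,\dots,ms_l+j)$ of $\Gamma^+(P_{i_1},\dots,P_{i_l})$ can satisfy $ms_p+j\le a_{i_p}$ only when every $s_p=0$; by the relation $\sum_p s_p=r-l-\lfloor rj/m\rfloor$ this forces $l=l_j:=r-\lfloor rj/m\rfloor$ and the generator to be the diagonal vector $(j,\dots,j)$, which lies below $\mathbf a$ iff $n_j:=\#\{i:a_i\ge j\}\ge l_j$. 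Writing everything out, non-speciality of $A$ is thus equivalent to
\[
a_i\le J\ (\forall i)\qquad\text{and}\qquad n_j\le l_j-1\ (1\le j\le J).
\]

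Finally I would close with a counting identity: the one-point semigroup $H(P_1)$ has exactly $g$ Weierstrass gaps, and enumerating them through the description in Proposition~\ref{WsgKe} gives $\sum_{j=1}^J(l_j-1)=g$. Under $a_i\le J$ one has $\sum_i a_i=\sum_{j=1}^J n_j\le\sum_{j=1}^J(l_j-1)=g$, so $\sum_i a_i=g$ forces $n_j=l_j-1$ for all $j$ (and $n_j=0$ for $j>J$). Since $j\mapsto n_j$ determines the multiset of the $a_i$, the value $j$ occurs $n_j-n_{j+1}=l_j-l_{j+1}=s_j$ times, which is precisely the shape $A=\sum_{j=1}^{J}j\sum_{i=1}^{s_j}P_{0b_{j_i}}$; conversely this divisor satisfies $a_i\le J$ and $n_j=l_j-1$, hence is non-special, completing the equivalence. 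For the special case $r<m$ (so $\lfloor r/m\rfloor=0$ and $t=r\bmod m=r$), Lemma~\ref{ari}(1) gives $s_v=\lfloor r(v+1)/m\rfloor-\lfloor rv/m\rfloor\in\{0,1\}$, equal to $1$ exactly at $v=\lfloor km/r\rfloor$ for $1\le k\le r-1$, so the multiset $\{j^{s_j}\}$ becomes $\{\lfloor km/r\rfloor:1\le k\le r-1\}$, yielding $A=\sum_{k=1}^{r-1}\lfloor km/r\rfloor P_{0b_k}$. The main obstacle is the third paragraph—turning the multi-indexed generating set of Proposition~\ref{WsgKe} into the two clean inequalities—where the observation that $a_i<m$ collapses every multipoint generator to a diagonal vector $(j,\dots,j)$ is what makes the count explicit.
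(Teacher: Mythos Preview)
Your proof is correct and rests on the same two inputs as the paper—Proposition~\ref{WsgToNs} and the explicit description of $\Gamma^+$ in Proposition~\ref{WsgKe}—but you organize the converse differently, and more cleanly. The paper first computes $\deg A=g$ via Lemma~\ref{ari}, then verifies non-speciality of the displayed $A$ by a direct check against $\Gamma$; for the reverse implication it argues iteratively, bounding $\sum_{j\ge h}\deg(D_j)\le l_h-1$ for each $h$, enlarging the $D_h$ to auxiliary $D'_h$ with $\deg(D'_h)=s_h$, and squeezing via the degree. You instead promote Proposition~\ref{WsgToNs} to an equivalence and translate it into the two inequalities $a_i\le J$ and $n_j\le l_j-1$; the single identity $\sum_{j=1}^J(l_j-1)=g$ (the Weierstrass gap count at $P_1$) then forces $n_j=l_j-1$ in one stroke, handling existence and uniqueness of the multiset simultaneously. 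This is a genuine simplification of the paper's converse argument and makes the role of the genus formula transparent. One small caution: as written, Proposition~\ref{WsgToNs} literally quantifies over all $\gamma\in\Gamma$, including $0\in H(P_i)=\Gamma^+(P_i)$, so both you and the paper are tacitly reading it as ``nonzero $\gamma$''; it would be worth stating that explicitly when you invoke the equivalence.
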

	
	\begin{proof}
		First, note that
		$$ A = \sum_{j=1}^{m-1-\left\lfloor\frac{m}{r}\right\rfloor} j D_j = \sum_{j=1}^{m-1-\left\lfloor\frac{m}{r}\right\rfloor} j \sum_{i=1}^{s_j} P_{0b_{j_i}}$$
		where 
		$$ D_j = \left\{ \begin{array}{ll} \sum_{i=1}^{s_j} P_{0b_{j_i}} & \mbox{if}\ s_j > 0  \\
			0 & \mbox{if}\ s_j = 0 
		\end{array} \right. $$
		for $ 1 \leq j \leq m-1-\left\lfloor\frac{m}{r}\right\rfloor$.
		We will prove that $A$ is of degree $g$. Let $t=r\ \mbox{mod}\ m$, we have
		\begin{align*} deg(A) & = \sum_{i=1}^{m-1-\left\lfloor\frac{m}{r}\right\rfloor} j s_j   \\
			& =  \sum_{i=1}^{m-1} j \left\lfloor\frac{r}{m}\right\rfloor + \sum_{k=1}^{t-1} \left\lfloor\frac{km}{t}\right\rfloor  \\
			& =  \frac{(m-1)m}{2}\left\lfloor\frac{r}{m}\right\rfloor+ \frac{(m-1)(t-1)}{2} \\
			& = \frac{(m-1)}{2} \left( m \left\lfloor\frac{r}{m}\right\rfloor+t-1 \right) \\
			& =  \frac{(m-1)(r-1)}{2} = g   
		\end{align*}  
		where the second equality follows from Lemma \ref{ari} (1) and the third one from Lemma \ref{ari} (2). Therefore, $A$ is effective of degree $g$. Now, we are going to prove that $A$ is non-special using Proposition \ref{WsgToNs}.\\
		Take $v \in \mathbb{N}^{l_1-1}$ such that $A= \sum_{i=1}^{l_1-1} v_i P_i$. Since $v_i \leq m-1-\left\lfloor\frac{m}{r}\right\rfloor$ for all $i$, then by Proposition \ref{WsgKe} $v_i < w$ for any $w \in \Gamma^+(P_i)$, and so
		$$ \iota_{\{i\}}(w) \nleq v $$ 
		for any $w \in \Gamma^+(P_i)$. Take $w \in \Gamma^+(P_{i_j} | j\in I \subset \{1, \ldots, l_1-1 \})$. If for some $i$, $w_i > m-\left\lfloor\frac{m}{r}\right\rfloor$, then $w \nleq v$, so assume $w=(k, \ldots, k)$ for some $1 \leq k \leq m-1-\left\lfloor\frac{m}{r}\right\rfloor$. By Proposition \ref{WsgKe}, we know that $\#I = l_k$ and the number of entries of $v $ greater or equal than $k$ are $\sum_{i=k}^{m-1-\left\lfloor\frac{m}{r}\right\rfloor} s_i=l_k-1$, therefore $\iota_I(w) \nleq v$ for any $I$ of cardinality $l_k$. This concludes that $w \nleq v$ for all $w \in \Gamma(Supp(A))$ and then $A$ is non-special. \\
		Set $\gamma=m-1-\left\lfloor\frac{m}{r}\right\rfloor$ and choose $B$ an effective non-special divisor of degree $g$ supported on $Supp((x))$. If $v_P(B) \geq \gamma + 1$ for some $P$, then $\iota(\gamma +1) \leq B$, contradicting the non-specialty of $B$. \\
		Write $B= \sum_{j=1}^{\gamma }j D_j$ where $D_j$ is zero or is the sum of distinct rational places of degree 1 and $Supp(D_j) \cap Sup(D_h) = \varnothing$ for $j \neq h$. \\
		Observe that $\#Supp(B) \leq l_1-1 <r= \#Supp((x)_0)$. For $D_{\gamma}$ we know
		$$ deg(D_\gamma) \leq l_\gamma -1 = s_\gamma. $$
		Similarly, for $1 \leq h \leq \gamma$, we know
		$$ \sum_{j=h}^{\gamma} deg(D_j) \leq deg(D_h) + \sum_{j=h+1}^{\gamma} deg(D'_j) \leq l_h-1, $$
		so take $D'_h \geq D_h$, $Supp(D_h) \subseteq  Supp((x)_0) \setminus Supp(B+\sum_{j=h+1}^{\gamma} D'_j) $ such that
		$$ \sum_{j=h}^{\gamma} D'_h= \sum_{j=h}^{\gamma} \# Supp(D'_j)= l_h-1. $$
		From this construction, it is clear
		$$ deg(D'_h)=s_h $$
		and so 
		$$ g= deg(B) \leq \sum_{j=1}^{\gamma}j.deg(D'_j)= \sum_{j=1}^{\gamma} j s_j= g. $$
		Then $D'_h=D_h$ for any $1 \leq h \leq \gamma$ and $B$ has the desired form.\\
		In the case $r<m$, By lemma \ref{ari} we have $s_j=1$ if $j= \left\lfloor\frac{km}{r}\right\rfloor$ and $0$ otherwise and then $D_j =P_k $ or $D_j=0$.
	\end{proof}

	\begin{corollary}
		On the norm-trace curve given by $ y^{q^{r-1}} + y^{q^{r-2}} + \dots + y = x^{\frac{q^r-1}{q-1}}$ over $\F_{q^r}$, any effective non-special divisor of degree $g$ supported by points $P_{0b}$ is of the form
		
		$$\sum_{i=1, q \nmid i}^{\frac{q^r-1}{q-1}-2} i P_{0b_i}.$$
	\end{corollary}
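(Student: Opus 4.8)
The plan is to exhibit the norm--trace curve as a special case of the Kummer extension of Theorem \ref{DivDeggKe} and then to rewrite the coefficients that theorem produces. Set $N:=\frac{q^r-1}{q-1}$. The left-hand side $y^{q^{r-1}}+\cdots+y$ is the trace $\mathrm{Tr}_{\F_{q^r}/\F_q}(y)$, a monic additive polynomial of degree $q^{r-1}$; since its derivative is the constant $1$ it is separable, so it splits over $\F_{q^r}$ as $\prod_{\alpha}(y-\alpha)$ over the $q^{r-1}$ distinct elements $\alpha$ of $\ker(\mathrm{Tr}_{\F_{q^r}/\F_q})\subset\F_{q^r}$. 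Hence, over the constant field $\F_{q^r}$, the defining equation is $x^{N}=\prod_{\alpha}(y-\alpha)$, which is exactly the form of Theorem \ref{DivDeggKe} with its parameters $(m,r)$ replaced by $(N,\,q^{r-1})$ and its constant field taken to be $\F_{q^r}$. I would then verify the hypotheses: the $\alpha$ are distinct (separability above), and $\gcd(q^{r-1},N)=1$ because $N\equiv 1\pmod q$. Since also $q^{r-1}<N$, the ``in particular'' clause applies, and the genus is $g=\frac{(N-1)(q^{r-1}-1)}{2}$ (which for $r=2$ is the Hermitian value $\frac{q(q-1)}{2}$, a useful check). The points lying over $x=0$ are precisely the $P_{0b}$ with $\mathrm{Tr}(b)=0$, so Theorem \ref{DivDeggKe} gives that every effective non-special divisor of degree $g$ supported on them equals
\[
A=\sum_{j=1}^{q^{r-1}-1}\Big\lfloor \tfrac{jN}{q^{r-1}}\Big\rfloor\, P_{0b_j}.
\]

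It remains to identify this with the asserted divisor, i.e. to show that the coefficient multiset $\{\,\lfloor jN/q^{r-1}\rfloor : 1\le j\le q^{r-1}-1\,\}$ is exactly the set of integers in $[1,N-2]$ that are not divisible by $q$. Writing $c_j:=\lfloor jN/q^{r-1}\rfloor$, the easy facts come first: since $1<N/q^{r-1}<2$ one has $c_1=1$, $c_{q^{r-1}-1}=N-2$, and $c_{j+1}>c_j$, so the $c_j$ are $q^{r-1}-1$ distinct integers lying in $[1,N-2]$. A direct count using $N=1+q+\cdots+q^{r-1}$ gives $\lfloor (N-2)/q\rfloor=q+q^2+\cdots+q^{r-2}$, whence the number of integers in $[1,N-2]$ not divisible by $q$ equals $(N-2)-\lfloor(N-2)/q\rfloor=q^{r-1}-1$. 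Thus both sets have the same cardinality, and it suffices to prove that no $c_j$ is a multiple of $q$.

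This last point is the crux and the step I expect to be the main obstacle; I would handle it by reduction modulo $q^r$. From $jN=c_jq^{r-1}+d_j$ with $0\le d_j<q^{r-1}$ one gets $jN\bmod q^r=(c_j\bmod q)\,q^{r-1}+d_j$, so $q\mid c_j$ is equivalent to $jN\bmod q^r<q^{r-1}$. Now $(q-1)N=q^r-1$ yields $(q-1)\,(jN\bmod q^r)\equiv -j\pmod{q^r}$. If some $j\in\{1,\dots,q^{r-1}-1\}$ had $u:=jN\bmod q^r<q^{r-1}$ (note $u\neq 0$ since $\gcd(N,q)=1$), then $(q-1)u$ would be a genuine integer in $[\,q-1,\;q^r-q^{r-1}-q+1\,]$, hence strictly below $q^r-q^{r-1}+1$; but being $\equiv -j\pmod{q^r}$ it must equal $q^r-j\in[\,q^r-q^{r-1}+1,\;q^r-1\,]$, a contradiction. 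Therefore $jN\bmod q^r\ge q^{r-1}$ and $q\nmid c_j$ for all $j$. Combined with the cardinality count, the $c_j$ exhaust the non-multiples of $q$ in $[1,N-2]$, so after relabelling the places $A=\sum_{1\le i\le N-2,\ q\nmid i} i\,P_{0b_i}$, as claimed.
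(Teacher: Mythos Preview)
Your proof is correct and follows essentially the same route as the paper: recognize the norm--trace curve as a Kummer extension with parameters $(m,r)=(N,q^{r-1})$, apply Theorem~\ref{DivDeggKe} (the $r<m$ case) to obtain $A=\sum_{j=1}^{q^{r-1}-1}\lfloor jN/q^{r-1}\rfloor P_{0b_j}$, and then identify the coefficient multiset with the non-multiples of $q$ in $[1,N-2]$ via a cardinality count plus the claim that no $c_j$ is divisible by $q$.

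The only difference lies in how you prove $q\nmid c_j$. The paper assumes $c_j=qk$, writes $jN=q^rk+z$ with $0\le z<q^{r-1}$, substitutes $q^r=(q-1)N+1$ to get $jN=(q-1)Nk+k+z$, deduces $N\mid k+z$, and bounds $0<k+z<N$ for a contradiction. You instead work modulo $q^r$: from the same identity $(q-1)N=q^r-1$ you obtain $(q-1)(jN\bmod q^r)\equiv -j\pmod{q^r}$, and then argue that if $jN\bmod q^r<q^{r-1}$ the value $(q-1)(jN\bmod q^r)$ is too small to equal $q^r-j$. Both arguments pivot on $(q-1)N=q^r-1$ and are of the same length and difficulty; yours is perhaps slightly cleaner in isolating the equivalence $q\mid c_j\iff jN\bmod q^r<q^{r-1}$ before the contradiction, while the paper's divisibility-by-$N$ formulation is marginally more direct.
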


	\begin{proof}
		Take $u = \frac{q^r-1}{q-1}$. Given
		$$ \left\lfloor\frac{(r-1)u}{q^{r-1}}\right\rfloor= u-2 $$
		it is enough to prove that $\left\lfloor\frac{jm}{r}\right\rfloor$ cannot be divisible by $q$, since
		$$ u-2- \# \{ i \in \{ 1, \ldots, u-2 \} |\ q|i \} = u-2-\frac{u-1}{q}+1=\frac{u-1}{q}(q-1)=q^{r-1}-1.$$
		and then $A$ should be $\sum_{j=1}^{q^{r-1}-1} \left\lfloor\frac{ju}{q^{r-1}}\right\rfloor P_j$, implying that $A$ is non-special of degree $g$. By Theorem \ref{DivDeggKe}, any other divisor with these characteristics should be of this form.\\
		Thus, we will prove $q \nmid \left\lfloor\frac{ju}{q^{r-1}}\right\rfloor$ for any $1\leq j \leq q^{r-1}-1$. If $j$ is such that $\left\lfloor\frac{ju}{q^{r-1}}\right\rfloor=qk$ for some $1 \leq k \leq \frac{u-1}{q}-1$, then
		$$ ju=q^rk+z=u(q-1)k+k+z. $$
		This expression implies that $u|k+z$, but
		$$ k+z < \frac{u-2}{q}+q^{r-1}=\frac{uq-1}{q}<u. $$
		Then no such $k$ exists, and the conclusion follows. 
	\end{proof}

	\begin{corollary} \label{DivNsgHe}
		On the Hermitian curve $y^q + y - x^{q+1}=0$ over $\FF_{q^2}$, any effective non-special divisor of degree $g$ with support contained in $\{ P_{0b_i}: 1 \leq i \leq q \}$ is of the form
		
		$$ A = \sum_{i=1}^{q-1} i P_{0b_i}.$$
	\end{corollary}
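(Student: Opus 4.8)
The plan is to exhibit the Hermitian curve as a particular Kummer extension and then read off the answer from the final assertion of Theorem~\ref{DivDeggKe}. First I would rewrite the defining equation: since the additive polynomial $T^q+T$ is separable with $q$ distinct roots $\beta_1,\dots,\beta_q$ (each satisfying $\beta^q=-\beta$, hence $\beta^{q^2}=\beta$, so that all of them, and therefore the points $P_{0b_i}$, are $\F_{q^2}$-rational), one has $y^q+y=\prod_{i=1}^q(y-\beta_i)$. Thus $y^q+y-x^{q+1}=0$ is exactly the Kummer extension
$$ x^{\,q+1}=\prod_{i=1}^{q}(y-\beta_i), $$
i.e. the situation of Theorem~\ref{DivDeggKe} with $m=q+1$ and $r=q$.

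Next I would check that the hypotheses of Theorem~\ref{DivDeggKe} are met. We have $(r,m)=(q,q+1)=1$, and moreover $r=q<m=q+1$, so we are precisely in the ``in particular'' case of that theorem. The genus is $g=\tfrac{(m-1)(r-1)}{2}=\tfrac{q(q-1)}{2}$, which agrees with the known genus of the Hermitian curve; and the admissible support set $\{P_{0b}:\prod_{i=1}^{r}(b-\beta_i)=0\}$ is exactly $\{P_{0b_i}:1\le i\le q\}$, since $\prod_{i=1}^q(b-\beta_i)=b^q+b$. Hence the effective non-special divisors of degree $g$ supported on $\{P_{0b_i}\}$ are classified by the theorem.

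It then remains to specialise the explicit formula. In the case $r<m$, Theorem~\ref{DivDeggKe} gives
$$ A=\sum_{j=1}^{r-1}\left\lfloor\frac{jm}{r}\right\rfloor P_{0b_j}=\sum_{j=1}^{q-1}\left\lfloor\frac{j(q+1)}{q}\right\rfloor P_{0b_j}. $$
The only computation to carry out is $\left\lfloor\frac{j(q+1)}{q}\right\rfloor=j+\left\lfloor\frac{j}{q}\right\rfloor$, and for $1\le j\le q-1$ one has $0<\tfrac{j}{q}<1$, so $\left\lfloor\frac{j}{q}\right\rfloor=0$ and the coefficient is simply $j$. This yields $A=\sum_{i=1}^{q-1} i\,P_{0b_i}$, as claimed.

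There is no genuine obstacle here: the whole difficulty is already contained in Theorem~\ref{DivDeggKe}, and what remains is the bookkeeping of identifying $(m,r)=(q+1,q)$ together with the one-line floor computation. The only point deserving a moment's care is the rationality of the base points $P_{0b_i}$ over $\F_{q^2}$, which I would dispatch via $\beta^q=-\beta\Rightarrow\beta^{q^2}=\beta$ as noted above, ensuring that the $P_{0b_i}$ are genuine degree-one places to which the theorem applies.
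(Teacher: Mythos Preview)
Your proof is correct and follows precisely the route the paper intends: the corollary is stated without proof because it is the direct specialisation of Theorem~\ref{DivDeggKe} (the ``$r<m$'' clause) to $(m,r)=(q+1,q)$, together with the one-line computation $\left\lfloor j(q+1)/q\right\rfloor=j$ for $1\le j\le q-1$. Your extra remark on the $\F_{q^2}$-rationality of the roots of $T^q+T$ is a welcome sanity check that the hypothesis $\alpha_i\in\F_{q^2}$ of Theorem~\ref{DivDeggKe} is satisfied.
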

	
	Using \ref{gtog-1} and \ref{DivDeggKe}, we have the following explicit construction.
	
	\begin{theorem} \label{DivDegg-1Ke}
		Let $\FF/\F_q(y)$ by the Kummer extension defined by
		
		$$x^m = \prod_{i=1}^r (y-\alpha_i) $$
		
		where $\alpha_i \in \F_q$ and $ (r,m)=1$. Then 
		
		$$A = \sum_{j=1}^{m-1 - \left\lfloor\frac{m}{r}\right\rfloor} j \sum_{i=1}^{s_j} P_{0b_{j_i}} - P.$$
		
		is a non-special divisor of degree $g-1$ for all $P \in \{P_{ab} | a \neq 0\ or\ b \neq b_{j_i} \} \cup \{P_\infty \}$. In particular there exist non-special divisors of degree $g-1$ supported on $ supp((x)_0) \cup \{ P_{ab} \}$. for any $a \neq 0$.
	\end{theorem}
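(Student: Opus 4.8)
The plan is to obtain the degree $g-1$ divisor by subtracting a single rational place from the explicit degree $g$ divisor furnished by Theorem \ref{DivDeggKe}, exactly as the sentence preceding the statement suggests. First I would set
$$A_0 = \sum_{j=1}^{m-1 - \left\lfloor\frac{m}{r}\right\rfloor} j \sum_{i=1}^{s_j} P_{0b_{j_i}},$$
and invoke Theorem \ref{DivDeggKe} to know that $A_0$ is an effective non-special divisor of degree $g$ whose support is contained in $\{P_{0b} : \prod_{i=1}^r(b-\alpha_i)=0\}$. The point I would extract is that $\supp(A_0)$ consists only of the places $P_{0b_{j_i}}$ lying above $x=0$.

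Next I would describe the rational places lying off this support. A degree one place $P$ avoids $\supp(A_0)$ precisely when it is not one of the $P_{0b_{j_i}}$; this happens when $P=P_{ab}$ with $a\neq 0$, or $P=P_{0b}$ with $b$ different from every $b_{j_i}$, or $P=P_\infty$ (the unique place at infinity, which is rational and totally ramified because $(r,m)=1$). This reproduces the list $\{P_{ab}\mid a\neq 0\ \text{or}\ b\neq b_{j_i}\}\cup\{P_\infty\}$ appearing in the statement.

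With these two facts in hand, the conclusion is immediate from Lemma \ref{gtog-1}: $A_0$ is effective, non-special, of degree $g\geq 1$, and any $P$ from the above list is a degree one place outside $\supp(A_0)$, so $A=A_0-P$ is a non-special divisor of degree $g-1$. For the final \emph{in particular} claim I would simply take $P=P_{ab}$ with $a\neq 0$, observe that $\supp(A)\subseteq \supp(A_0)\cup\{P_{ab}\}\subseteq \supp((x)_0)\cup\{P_{ab}\}$, and read off the asserted existence.

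The only place needing care---and the main obstacle, such as it is---is confirming that the candidate $P$ is genuinely a \emph{rational} place, as Lemma \ref{gtog-1} demands. For $P_\infty$ this is free from $(r,m)=1$; for a point $P_{ab}$ with $a\neq 0$ one should check that the curve carries $\F_q$-points with nonzero $x$-coordinate, i.e.\ that $a^m=\prod_i(b-\alpha_i)$ is solvable with $a\in\F_q^\ast$. Beyond this bookkeeping the proof is a direct assembly of Theorem \ref{DivDeggKe} and Lemma \ref{gtog-1}, with no further computation required.
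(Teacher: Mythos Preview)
Your proof is correct and follows the same strategy as the paper: set $A_0$ to be the degree-$g$ effective non-special divisor from Theorem~\ref{DivDeggKe} and apply Lemma~\ref{gtog-1} to $A=A_0-P$ for any degree-one place $P\notin\supp(A_0)$.

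The only substantive difference is in how the \emph{in particular} clause is handled. You take $P=P_{ab}$ with $a\neq 0$ and (rightly) flag that one must then verify the curve has $\F_q$-rational points with nonzero $x$-coordinate. The paper avoids this issue entirely: it computes $\#\supp(A_0)=r-\lfloor r/m\rfloor-1\leq r-1<r=\#\supp((x)_0)$ and chooses $P$ from $\supp((x)_0)\setminus\supp(A_0)$, which is automatically nonempty and consists of degree-one places because each $\alpha_i\in\F_q$. This yields a non-special divisor of degree $g-1$ supported already in $\supp((x)_0)$, hence a fortiori in $\supp((x)_0)\cup\{P_{ab}\}$ for any $a\neq 0$, and the rationality concern you raised never arises.
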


	\begin{proof}
		Note that $A+P_{ab}$ is non-special of degree $g$ by Theorem \ref{DivDeggKe} and, by Lemma \ref{gtog-1}, we have $A$ is non-special too. Given
		$$ \#Supp(A)= r- \left\lfloor\frac{r}{m}\right\rfloor -1 \leq r-1,$$
		we can take $P \in Supp(x) \setminus Supp(A) = \varnothing.$
	\end{proof}
	
	To complete this section, \cite[Lemma 4.1]{camarqu} provides the explicit form of the non-special divisors of degree $g$ and $g-1$ in the case of a curve $\FF/\F_{q^2}$ defined by 
	
	\begin{equation}
	y^{q+1} = x^2+x   \label{KEs}		
	\end{equation}
	
	\begin{lemma} \label{FEKEs}
		Let $q$ be odd, and $\FF/\F_{q^2}$ be the function field defined by \eqref{KEs}. Let $P \in \{P_{ab} \in \bP(\FF/\F_{q^2}) \mid 2a + 1 \neq 0\}$ be a rational place of $\FF$. Then, $gP$ is a non-special divisor of degree $g$. In particular, $gP - P'$ is a non-special divisor of degree $g-1$ for all rational places $P' \in \bP(\FF/\F_{q^2})$ distinct from $P$.
	\end{lemma}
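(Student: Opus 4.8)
The plan is to uncover a hyperelliptic structure hidden in \eqref{KEs}. Since $q$ is odd I can complete the square: putting $w = x + \tfrac12$, the equation $y^{q+1} = x^2 + x$ becomes $w^2 = y^{q+1} + \tfrac14$. The polynomial $f(y) = y^{q+1} + \tfrac14$ is separable --- its derivative $(q+1)y^q$ is nonzero because $\car(\F_{q^2}) \nmid q+1$, and $f(0) = \tfrac14 \neq 0$ --- and has degree $q+1 = 2g+2$, so $\FF/\F_{q^2}$ is the function field of a double cover $\pi\colon \CC \to \P^1_y$ of genus $g = \tfrac{q-1}{2}$, with hyperelliptic involution $\iota\colon w \mapsto -w$, that is $(x,y)\mapsto(-1-x,y)$. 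The only poles of $w$ lie above $y=\infty$, at the two places $\infty_\pm$, where $v_{\infty_\pm}(w) = -(g+1)$ and $v_{\infty_\pm}(y) = -1$.

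In these coordinates $w(P) = a + \tfrac12 = \tfrac{2a+1}{2}$, so the hypothesis $2a+1 \neq 0$ says precisely that $P$ is \emph{not} a ramification place of $\pi$. Hence $\bar P := \iota(P) \neq P$, with $y(\bar P) = y(P) =: y_0$ (finite) and $w$ regular and nonzero at both $P$ and $\bar P$. Because $\deg(gP) = g$, the divisor $gP$ is non-special if and only if $\dim(gP) = 1$, i.e. if and only if $1,2,\dots,g$ are all Weierstrass gaps at $P$. So the whole point is to show that no function has pole divisor $nP$ with $1 \le n \le g$.

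This is the step I expect to be the crux, and I would handle it with the involution. Suppose $f$ has pole divisor $nP$, $1 \le n \le g$, and write $f = r(y) + s(y)\,w$ with $r,s \in \F_{q^2}(y)$. Then $s \neq 0$, for otherwise $f \in \F_{q^2}(y)$ would have an $\iota$-invariant pole divisor, incompatible with its only pole being $P \neq \bar P$. Applying $\iota$ gives $f^\iota = r(y) - s(y)w$ with pole divisor $n\bar P$, so $f - f^\iota = 2\,s(y)\,w$ has poles exactly at $P$ and $\bar P$, each of order $n$, and nowhere else. Now I read off the two halves of this constraint. Since $w$ is regular and nonzero at every finite non-ramification place, the finite poles of $s(y)w$ are exactly those of $s(y)$; as $y_0$ is not a ramification value, $P$ is unramified over it and $v_P\bigl(s(y)w\bigr) = v_{y_0}(s)$, forcing $s$ to have a single finite pole, at $y_0$, of order $n$. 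On the other hand regularity of $s(y)w$ at $\infty_\pm$ forces $v_{y=\infty}(s) \ge g+1$. But then the divisor of $s$ on $\P^1_y$ has at least $g+1$ zeros and only $n$ poles, so its degree is at least $(g+1)-n > 0$, contradicting that a principal divisor on $\P^1_y$ has degree $0$. Therefore $\dim(gP)=1$ and $gP$ is an effective non-special divisor of degree $g$. Note that this argument is characteristic-free and, read with $g=1$, also settles the elliptic case $q=3$.

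The degree $g-1$ assertion then follows with no extra work: $gP$ is effective and non-special of degree $g$ with $\supp(gP) = \{P\}$, so for every rational place $P' \neq P$ we have $P' \notin \supp(gP)$, and Lemma \ref{gtog-1} yields that $gP - P'$ is a non-special divisor of degree $g-1$.
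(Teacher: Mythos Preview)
Your proof is correct and follows the same strategy as the paper: identify the hyperelliptic involution $(x,y)\mapsto(-1-x,y)$, observe that the condition $2a+1\neq 0$ means $P$ is not a fixed point, deduce that $1,\dots,g$ are all gaps at $P$ so that $\dim(gP)=1$, and finish with Lemma~\ref{gtog-1}. The only difference is that where the paper invokes the classical fact (citing \cite{sh}) that a non-fixed rational place on a hyperelliptic curve has Weierstrass semigroup $\{0,g+1,g+2,\dots\}$, you supply a direct self-contained argument via the decomposition $f=r(y)+s(y)w$ and a degree count on $\P^1_y$; this buys independence from the external reference at the cost of a few extra lines, but the underlying idea is the same.
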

	
	\begin{proof}
		The hyperelliptic involution of $y^{q+1} = x^2 + x$ is given by $\varphi(x,y) = (-x-1, y)$, so the fixed points of $\varphi$ are those $(a, b) \in \mathbb{F}_{q^2} \times \mathbb{F}_{q^2}$ such that $2a + 1 = 0$. Let $P \in \{P_{ab} \in \bP(\FF/\F_{q^2}) \mid 2a + 1 \neq 0\}$. We nkow that if $P$ is a rational place of an hyperelliptic function field which is not fixed by the hyperelliptic involution, then $H(P)= \{ 0,g+1,g+2,\ldots \}$ (see \cite[Satz 8]{sh}). Therefore $\mathcal{L}(gP) = \{0\}$ and $gP$ is a non-special divisor of degree $g$. Finally, given a rational place $P' \in \bP(\FF/\F_{q^2}) \setminus \{P\}$, we have that $gP - P'$ is also a non-special divisor by \cite[Lemma 3]{balb}.
	\end{proof}

	\subsection{Example on an asymptotically good tower}
	
		A tower $F_1 \subseteq F_2 \subseteq F_3 \subseteq \ldots $ of algebraic function fields over a finite field $\F_q$ is said to be asymptotically good if
	$$ \lim_{m \to \infty} \frac{B_1(F_m/\F_q)}{g(F_m / \F_q)} > 0. $$
	
	We will consider the following tower $\mathcal{F} = (F_m)_{m \geq 1} $ of function fields $\F_m / K$ when $K = \F_{q^2}$ : 
	$$ F_m = K(x_1, \ldots, x_m)\ \mbox{with}\ x_{i+1}^q + x_{i+1} = \frac{x_i^q}{x_i^{q-1}+1}\ \mbox{for}\ i= 1, \ldots ,m-1. $$
	
	This tower was studied in \cite{gast}:
	
	\begin{proposition}
		\begin{enumerate}
			\item[\rm{i})] For all $m \geq 2$, the extension $F_m/F_{m-1}$ is Galois extension of degree $q$.
			
			\item[\rm{ii})] The pole of $x_1$ in $F_1$ is totally ramified in $F_m/T_1$, $\mathit{i.e.}$,
			$$ (x_1)_{\infty}^{F_m} = q^{m-1}.P_\infty^{(m)} $$
			with a place $P_\infty^{(m)} \in \bP_1(F_m)$ of degree one.
			
			\item[\rm{iii})] The genus $g(F_m)$ is 
			$$ g(F_m) = \left\{\begin{array}{ll}
				(q^{m/2}-1)^2 & \mbox{if}\ m \equiv 0\ \mbox{mod}\ 2 \\
				(q^{(m+1)/2}-1)(q^{(m-1)/2}-1) & \mbox{if}\ m \equiv 1\ \mbox{mod}\ 2 \\
			\end{array}
			\right.  $$
		\end{enumerate}
	\end{proposition}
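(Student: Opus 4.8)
The three statements call for quite different tools: (i) and (ii) reduce to the theory of Artin--Schreier-type extensions plus one valuation computation propagated by induction, whereas (iii) is a genus computation via the Hurwitz formula and is where the real difficulty sits. Write $p$ for the characteristic of $\F_q$, $K=\F_{q^2}$, and, for $\wp(T)=T^q+T$, put $u_i=x_i^q/(x_i^{q-1}+1)\in F_i$, so that $F_{i+1}=F_i(x_{i+1})$ is defined by $\wp(x_{i+1})=u_i$. The basic observation is that $\wp$ is an $\F_q$-linear separable additive polynomial: $\wp(aT)=a^qT^q+aT=a\wp(T)$ for $a\in\F_q$, and $\wp'(T)=1$. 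Its kernel is $\{\gamma\in K:\operatorname{Tr}_{K/\F_q}(\gamma)=0\}$, a set of exactly $q$ elements, all lying in the constant field $K\subseteq F_{m-1}$.

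For (i) and (ii), I would prove by induction on $m$ that the pole of $x_1$ extends to a unique place that is totally ramified and rational at every level. At the pole $P$ of $x_1$ in $F_1$ one has $v_P(x_1)=-1$, hence $v_P(u_1)=-q-(-(q-1))=-1$. Since $\gcd(1,p)=1$, the generalized Artin--Schreier extension $\wp(x_2)=u_1$ has degree $q$ and $P$ is totally ramified in it; because all $q$ roots $x_2+\gamma$ ($\gamma\in\ker\wp$) lie in $F_2$ and $\wp$ is separable, $F_2/F_1$ is Galois with group $\cong(\ker\wp,+)$, elementary abelian of order $q$. The unique place $P'$ above $P$ satisfies $v_{P'}(u_1)=-q$, whence $q\,v_{P'}(x_2)=-q$ and $v_{P'}(x_2)=-1$; moreover $P'$ is rational, since a totally ramified place keeps residue degree $f=1$. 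Thus $P'$ has exactly the shape of $P$ one level up, the induction closes, and we obtain $(x_1)_\infty^{F_m}=q^{m-1}P_\infty^{(m)}$ with $P_\infty^{(m)}\in\bP_1(F_m)$. This is (i) and (ii) at once.

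For (iii) the engine is the Hurwitz formula applied step by step,
\begin{equation*}
2g(F_m)-2=q\bigl(2g(F_{m-1})-2\bigr)+\deg\operatorname{Diff}(F_m/F_{m-1}),
\end{equation*}
so that everything reduces to computing the different of each Artin--Schreier step. A place $P$ of $F_{m-1}$ ramifies in $\wp(x_m)=u_{m-1}$ exactly when, after subtracting some $\wp(z)$ with $z\in F_{m-1}$, the function $u_{m-1}$ still has a pole at $P$ of order $b$ prime to $p$; such a place is totally ramified with different exponent $(q-1)(b+1)$. The poles of $u_{m-1}$ occur only at the poles of $x_{m-1}$ and at the places where $x_{m-1}^{q-1}=-1$, i.e. where $x_{m-1}$ takes one of the $q-1$ values $\beta\in K$ with $\beta^{q-1}=-1$. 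One checks, exactly as in the induction, that every pole of $x_{m-1}$ has order $1$, so each contributes different exponent $2(q-1)$; the plan is then to show that the whole ramification locus of $F_m/F_1$ lies over the finitely many places $x_1=\infty$ and $x_1=\beta$ of $F_1$, and to compute the total different over each of them.

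The main obstacle is the behaviour over the places $x_1=\beta$. There the analysis is genuinely delicate, because the order to which $x_{m-1}^{q-1}+1$ vanishes changes with the level: at the bottom level $x_1-\beta$ is a simple zero and the place ramifies, but one level up the corresponding zero of $x_2-\beta$ already has multiplicity divisible by $p$ (for $q=2$, for instance, a place with $x_2=\beta$ forces $x_1=0$ and $v(x_2-\beta)=2$, so $v(u_2)=-2$ is divisible by $p$ and the place is unramified). Controlling these multiplicities, hence deciding precisely which $\beta$-places ramify and with which exponent at each level, is the crux, and it is what produces the even/odd dichotomy in the final answer. Once the contribution of each step is pinned down, the resulting recursion with $g(F_1)=0$ integrates to $g(F_m)=(q^{m/2}-1)^2$ for $m$ even and $(q^{(m+1)/2}-1)(q^{(m-1)/2}-1)$ for $m$ odd. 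I would carry out this local ramification bookkeeping following the analysis of Garcia and Stichtenoth in \cite{gast}, which is exactly the part that must be done with care.
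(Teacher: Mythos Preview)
Your proposal is correct and aligns with the paper: the paper's own proof is simply a citation of \cite{gast} (Lemma~3.3 for (i)--(ii) and Remark~3.8 for (iii)), and your sketch follows exactly that Artin--Schreier/Hurwitz route before deferring the delicate $\beta$-place bookkeeping to the same reference. If anything, you give more detail than the paper does; just be careful that in your $q=2$ aside, pole order divisible by $p$ does not by itself force unramifiedness---one must still check that $u_2$ can be regularized by subtracting some $\wp(z)$---but since you explicitly hand that analysis off to \cite{gast}, this is not a gap.
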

	
	\begin{proof}
		(i), (ii) See \cite[Lemma 3.3]{gast}. (iii) See \cite[Remark 3.8]{gast}
	\end{proof}
	
	\begin{definition}
		For $m \geq 1$, let
		$$ c_m = \left\{\begin{array}{ll}
			q^m-q^{m/2} & \mbox{if}\ m \equiv 0\ \mbox{mod}\ 2 \\
			q^m-q^{(m+1)/2} & \mbox{if}\ m \equiv 1\ \mbox{mod}\ 2 \\
		\end{array}
		\right.  $$
	\end{definition}
	
	\begin{definition}
		For $1 \leq j \leq m$ we define
		$$ \pi_j = \prod_{i=1}^{j} (x_i^{q-1}+1) $$
		and
		$$ \mathtt{L}_j^{(m)} = \{ P \in \bP(F_m)\ |\ P\ \mbox{is a zero of}\ x_i^{q-1}+1,\ \mbox{for some}\ i \in \{ 1, \ldots, j \} \}. $$
	\end{definition}
	
	\begin{lemma} \label{PrincPi}
		\begin{enumerate}
			\item[\rm{i})] Let $1 \leq j \leq m$. Then the principle divisor of $\pi_j$ is given by 
			$$ (\pi_j)^{F_m} = C_j^{(m)}-(q^m-q^{m-j})P_\infty^{(m)}, $$
			where $C_j^{(m)} \geq 0$ is a divisor of $F_m$ with
			$$ supp(C_j^{(m)}) = \mathtt{L}_j^{(m)}. $$
			\item[\rm{ii})] Let $1 \leq j \leq m-1$ and $0 \leq e \leq q-1$. Then the principal divisor of $\pi_jx_{j+1}^e$ in $F_m$ is given by 
			$$ (\pi_jx_{j+1}^e)^{F_m} = D_{j,e}^{(m)}- (q^m-q^{m-j}+eq^{m-j-1})P_\infty^{(m)},$$
			where $D_{j,e}^{(m)} \geq 0$ is a divisor of $F_m$ with
			$$ \mathtt{L}_j^{(m)} \subseteq supp(D_{j,e}^{(m)}).$$
		\end{enumerate}
	\end{lemma}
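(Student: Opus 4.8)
The plan is to obtain the divisor of $\pi_j$ (and of $\pi_j x_{j+1}^e$) by computing $v_P$ at every place $P$ of $F_m$, the whole point being to show that the only pole lies at $P_\infty^{(m)}$. Granting this, $C_j^{(m)}:=(\pi_j)+(q^m-q^{m-j})P_\infty^{(m)}$ is automatically effective, its support is the set of zeros of $\pi_j$, and its degree is forced by $\deg(\pi_j)=0$. I would first fix the local data at infinity: writing the defining relation in the factored form $x_{i+1}(x_{i+1}^{q-1}+1)=x_i^q/(x_i^{q-1}+1)$ and starting from the totally ramified pole of $x_1$ recalled above, $v_{P_\infty^{(m)}}(x_1)=-q^{m-1}$, an induction on $i$ gives $v_{P_\infty^{(m)}}(x_i)=-q^{m-i}$ for $1\le i\le m$. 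Hence $v_{P_\infty^{(m)}}(x_i^{q-1}+1)=-(q-1)q^{m-i}$, and summing the geometric series,
\[ v_{P_\infty^{(m)}}(\pi_j)=-(q-1)\sum_{i=1}^{j}q^{m-i}=-(q^m-q^{m-j}), \]
which is exactly the pole order in (i); for (ii) one adds $e\,v_{P_\infty^{(m)}}(x_{j+1})=-e\,q^{m-j-1}$.

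Next I would treat the finite places. Multiplying the factored relation by $\pi_i$ gives the telescoping identity $x_{i+1}\pi_{i+1}=x_i^q\pi_{i-1}$ (with $\pi_0=1$), so that $\pi_j$ is, up to the factor $\pi_1=x_1^{q-1}+1$, a monomial in the $x_i$; this reduces the question to the divisors of the $x_i$. Fix a place $P\neq P_\infty^{(m)}$ and let $i_0$ be the least index with $v_P(x_{i_0}^{q-1}+1)>0$ (if no such index $\le m$ exists, every factor of $\pi_j$ is a unit at $P$ and $v_P(\pi_j)=0$). Then $v_P(x_i)=0$ for $i\le i_0$, while each step $x_{i+1}^q+x_{i+1}=x_i^q/(x_i^{q-1}+1)$ above $i_0$ has a right-hand side with a pole of order prime to $p$ and is therefore totally ramified; this propagates as $v_P(x_i)=-q^{m-i}$ for $i>i_0$ and gives $v_P(x_{i_0}^{q-1}+1)=q^{m-i_0}$. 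Feeding this into $\pi_j=\prod_{i=1}^j(x_i^{q-1}+1)$, for $i_0\le j$ one finds
\[ v_P(\pi_j)=q^{m-i_0}-(q-1)\sum_{i=i_0+1}^{j} q^{m-i}=q^{m-j}, \]
while $v_P(\pi_j)=0$ when $i_0>j$.

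In particular $v_P(\pi_j)\ge 0$ at every finite place, so $P_\infty^{(m)}$ is the only pole and $C_j^{(m)}$ is effective; moreover $v_P(\pi_j)>0$ precisely when $i_0\le j$, i.e. precisely when $P$ is a zero of some $x_i^{q-1}+1$ with $i\le j$, which is the definition of $\mathtt{L}_j^{(m)}$. This proves $\supp(C_j^{(m)})=\mathtt{L}_j^{(m)}$, and $\deg C_j^{(m)}=q^m-q^{m-j}$ follows from $\deg(\pi_j)=0$. For (ii), the same local values give, at any $P\in\mathtt{L}_j^{(m)}$ (so $i_0\le j<j+1$, whence $v_P(x_{j+1})=-q^{m-j-1}$),
\[ v_P(\pi_j x_{j+1}^e)=q^{m-j}-e\,q^{m-j-1}=q^{m-j-1}(q-e)>0 \]
since $0\le e\le q-1$; as $P_\infty^{(m)}$ remains the only pole, $D_{j,e}^{(m)}$ is effective and $\mathtt{L}_j^{(m)}\subseteq\supp(D_{j,e}^{(m)})$, the inclusion being in general strict because $x_{j+1}$ contributes further zeros.

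The \emph{main obstacle} is the finite-place input invoked in the second step: that above each place where $x_{i_0}^{q-1}+1$ vanishes the successive Artin--Schreier steps all present a right-hand side with a pole of order prime to $p$, hence are totally ramified, yielding $v_P(x_i)=-q^{m-i}$ for $i>i_0$. This is precisely the ramification analysis of the Garcia--Stichtenoth tower, and I would either cite it from \cite{gast} or reprove it by tracking pole orders along the chain. It is worth noting that the support statements in (i) and (ii) are robust: they hold verbatim even if the zero of $x_{i_0}^{q-1}+1$ is not simple, since only the total ramification along the chain, and not the exact multiplicity, enters the sign of $v_P(\pi_j)$ and of $v_P(\pi_j x_{j+1}^e)$.
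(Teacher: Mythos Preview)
Your sketch is correct and is, in substance, the argument one finds in the reference the paper cites: the paper itself gives no proof here, only ``See \cite[Lemma 3.4]{pestto}'', and your computation of $v_{P_\infty^{(m)}}(\pi_j)$ via the geometric series, together with the finite-place analysis showing $v_P(\pi_j)=q^{m-j}$ for $P\in\mathtt{L}_j^{(m)}$ and $0$ otherwise, is exactly that argument. The one point to tighten is your assertion ``$v_P(x_i)=0$ for $i\le i_0$'': what you actually need (and what holds) is $v_P(x_i^{q-1}+1)=0$ for $i<i_0$, and the reason $v_P(x_i)\ge 0$ there (ruling out a pole contribution) is again part of the Garcia--Stichtenoth ramification picture you already flag as the main external input from \cite{gast}.
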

	
	\begin{proof}
		See \cite[Lemma 3.4]{pestto}
	\end{proof}
	
	\begin{definition}
		For $1 \leq j \leq m$, let
		$$ A_j^{(m)} = \sum_{P \in \mathtt{L}_j^{(m)}} P. $$
	\end{definition}
	
	\begin{remark}
		From Lemma \ref{PrincPi} we have: $\pi_j \in \mathcal{L}((q^m-q^{m-j})P_\infty^{(m)}-A_j^{(m)})$.
	\end{remark}
	
	\begin{proposition} \label{OneDim}
		For $1 \leq j \leq m$,
		$$ \mathcal{L}((q^m-q^{m-j})P_\infty^{(m)}-A_j^{(m)}) = <\pi_j>; $$
		$\mathit{i.e.}$, the space $ \mathcal{L}((q^m-q^{m-j})P_\infty^{(m)}-A_j^{(m)})$ is one-dimensional.
	\end{proposition}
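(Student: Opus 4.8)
The plan is to trivialize the divisor by dividing out $\pi_j$, thereby replacing
$D:=(q^m-q^{m-j})P_\infty^{(m)}-A_j^{(m)}$ by an effective divisor and reducing the statement to the absence of non-constant functions in a suitable space. By Lemma \ref{PrincPi}(i) the principal divisor of $\pi_j$ is $(\pi_j)^{F_m}=C_j^{(m)}-(q^m-q^{m-j})P_\infty^{(m)}$, so that $D+(\pi_j)^{F_m}=C_j^{(m)}-A_j^{(m)}=:E$. Since $\supp(C_j^{(m)})=\mathtt{L}_j^{(m)}$ and every place occurs in $A_j^{(m)}$ with coefficient $1$, we have $C_j^{(m)}\ge A_j^{(m)}$, i.e. $E\ge 0$. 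Multiplication by $\pi_j$ then gives a $K$-linear isomorphism $\mathcal{L}(E)\xrightarrow{\ \sim\ }\mathcal{L}(D)$, $h\mapsto h\pi_j$, because $-E+(\pi_j)^{F_m}=-D$; in particular $\dim\mathcal{L}(D)=\dim\mathcal{L}(E)$, and the image of the constants recovers $\pi_j\in\mathcal{L}(D)$ (as in the preceding Remark), whence $\dim\mathcal{L}(D)\ge 1$. It remains to prove the reverse inequality $\dim\mathcal{L}(E)\le 1$, that is, $\mathcal{L}(E)=K$.

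First I would record that $E=\sum_{P\in\mathtt{L}_j^{(m)}}(v_P(\pi_j)-1)P$ is a divisor of degree $g(F_m)$, so that proving $\mathcal{L}(E)=K$ is exactly the assertion that $E$ is an effective non-special divisor of degree $g$ (cf. the Lemma characterising degree-$g$ divisors). The degree is obtained by a ramification count: the places of $\mathtt{L}_j^{(m)}$ lie above the zeros of $\pi_j$ in the subfield $F_j$ (note $\pi_j\in F_j$), the factors $x_i^{q-1}+1$ have pairwise disjoint zero sets because a zero of $x_i^{q-1}+1$ is a pole of $x_{i+1}$, and $v_P(\pi_j)$ is governed by the ramification index of $P$ in $F_m/F_j$. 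Combining this with the genus formula for $F_m$ and with $\deg C_j^{(m)}=q^m-q^{m-j}$ from Lemma \ref{PrincPi} yields $\deg E=g(F_m)$; the Proposition is thereby reduced to showing that no non-constant function of $F_m$ has all its poles bounded by $E$.

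For this last step I would work with the explicit pole structure at $P_\infty^{(m)}$. By Lemma \ref{PrincPi}(ii) the functions $\pi_\ell x_{\ell+1}^e$ with $0\le e\le q-1$ have pole order $q^m-q^{m-\ell}+eq^{m-\ell-1}$ at $P_\infty^{(m)}$ and vanish along $\mathtt{L}_\ell^{(m)}$; products of such monomials span the one-point spaces $\mathcal{L}(sP_\infty^{(m)})$, and the Weierstrass semigroup of $P_\infty^{(m)}$ is generated by their pole orders. A non-constant $f\in\mathcal{L}(E)$, transported to $\mathcal{L}(D)$ as $f\pi_j$, would be such a function of pole order $\le q^m-q^{m-j}$ at $P_\infty^{(m)}$ that in addition vanishes on all of $\mathtt{L}_j^{(m)}$, and the aim is to show that these vanishing conditions leave only the scalar multiples of $\pi_j$. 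The hard part is precisely this independence: one must control the valuations $v_P(\pi_j)$ at the wildly ramified places $P\in\mathtt{L}_j^{(m)}$ (they exceed $1$, so $E\neq 0$ in general) and argue that the pole numbers available at those places are too large to produce any extra function. This is the genuinely tower-specific input, for which the ramification analysis of $F_m/F_j$ in \cite{pestto} and \cite{gast} must be invoked; everything preceding it is formal.
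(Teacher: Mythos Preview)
The paper does not give its own proof of this proposition; it simply cites \cite[Proposition 3.6]{pestto}. So your proposal should be read as an attempt to sketch what that cited argument might look like.

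Your formal reduction is sound: with $D=(q^m-q^{m-j})P_\infty^{(m)}-A_j^{(m)}$ and $E=D+(\pi_j)=C_j^{(m)}-A_j^{(m)}$, one has $E\ge 0$ and $\mathcal{L}(D)=\pi_j\cdot\mathcal{L}(E)$, so the statement is equivalent to $\mathcal{L}(E)=K$. That part is correct and is indeed how one would begin.

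However, there is a genuine error in the next paragraph. You assert that $\deg E=g(F_m)$ for all $1\le j\le m$, and then rephrase the goal as ``$E$ is an effective non-special divisor of degree $g$''. This is false in general: since $\deg E=\deg D=(q^m-q^{m-j})-\deg A_j^{(m)}$, and by Lemma~\ref{DegAm} one has $\deg A_j^{(m)}=q^j-1$ for $j\le m/2$, the degree is $q^m-q^{m-j}-q^j+1$, which equals $g(F_m)$ only for the single value $j=m/2$ (resp.\ $j=(m-1)/2$) when $m$ is even (resp.\ odd). For smaller $j$ the divisor $E$ has degree strictly less than $g(F_m)$ and is therefore special whenever effective; the desired conclusion $\mathcal{L}(E)=K$ is still meaningful, but it is \emph{not} the statement that $E$ is non-special. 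Your framing would need to be corrected accordingly.

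Finally, the proposal does not actually carry out the ``hard part'': you end by saying that the ramification analysis of \cite{pestto} and \cite{gast} ``must be invoked''. Since the statement itself is \cite[Proposition 3.6]{pestto}, this is circular and leaves the substantive content unproved. The actual argument in \cite{pestto} proceeds by induction on $m$ (for fixed $j$), using the Galois structure of $F_m/F_{m-1}$ to descend a hypothetical extra function; your outline via the Weierstrass semigroup at $P_\infty^{(m)}$ is a plausible alternative route, but as written it is only a heuristic and not a proof.
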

	
	\begin{proof}
		See \cite[Proposition 3.6]{pestto}
	\end{proof}
	
	\begin{lemma} \label{DegAm} \cite[Lemma 3.7]{pestto}
		Let $1 \leq j \leq m/2$. Then
		$$ deg(A_j^{(m)}) = q^j-1 $$
	\end{lemma}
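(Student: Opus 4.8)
The plan is to compute $\deg(A_j^{(m)})$ by decomposing $\mathtt{L}_j^{(m)}$ according to which factor of $\pi_j=\prod_{i=1}^{j}(x_i^{q-1}+1)$ vanishes, and then to analyse how the resulting places split in the tower. The decisive elementary observation is that if $x_i^{q-1}+1$ vanishes at a place $P$, then $x_i(P)=\beta$ with $\beta^{q-1}=-1$, hence $\beta^q+\beta=0$; substituting into $x_i^q+x_i=x_{i-1}^q/(x_{i-1}^{q-1}+1)$ forces $x_{i-1}(P)=0$, and by descending induction $x_1(P)=\cdots=x_{i-1}(P)=0$. In particular the zero sets of the distinct factors $x_i^{q-1}+1$ are pairwise disjoint, so that $\deg(A_j^{(m)})=\sum_{i=1}^{j}R_i$, where $R_i$ is the sum of the degrees of the (distinct) places of $F_m$ at which $x_i^{q-1}+1$ vanishes. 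It then suffices to prove $R_i=(q-1)q^{i-1}$ for $1\le i\le j$, since $\sum_{i=1}^{j}(q-1)q^{i-1}=q^j-1$.

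Next I would locate these zeros already at level $i$. By the previous paragraph they all lie above the ``all-zero'' place $x_1=\cdots=x_{i-1}=0$, which is a single rational place $P_0$ of $F_{i-1}$ (at each step exactly one child continues the all-zero chain). Because the right-hand side of $x_i^q+x_i=x_{i-1}^q/(x_{i-1}^{q-1}+1)$ vanishes to high order at $P_0$, the extension $F_i/F_{i-1}$ is unramified there and splits completely into the $q$ rational places prescribed by the roots of $T^q+T$ over $\F_{q^2}$; exactly $q-1$ of these satisfy $x_i^{q-1}=-1$. Thus $x_i^{q-1}+1$ has precisely $q-1$ zeros in $F_i$, all rational, and tracking the valuations $v_{P_0}(x_{i-1})=q^{i-2}$ along the chain one computes that $x_i^{q-1}+1$ vanishes at each of them to order $q^{i-1}$.

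The heart of the argument, and the step I expect to be the main obstacle, is to lift these $q-1$ zeros to $F_m$ and show the total degree is multiplied by exactly $q^{i-1}$. At such a zero the function $x_i^q/(x_i^{q-1}+1)$ acquires a pole of order $q^{i-1}$, which is divisible by the characteristic $p$ as soon as $i\ge 2$, so the right tool is Artin--Schreier reduction rather than a direct ramification count. I would show by an explicit change of generator---using that $\beta^{q+1}\in\F_{q^2}$, so the required $q$-th roots already lie in the residue field---that a pole of order $q^{s}\ge q$ can be completely cleared, yielding an \emph{unramified} step in which the reduced degree is multiplied by $q$ and a new generator whose pole order drops from $q^{s}$ to $q^{s-1}$. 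Iterating produces exactly $i-1$ consecutive unramified steps $F_i\subset F_{i+1}\subset\cdots\subset F_{2i-1}$; after them the generator has pole order $1$, every further step is totally ramified, and the reduced degree is frozen.

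Consequently $R_i=(q-1)q^{i-1}$, provided all $i-1$ unramified steps occur below level $m$, i.e. $2i-1\le m$; since $i\le j\le m/2$ gives $2i-1\le m-1$, this holds for every $i\le j$, which is exactly why the statement is confined to that range (for $i>(m+1)/2$ the reduced degree is capped by $[F_m:F_i]$ and the formula fails). Summing over $i$ then gives $\deg(A_j^{(m)})=q^j-1$, in harmony with the polar data of Lemma \ref{PrincPi}. The delicate points still to be pinned down are the exact polar expansions showing that each reduction lowers the pole order by a single factor of $q$ and terminates at pole order $1$, and the remark that the split-versus-inert alternative in the unramified steps is irrelevant here because only the relation $\sum_{P\mid Q}f(P\mid Q)=q$ enters the degree count.
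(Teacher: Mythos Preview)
Your decomposition $\deg(A_j^{(m)})=\sum_{i=1}^{j}R_i$ with target $R_i=(q-1)q^{i-1}$ is exactly the paper's strategy; the only difference is that the paper obtains $R_i$ in one line by citing \cite[Lemma~3.6]{gast}, whereas you set out to reprove that lemma from scratch via the explicit ramification analysis above the zeros of $x_i^{q-1}+1$. Your sketch of that analysis (the $i-1$ unramified steps produced by Artin--Schreier reduction---the key point being that the leading polar coefficient lies in $\F_q$, so the reduction clears the pole completely---followed by totally ramified steps once the pole order drops to $1$) is correct and is essentially how Garcia and Stichtenoth argue, so you are not taking a different route but rather unpacking the black box the paper invokes. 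One place where your write-up is more careful than the paper's: you justify the pairwise disjointness of the zero sets of the $x_i^{q-1}+1$, which the paper uses (in writing $A_j^{(m)}=\sum_{i}\sum_{P\in\mathcal{A}_i^{(m)}}P$ with no overcounting) without comment.
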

	
	\begin{proof}
		Let $\mathcal{A}_i^{(m)} = \{ P \in \bP(F_m)\ |\ P\ \mbox{is a zero of}\ x_i^{q-1}+1  \}$. It follows from \cite[Lemma 3.6]{gast} that for $1 \leq i \leq m/2$,
		$$ deg\left(\sum_{P \in \mathcal{A}_i^{(m)}}P\right)  = (q-1)q^{i-1}. $$ 
		Since 
		$$ A_j^{(m)} = \sum_{i=1}^{j} \sum_{P \in \mathcal{A}_i^{(m)}} P, $$
		we obtain 
		$$ deg(A_j^{(m)}) = \sum_{i=1}^{j} (q-1)q^{i-1} = q^j-1. $$
	\end{proof}
	
	\begin{definition}
		We define a divisor $A^{(m)}$ of $F_m$ as follows: $A^{(1)}=0$ and, for $m \geq 2$,
		$$ A^{(m)} = A_j^{(m)}\ \ \ \mbox{with}\ j = \left\{\begin{array}{ll}
			\frac{m}{2} & \mbox{if}\ m \equiv 0\ \mbox{mod}\ 2 \\
			\frac{m-1}{2} & \mbox{if}\ m \equiv 1\ \mbox{mod}\ 2 \\
		\end{array}
		\right.  $$
	\end{definition}
	
	\begin{lemma} \label{ExiAGT} \cite[Lemma 3.9]{pestto}
		\begin{enumerate}
			\item[\rm{i})] $deg(A^{(m)}) = c_m - g(F_m) $.
			\item[\rm{ii})] $dim(c_m P_\infty^{(m)}- A^{(m)})=1$.
		\end{enumerate}
		$\mathit{i.e.}$, $c_m P_\infty^{(m)}- A^{(m)}$ is non-special of degree $g(F_m)$.
	\end{lemma}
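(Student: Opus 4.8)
The plan is to establish the two assertions separately, each reducing immediately to a result already proved in this section once the exponents are matched against the parity of $m$, and then to read off non-speciality from the degree-$g$ criterion. Throughout I would split into the cases $m$ even and $m$ odd, writing $j=m/2$ respectively $j=(m-1)/2$ for the index selected in the definition of $A^{(m)}$; note that in both cases $1\le j\le m/2$, so Lemma \ref{DegAm} applies and gives $\deg(A^{(m)})=q^{j}-1$.

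For assertion (i) I would simply compute $c_m-g(F_m)$ in each parity and compare with $q^{j}-1$. When $m$ is even, $(q^{m/2}-1)^2=q^m-2q^{m/2}+1$, so $c_m-g(F_m)=(q^m-q^{m/2})-(q^m-2q^{m/2}+1)=q^{m/2}-1=\deg(A^{(m)})$. When $m$ is odd, expanding $(q^{(m+1)/2}-1)(q^{(m-1)/2}-1)=q^m-q^{(m+1)/2}-q^{(m-1)/2}+1$ yields $c_m-g(F_m)=q^{(m-1)/2}-1=\deg(A^{(m)})$. This is nothing more than bookkeeping, and it settles (i).

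For assertion (ii) the key observation is that the chosen $j$ makes $c_m$ coincide with $q^m-q^{m-j}$: indeed $m-j=m/2$ when $m$ is even and $m-j=(m+1)/2$ when $m$ is odd, so in both cases $q^m-q^{m-j}=c_m$. Consequently $c_mP_\infty^{(m)}-A^{(m)}$ is exactly the divisor $(q^m-q^{m-j})P_\infty^{(m)}-A_j^{(m)}$ appearing in Proposition \ref{OneDim}, whose Riemann--Roch space is the one-dimensional space $\langle \pi_j\rangle$. Hence $\dim(c_mP_\infty^{(m)}-A^{(m)})=1$.

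It remains to deduce the final claim. Since $P_\infty^{(m)}$ has degree one, $\deg(c_mP_\infty^{(m)}-A^{(m)})=c_m-\deg(A^{(m)})=c_m-(c_m-g(F_m))=g(F_m)$ by (i). A divisor of degree $g$ is non-special precisely when its dimension equals $1$, so combining this degree computation with (ii) shows that $c_mP_\infty^{(m)}-A^{(m)}$ is non-special of degree $g(F_m)$. The case $m=1$ is trivial, since then $F_1$ is rational and the divisor is $0$. The only delicate point in the whole argument is keeping the parity-dependent exponents straight; no genuinely new idea is needed, as Proposition \ref{OneDim} and Lemma \ref{DegAm} carry all the weight.
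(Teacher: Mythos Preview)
Your proof is correct and follows essentially the same route as the paper: compute $c_m-g(F_m)$ in each parity and match it with $\deg(A^{(m)})=q^{j}-1$ via Lemma~\ref{DegAm}, then identify $c_mP_\infty^{(m)}-A^{(m)}$ with $(q^m-q^{m-j})P_\infty^{(m)}-A_j^{(m)}$ and invoke Proposition~\ref{OneDim}. The only cosmetic point is that your claim $1\le j\le m/2$ tacitly assumes $m\ge 2$, which you do handle separately at the end.
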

	
	\begin{proof}
		For $m=1$, all assertions are obvious since $c_1 = g(F_1) = 0$ and $A^{(1)}=0$. Let $m \geq 2$. 
		\begin{itemize}
			\item If $m \equiv 0\ \mbox{mod}\ 2$. Then
			$$ c_m = q^m-q^{m/2}\ \ \ \mbox{and}\ \ \ g_m = (q^{m/2}-1)^2.$$
			Hence $$ c_m - g_m = q^{m/2}-1= deg(A^{(m)}), $$
			By Lemma \ref{DegAm}. On the other hand, we have
			$$ \mathcal{L}(c_m P_\infty^{(m)}-A^{(m)}) = \mathcal{L}((q^m-q^{m/2})P_\infty^{(m)}-A_{m/2}^{(m)}) = <\pi_{m/2}>, $$
			By Proposition \ref{OneDim}.
			\item If $m \equiv 1\ \mbox{mod}\ 2$. The proof is similar.
		\end{itemize}
	\end{proof}
	
	\subsection{Example of construction with a sufficiently large number of points}
	
	This section presents the tools proposed by H. Randriam in \cite{ra} which allow for the construction of non-special divisors of degree $g-1$.
	
		\begin{lemma} \label{Plu1}
		Let $\CC$ be a curve of genus $g$ over $\F_q$, and $\mathcal{S} \subset \CC(\F_q)$. 
		\begin{enumerate}
			\item Let $A$ be an $\F_q$-rational divisor on $\CC$ such that 
			$$ i(A)=dim(A)-(deg(A)+1-g) \geq 1. $$
			Suppose that for all $P \in \mathcal{S}$ we have $dim(A+P) > dim(A)$. Then
			\begin{equation}
				\#\mathcal{S} \leq g-dim(A).
			\end{equation}
			(if $deg(A)=-1$, then, we have also $\#\mathcal{S} \leq 1$).
			\item Let $B$ a $\F_q$-rational divisor on $\CC$ such that $dim(B) \geq 1$. Suppose that for all $P \in \mathcal{S}$ we have $dim(B-P) > dim(B)-1$. Then
			\begin{equation}
				\#\mathcal{S} \leq deg(B)+1-dim(B).
			\end{equation}
			(if $deg(B)=2g-1$, we have also $\#\mathcal{S}\leq 1$).
		\end{enumerate}
	\end{lemma}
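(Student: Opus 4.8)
The plan is to recognize that the two hypotheses are \emph{base-point conditions} and that the two assertions are Riemann--Roch duals of one another, so that it suffices to prove (2) and then transport it to (1) via the canonical divisor $\kappa$. I would start from the elementary fact that for a single place $P$ of degree one one always has $\dim(D) - 1 \le \dim(D-P) \le \dim(D)$; hence the hypothesis in (2), $\dim(B-P) > \dim(B) - 1$, is equivalent to $\dim(B-P) = \dim(B)$, which is exactly the statement that every effective divisor linearly equivalent to $B$ passes through $P$, i.e.\ that $P$ is a base point of the linear system $|B|$. Likewise, writing (\ref{RR}) for $A$ and for $A+P$ gives $\dim(A+P) - \dim(A) = 1 + \dim(\kappa - A - P) - \dim(\kappa - A)$, so the hypothesis in (1), $\dim(A+P) > \dim(A)$, is equivalent to $\dim(\kappa - A - P) = \dim(\kappa - A)$; that is, setting $B := \kappa - A$, the hypothesis of (1) says precisely that each $P \in \mathcal{S}$ is a base point of $|B|$, while $i(A) \ge 1$ says exactly $\dim(B) \ge 1$. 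Thus (1) is (2) applied to $B = \kappa - A$.

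For (2) itself, the key step is to upgrade ``each $P \in \mathcal{S}$ is a base point'' to a single simultaneous statement. Since $\mathcal{S} \subset \CC(\F_q)$ is finite, set $D_{\mathcal{S}} := \sum_{P \in \mathcal{S}} P$. If $f \in \mathcal{L}(B)$ is nonzero, then $(f) + B$ is effective and linearly equivalent to $B$; as every $P \in \mathcal{S}$ is a base point, $(f)+B \ge P$ for each such $P$, and because the $P$ are distinct this forces $(f) + B \ge D_{\mathcal{S}}$, i.e.\ $f \in \mathcal{L}(B - D_{\mathcal{S}})$. Hence $\mathcal{L}(B) = \mathcal{L}(B - D_{\mathcal{S}})$ and in particular $\dim(B - D_{\mathcal{S}}) = \dim(B) \ge 1$. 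I would then invoke the elementary bound $\dim(E) \le \deg(E) + 1$, valid whenever $\dim(E) \ge 1$, applied to $E = B - D_{\mathcal{S}}$: this yields $\dim(B) \le \deg(B) - \#\mathcal{S} + 1$, which rearranges to $\#\mathcal{S} \le \deg(B) + 1 - \dim(B)$, as claimed. Transporting through $B = \kappa - A$ and using (\ref{RR}) gives $\deg(B) + 1 - \dim(B) = (2g-2-\deg(A)) + 1 - i(A) = g - \dim(A)$, which is the bound in (1).

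It remains to treat the two parenthetical refinements. If $\deg(B) = 2g-1$ in (2) (equivalently $\deg(A) = -1$ in (1), since $\deg(\kappa - A) = 2g-2-\deg(A)$), then $\deg(B) > 2g-2$ forces $B$ non-special with $\dim(B) = g$, and a degree-one place $P$ is a base point of $|B|$ only if $\dim(B-P) = g$, i.e.\ (again by (\ref{RR}), as $\deg(\kappa - B + P) = 0$) only if $\kappa - B + P \sim 0$, that is $P \sim B - \kappa$. Two distinct places of degree one are never linearly equivalent when $g \ge 1$ (their difference would be a principal divisor with a single simple pole, forcing $g = 0$), so at most one $P$ can satisfy $P \sim B - \kappa$; hence $\#\mathcal{S} \le 1$.

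The only genuine obstacle is the simultaneous-base-point step in (2): one must use that the $P \in \mathcal{S}$ are \emph{distinct} places of degree one, so that ``$(f)+B \ge P$ for every $P$'' upgrades to ``$(f) + B \ge D_{\mathcal{S}}$'' with no multiplicity issue, and that $\mathcal{S}$ is finite so $D_{\mathcal{S}}$ is a well-defined divisor. Everything else is bookkeeping with (\ref{RR}); I would double-check the arithmetic $\deg(B)+1-\dim(B) = g - \dim(A)$ and the equivalence of the two dimension-jump hypotheses, since a sign slip there is the most likely source of error.
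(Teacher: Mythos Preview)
Your argument is correct. The paper itself does not give a proof of this lemma: it simply writes ``See \cite[Lemma 1]{ra}'' and defers entirely to Randriam's original paper, so there is no in-paper argument to compare against at the level of individual steps.

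Your route---proving (2) directly via the base-locus interpretation and the elementary bound $\dim(E)\le\deg(E)+1$ for $\dim(E)\ge 1$, then deducing (1) from (2) by the substitution $B=\kappa-A$---is the natural one and is essentially how this lemma is usually proved. The duality bookkeeping is right: $\dim(A+P)-\dim(A)=1+\dim(B-P)-\dim(B)$, so the two dimension-jump hypotheses match, and $\deg(B)+1-\dim(B)=(2g-2-\deg A)+1-i(A)=g-\dim(A)$ as you computed. The simultaneous-base-point step is fine because the $P\in\mathcal{S}$ are distinct degree-one places, so the conditions $v_P((f)+B)\ge 1$ combine without multiplicity collisions to give $(f)+B\ge D_{\mathcal{S}}$. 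The parenthetical refinements are also handled correctly: when $\deg(B)=2g-1$ one has $\dim(B)=g$, and $\dim(B-P)=g$ forces $\dim(\kappa-B+P)=1$ with $\deg(\kappa-B+P)=0$, i.e.\ $P\sim B-\kappa$; uniqueness then follows from the standard fact that distinct degree-one places on a curve of genus $g\ge 1$ are never linearly equivalent (and the hypotheses are vacuous when $g=0$).
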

	
	\begin{proof}
		See \cite[Lemma 1]{ra}.
	\end{proof}
	
	The main result of this section is based on the following Lemma. \\
	For all $q>1$ and all integer $n \geq 2$ we define:
	\begin{align*}
		G_q(n) &= \sum_{k=1}^{n-2} \frac{(q^{n-k}-1)(q^{n-k-1}-1)}{(q^n-1)(q^{n-1}-1)} \\
		&=\frac{1}{q^2-1}-\frac{1- \frac{(q-1)n}{q^n}-1}{(q-1)(q^{n-1}-1)}
	\end{align*}
	
	\begin{lemma} \label{Plu2}
		Let $\CC$ be a curve of genus $g$ over $\F_q$, and $\mathcal{S} \subset \CC(\F_q)$. 
		\begin{enumerate}
			\item Let $A$ be an $\F_q$-rational divisor on $\CC$ such that $deg(A) \geq -2$ and
			$$ i(A)=dim(A)-(deg(A)+1-g) \geq 2. $$
			Suppose that for all $P \in \mathcal{S}$ we have $dim(A+2P) > dim(A)$. Then
			\begin{equation}
				\#\mathcal{S} \leq 3g+3+deg(A)-3dim(A).
			\end{equation}
			and
			\begin{equation}
				\#\mathcal{S} \leq \left( 1+ \frac{q^{i(A)-2}-1}{q^{i(A)}-1} \right)^{-1} (6g-6-2deg(A)-2G_q(i(A)) \cdot \#\CC(\F_q).
			\end{equation}
			more generally, for all integers $w$ such that $2 \leq w \leq i(A)$,
			\begin{multline}
				\#\mathcal{S} \leq (i(A)-w)+\left( 1+ \frac{q^{w-2}-1}{q^{w}-1} \right)^{-1} (6g-6-2deg(A)-4(i(A)-w)\\ -2G_q(w) \cdot \#\CC(\F_q).
			\end{multline}
			\item Let $B$ an $\F_q$-rational divisor on $\CC$ such as $deg(B) \leq 2g$ and $dim(B) \geq 2$. Suppose that for all $P \in \mathcal{S}$ we have $dim(B-2P) > dim(B)-2$. Then
			\begin{equation}
				\#\mathcal{S} \leq 2deg(B)+2g+4-3dim(B).
			\end{equation}
			and
			\begin{equation}
				\#\mathcal{S} \leq \left( 1+ \frac{q^{dim(B)-2}-1}{q^{dim(B)}-1} \right)^{-1} (2deg(B)+2g-2-2G_q(dim(B)) \cdot \#\CC(\F_q).
			\end{equation}
			more generally, for all integers $w$ such that $2 \leq w \leq dim(B)$,
			\begin{multline}
				\#\mathcal{S} \leq (l(B)-w)+\left( 1+ \frac{q^{w-2}-1}{q^{w}-1} \right)^{-1} (2deg(B)+2g-2-4(dim(B)-w)\\ -2G_q(w) \cdot \#\CC(\F_q).
			\end{multline}
		\end{enumerate}
	\end{lemma}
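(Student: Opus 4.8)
The plan is to collapse the two assertions into one by Serre duality, then to split $\mathcal{S}$ into a first-order (base-point) part, handled by Lemma~\ref{Plu1}, and a genuinely second-order part, which carries the real content. \emph{Reduction of (1) to (2).} Using $i(A)=\dim(\kappa-A)$ together with \eqref{RR}, set $B=\kappa-A$. Then $\deg B=2g-2-\deg A$ and $\dim B=i(A)$, so the hypotheses $\deg A\geq -2$, $i(A)\geq 2$ become exactly $\deg B\leq 2g$, $\dim B\geq 2$. Moreover
\[
\dim(A+2P)-\dim(A)=2+\dim(\kappa-A-2P)-\dim(\kappa-A)=2+\dim(B-2P)-\dim(B),
\]
so the condition $\dim(A+2P)>\dim(A)$ is equivalent to $\dim(B-2P)>\dim(B)-2$. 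A substitution shows that every bound in (1) matches the corresponding one in (2): for instance $3g+3+\deg A-3\dim A=6g-2\deg A-3i(A)=2\deg B+2g+4-3\dim B$, while the $q$-power factors and $G_q(i(A))=G_q(\dim B)$ coincide since $i(A)=\dim B$. Hence it suffices to prove (2).

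\emph{Splitting off the base points.} Write $\ell=\dim B$ and $d=\deg B$, and set $\mathcal{S}_0=\{P\in\mathcal{S}:\dim(B-P)=\ell\}$ and $\mathcal{S}_1=\mathcal{S}\setminus\mathcal{S}_0$. For $P\in\mathcal{S}_0$ the point is a base point of $|B|$, so Lemma~\ref{Plu1}(2) applies directly and yields $\#\mathcal{S}_0\leq d+1-\ell$. For $P\in\mathcal{S}_1$ one has $\dim(B-P)=\ell-1$, and the hypothesis then forces $\dim(B-2P)=\dim(B-P)$: every section of $\mathcal{L}(B)$ vanishing once at $P$ vanishes twice, i.e.\ the morphism $\phi\colon\CC\to\mathbb{P}^{\ell-1}$ attached to $|B|$ fails to be an immersion at $P$. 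The difficulty here, and the reason Lemma~\ref{Plu1} does not suffice, is that the divisor $\kappa-B+P$ witnessing the degeneracy depends on $P$ (one checks that neither $\dim(B-P)>\dim(B)-1$ nor $\dim((\kappa-B)+P)>\dim(\kappa-B)$ holds on $\mathcal{S}_1$), so no single fixed divisor sees these points: a genuinely second-order device is needed.

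\emph{The second-order locus and the crude bound.} I would control $\mathcal{S}_1$ through the bundle of first principal parts, i.e.\ the jet map $\mathcal{L}(B)\otimes\mathcal{O}_\CC\to\mathcal{P}^1(B)$, whose target has rank $2$ and determinant of degree $2d+2g-2$; the points of $\mathcal{S}_1$ lie on the locus where this map has rank $\leq 1$. Combined with $\#\mathcal{S}_0\leq d+1-\ell$ and the bookkeeping of the $\ell-2$ sections invisible to the $2$-jet (the origin of the $-3\ell$ in the estimate), this gives the crude bound $\#\mathcal{S}\leq 2d+2g+4-3\ell$. Because the result must hold in small characteristic, I would phrase the whole count in terms of dimensions of Riemann--Roch spaces, using $\deg(D)\geq\dim(D)-1$ whenever $\dim(D)\geq 1$, exactly as in the proof of Lemma~\ref{Plu1}, rather than invoking classical Wronskian or Plücker formulas, which can degenerate in characteristic $p$.

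\emph{The refined $\F_q$-rational counting.} The bounds involving $G_q$ and $\#\CC(\F_q)$ arise from counting the degenerate points over $\F_q$ rather than over $\overline{\F_q}$. The plan is a double count of incidences between the points $P\in\mathcal{S}_1$ and the admissible tangent directions of $\phi$ at the $\F_q$-rational points of $\CC$: for a fixed $\F_q$-point of $\mathbb{P}^{\ell-1}$ the proportion of tangent lines forced into a prescribed linear subspace is precisely $\frac{q^{\ell-k}-1}{q^{\ell}-1}\cdot\frac{q^{\ell-k-1}-1}{q^{\ell-1}-1}$, the summand of $G_q(\ell)$; summing over the $\#\CC(\F_q)$ rational points and normalizing by $\bigl(1+\frac{q^{\ell-2}-1}{q^{\ell}-1}\bigr)^{-1}$, which records the codimension-two ($2$-jet) condition, produces the stated inequality. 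Restricting the count to a $w$-dimensional quotient of $\mathcal{L}(B)$ (projecting to $\mathbb{P}^{w-1}$) gives the interpolated family, the additive term $\dim B-w$ accounting for the discarded directions; the three bounds of (1) then follow from those of (2) by the duality of the first step. The hard part throughout is $\mathcal{S}_1$: capturing a second-order tangential degeneracy by a characteristic-free count, for which the combinatorics of points and lines in $\mathbb{P}^{\ell-1}(\F_q)$ encoded by $G_q$ is exactly the right tool.
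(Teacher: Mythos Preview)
The paper does not supply a proof of this lemma: it simply writes ``See \cite[Lemma 2]{ra}'' and defers entirely to Randriam's original argument. There is therefore no in-paper proof to compare against, only the cited source.

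Evaluated on its own terms, your sketch is structurally sound where it is precise and vague where the actual work lies. The Serre-duality reduction of (1) to (2) is correct and your verification of the numerical match is clean. The split $\mathcal{S}=\mathcal{S}_0\cup\mathcal{S}_1$ into base points and genuine second-order degeneracies is the natural opening move, and the bound $\#\mathcal{S}_0\leq d+1-\ell$ from Lemma~\ref{Plu1}(2) is immediate. Where the proposal stops being a proof is precisely at the two hard steps: (i) the crude bound on $\mathcal{S}_1$ via the principal-parts sheaf is asserted rather than carried out --- you name the determinant degree $2d+2g-2$ but never explain how the rank-drop locus, which a priori is a degeneracy scheme and not just a set of reduced points, yields the inequality with the exact constant $2d+2g+4-3\ell$, nor how the ``$\ell-2$ invisible sections'' enter concretely; and (ii) the $G_q$ bounds are described only by analogy (``proportion of tangent lines forced into a prescribed linear subspace''), without an actual incidence count or a specification of what is being summed over what. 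These are not minor omissions: they are the entire content of Randriam's lemma beyond Lemma~\ref{Plu1}, and the sketch does not demonstrate that you could close them. As a plan it is pointed in the right direction; as a proof it is incomplete at exactly the places where the result is nontrivial.
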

	
	\begin{proof}
		See \cite[Lemma 2]{ra}.
	\end{proof}
	
	For the following results, we introduce two functions $f_{1,\CC}$ and $f_{2,\CC}$  defined over $\Z$ by:
	
	$$ f_{1,\CC}(a) = \left\{\begin{array}{lll}
		1 & \mbox{if}\ a=-1 \\
		g & \mbox{if}\ 0\leq a \leq g-2 \\
		0 & \mbox{otherwise}
	\end{array}
	\right.  $$
	and 
	$$
	f_{2,\CC}(a) = \left\{\begin{array}{llll}
		g & \mbox{if}\ a=g-2 \\
		\underset{2\leq w \leq g-1-a}{min} \lfloor (g-1-a-w) + \left(1+\frac{q^{w-2}-1}{q^w-1} \right)^{-1} & \mbox{if}\ -2\leq a \leq g-3 \\
		\ \ \ \ \ \ \ \ \  (2g-2 +2a+4w-2G_q(w) \#\CC(\F_q))\rfloor & \\
		0 & \mbox{otherwise}
	\end{array}   
	\right. 
	$$
	
	\begin{definition}
		Let $\CC(\F_q)$ be a curve of genus $g \geq 1$. A divisor $D$ on $\CC$ is called \emph{ordinary} if
		$$ dim(D)=max(0,deg(D)+1-g). $$
		otherwise $D$ is called \emph{exceptional}.
	\end{definition}
	
	\begin{lemma} \label{RanL5}
		Let $A$ be a divisor on $\CC$, $\mathcal{S} \subset \CC(\F_q) $ a set of points and $s \in \{ 1, 2\}$. Assume that $A$ is ordinary, and $A+sP$ is exceptional for all $ P \in \mathcal{S}$. Then
		\begin{equation}
			\#S \leq f_{s,\CC}(deg(A)).
		\end{equation}
	\end{lemma}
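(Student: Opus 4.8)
The plan is to derive the lemma from the single-point estimate Lemma~\ref{Plu1} (for $s=1$) and the two-point estimate Lemma~\ref{Plu2} (for $s=2$), after translating the hypotheses ``$A$ ordinary, $A+sP$ exceptional'' into the language of the index of speciality. Writing $a=\deg(A)$ and using Riemann--Roch in the form $\dim(D)=\deg(D)+1-g+i(D)$, the key dictionary is: if $a\leq g-2$ then $A$ ordinary means $\dim(A)=0$ and hence $i(A)=g-1-a$, while if $a\geq g-1$ it means $A$ is non-special; dually, a divisor of degree $\leq g-1$ is exceptional exactly when its dimension is positive, and one of degree $\geq g-1$ exactly when it is special. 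I would first record this dictionary, then split the argument according to $s$ and the range of $a$, so that each case matches one branch of $f_{s,\CC}$.

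For $s=1$ I would dispose of the degenerate ranges first. If $a\geq g-1$, then $A$ is non-special and $A+P\geq A$ is non-special by Proposition~\ref{BR}(6); since $\deg(A+P)\geq g$ this makes $A+P$ ordinary, contradicting exceptionality, so $\mathcal{S}=\varnothing$. If $a\leq -2$, then $\deg(A+P)<0$ forces $\dim(A+P)=0$, again contradicting exceptionality, so $\mathcal{S}=\varnothing$; both agree with $f_{1,\CC}(a)=0$. In the remaining range $-1\leq a\leq g-2$ one has $\dim(A)=0$ and $i(A)=g-1-a\geq 1$, and exceptionality of $A+P$ (degree $\leq g-1$) says precisely $\dim(A+P)>0=\dim(A)$. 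Lemma~\ref{Plu1}(1) then gives $\#\mathcal{S}\leq g-\dim(A)=g$, which is $f_{1,\CC}(a)$ for $0\leq a\leq g-2$; when $a=-1$ the extra clause of Lemma~\ref{Plu1}(1) improves this to $\#\mathcal{S}\leq 1=f_{1,\CC}(-1)$.

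For $s=2$ the degenerate ranges $a\geq g-1$ and $a\leq -3$ are handled exactly as above (monotonicity, resp.\ negative degree of $A+2P$), giving $\mathcal{S}=\varnothing=f_{2,\CC}(a)$. The heart is the range $-2\leq a\leq g-3$: there $\deg(A)=a\geq -2$ and $i(A)=g-1-a\geq 2$, so the hypotheses of Lemma~\ref{Plu2}(1) are met, and exceptionality of $A+2P$ (degree $\leq g-1$) gives $\dim(A+2P)>0=\dim(A)$. Feeding $i(A)=g-1-a$ and $\deg(A)=a$ into the ``more generally'' bound of Lemma~\ref{Plu2}(1), the quantity $6g-6-2\deg(A)-4(i(A)-w)$ collapses to $2g-2+2a+4w$ and $(i(A)-w)$ becomes $g-1-a-w$; minimising over $2\leq w\leq g-1-a$ and taking the integer floor reproduces verbatim the middle branch of $f_{2,\CC}(a)$.

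The one case that is not a direct appeal to Lemma~\ref{Plu2} is the boundary $a=g-2$, and this is where the main care is needed: here $i(A)=1$, so Lemma~\ref{Plu2}(1) (which requires $i(A)\geq 2$) is unavailable. Instead I would argue that $A+2P$ exceptional of degree $g$ forces $\dim(A+2P)\geq 2$, whence the single-step inequality $\dim(A+2P)\leq \dim(A+P)+1$ yields $\dim(A+P)\geq 1>0=\dim(A)$; applying Lemma~\ref{Plu1}(1) to the single point $P$ then gives $\#\mathcal{S}\leq g-\dim(A)=g=f_{2,\CC}(g-2)$. Apart from this reduction, the only real work is the boundary bookkeeping at degrees $g-1,g-2,-1,-2$ and checking the algebraic simplification that turns the raw estimate of Lemma~\ref{Plu2} into $f_{2,\CC}$; neither is deep, but both must be done carefully to cover every branch of the definition of $f_{s,\CC}$.
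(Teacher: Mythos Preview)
Your proposal is correct and follows essentially the same case analysis as the paper's proof: split on $s\in\{1,2\}$ and on the degree ranges of $A$, dispose of the degenerate ranges where $A+sP$ is automatically ordinary, and in the genuine cases invoke Lemma~\ref{Plu1}(1) (for $s=1$ and for the boundary $a=g-2$ when $s=2$) and Lemma~\ref{Plu2}(1) (for $-2\leq a\leq g-3$ when $s=2$). If anything, you are more careful than the paper in two places: at $a=g-2$ you make explicit the bridge step $\dim(A+2P)\geq 2\Rightarrow \dim(A+P)\geq 1$ needed to feed $A$ into Lemma~\ref{Plu1}(1), which the paper leaves implicit; and you spell out the algebraic simplification $6g-6-2a-4(i(A)-w)=2g-2+2a+4w$ that identifies the ``more generally'' bound of Lemma~\ref{Plu2}(1) with the middle branch of $f_{2,\CC}$.
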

	
	\begin{proof}
		Let $a=deg(A)$. 
		\begin{enumerate}
			\item[\rm{i})] Assume that $s=1$.
			\begin{itemize}
				\item If $a \leq -2$ or $a \geq 2g-2$, then $A+P$ is ordinary, thus $\mathcal{S}$ is empty, and we have $f_{1,\CC}(a)=0$.
				\item If $g-1 \leq a \leq 2g-3$, then  ordinary $A$ mean that $dim(A)=a+1-g$, thus $A+P$ is ordinary by Riemann-Roch, and we have the same conclusion. 
				\item If $-1 \leq a \leq g-2$, then  ordinary $A$ mean that $dim(A)=0$, and exceptional $A+P$ mean that $dim(A+P)\geq 1$. We conclude by Lemma \ref{Plu1}-1.
			\end{itemize}
			
			\item[\rm{ii})] Assume that $s=2$.
			\begin{itemize}
				\item If $a \leq -3$ or $a \geq 2g-1$, then $A+2P$ is ordinary, thus $\mathcal{S}$ is empty.
				\item If $g-1 \leq a \leq 2g-2$, then ordinary $A$  mean that $dim(A)+a+1-g$, thus $A+2P$ is ordinary by Riemann-Roch, and we have the same conclusion.
				\item If $a=g-2$, then  ordinary $A$ mean that $dim(A)=0$, and exceptional $A+2P$ mean that $dim(A+2P)\geq 1$. By Lemma \ref{Plu1}-1, we have $\#S\leq g=f_{2,\CC}(a)$.
				\item If $-2 \leq a \leq g-3$, then ordinary $A$ mean that $dim(A)=0$, and exceptional $A+2P$ mean that $dim(A+2P) \geq 1$. We conclude by Lemma \ref{Plu2}-1.
			\end{itemize}
		\end{enumerate}
	\end{proof}

	\begin{proposition} \label{RanP6}
		Let $\CC(\F_q)$ be a curve of genus $g$, $r\geq1$ an integer, $s_1, \ldots, s_r \in \{ 1, 2\}$,  $\F_q$-rational divisors $T_1, \ldots, T_r $ on $\CC $, and $d \in \Z$ an integer. The degree of $T_i$ is denoted $t_i=deg(T_i)$. Let $D_0$ be a $\F_q$-rational divisor on $\CC$ of degree $deg(D_0)=d_0 \leq d$, such that $s_i D_0 -T_i$ is ordinary. Finelly, let $\mathcal{S} \subset \CC(\F_q)$ be a set of points such that
		\begin{equation} 
			\#\mathcal{S} > \underset{d_0\leq d' < d}{max} \sum_{i=1}^{r} f_{s_i,\CC}(s_i d' - t_i). \label{InP6}
		\end{equation}
		Then, there exists an $\F_q$-rational divisor $D$ on $\CC$ of degree $deg(D)=d$, such that $s_i D -T_i$ is ordinary. Furthermore, $D$ can be chosen such that $D-D_0$ is effective and supported by points in $\mathcal{S}$.
	\end{proposition}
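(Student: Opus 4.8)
The plan is to proceed by induction on the difference $d-d_0\geq 0$, building $D$ one point at a time starting from $D_0$. The base case $d=d_0$ is immediate: take $D=D_0$, so that $D-D_0=0$ is effective and $s_iD_0-T_i$ is ordinary for every $i$ by hypothesis. For the inductive step I would establish the following one-step claim: if $D'$ is an $\F_q$-rational divisor of degree $d'$ with $d_0\leq d'<d$ such that $s_iD'-T_i$ is ordinary for every $i$, then there exists a point $P\in\mathcal{S}$ for which $s_i(D'+P)-T_i$ is ordinary for every $i$. Granting this claim, one starts from $D_0$ and successively adjoins points of $\mathcal{S}$, raising the degree by one each time; after exactly $d-d_0$ steps one reaches a divisor $D$ of degree $d$ with $s_iD-T_i$ ordinary for all $i$, and by construction $D-D_0$ is an effective sum of points of $\mathcal{S}$, as required.

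The heart of the argument is the one-step claim, which I would prove by bounding, for each index $i$, the set of \emph{bad} points that would destroy ordinariness. Fix $D'$ as above, and for each $i$ set
$$ \mathcal{S}_i = \{\, P\in\mathcal{S} : s_i(D'+P)-T_i \text{ is exceptional} \,\}. $$
Writing $A_i=s_iD'-T_i$, which is ordinary by the inductive hypothesis, one has $A_i+s_iP=s_i(D'+P)-T_i$, so $\mathcal{S}_i$ is precisely the set of points $P$ for which $A_i+s_iP$ is exceptional. Lemma \ref{RanL5}, applied with the divisor $A_i$, the set $\mathcal{S}_i$, and the value $s=s_i\in\{1,2\}$, then yields
$$ \#\mathcal{S}_i \leq f_{s_i,\CC}(\deg A_i) = f_{s_i,\CC}(s_i d'-t_i), $$
using $\deg A_i = s_i d' - t_i$.

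Summing over $i$ and invoking the hypothesis on $\#\mathcal{S}$, I would conclude
$$ \#\bigcup_{i=1}^{r}\mathcal{S}_i \leq \sum_{i=1}^{r} f_{s_i,\CC}(s_i d'-t_i) \leq \max_{d_0\leq d''<d}\ \sum_{i=1}^{r} f_{s_i,\CC}(s_i d''-t_i) < \#\mathcal{S}, $$
where the middle inequality holds because $d_0\leq d'<d$. Hence $\mathcal{S}\setminus\bigcup_i\mathcal{S}_i$ is nonempty, and any point $P$ in it satisfies the one-step claim. This completes the induction and the proof.

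I expect the only delicate points to be bookkeeping rather than conceptual: one must verify that $\deg A_i = s_i d'-t_i$ so that the argument fed to $f_{s_i,\CC}$ matches the statement, and that the degrees actually visited during the induction are exactly those $d'$ in the range $d_0\leq d'<d$ over which the maximum in the hypothesis is taken. The substantive content---the quantitative bound on the number of points at which adjoining $P$ (for $s=1$) or $2P$ (for $s=2$) makes a divisor jump from ordinary to exceptional---is entirely encapsulated in Lemma \ref{RanL5}, which itself rests on Lemmas \ref{Plu1} and \ref{Plu2}. Thus the present proposition is a clean greedy/inductive packaging of those estimates, requiring no further geometric input.
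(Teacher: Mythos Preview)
Your proof is correct and follows essentially the same approach as the paper: build $D$ from $D_0$ one point at a time, at each step applying Lemma~\ref{RanL5} to bound the bad points for each $i$ and then using the hypothesis \eqref{InP6} (via a union bound) to find a good $P\in\mathcal{S}$. Your write-up is in fact more explicit than the paper's, spelling out the base case, the sets $\mathcal{S}_i$, and the union bound that the paper leaves implicit.
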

	
	\begin{proof}
		We will built $D$ incrementally. Let $d'$ such that $d_0 \leq d' < d$. Assume that a divisor $D'$ of degree $d'$ is already built, with $s_i D' -T_i$ ordinary and $D'-D_0$ effective supported in $\mathcal{S}$. Lemma \ref{RanL5} applied with $s=s_i$ and $A=s_i D'- T_i$ shows that there are no more than $f_{s_i,\CC}(s_i d'-t_i)$ points such that $s_i(D'+P)-T_i$ is exceptional. Applying \eqref{InP6}, we can find $P \in \mathcal{S}$ such that $s_i(D'+P)-T_i$ is ordinary and $(D'+P)-D_0$ effective (by construction) supported in $\mathcal{S}$. We conclude by induction on $d'$.
	\end{proof}
	
	\begin{lemma} \label{RanL8}
		With the previous notations, $f_{s,\CC}(a)$ is an increasing function when $a \leq g-1-s$, their maximum value on $\Z$ is $f_{s,\CC}(g-1-s)=s^2g$.
	\end{lemma}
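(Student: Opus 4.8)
The plan is to treat the two cases $s=1$ and $s=2$ separately, since $f_{1,\CC}$ and $f_{2,\CC}$ have very different shapes. For $s=1$ everything is immediate from the explicit piecewise definition: as $a$ increases the values run through $0,\dots,0,1,g,\dots,g$, so $f_{1,\CC}$ is non-decreasing on $a\le g-2=g-1-s$, its value at $g-1-s$ is $g=1^2g$, and since $f_{1,\CC}$ vanishes for $a\ge g-1$ this $g$ is also the global maximum on $\Z$. Thus the whole content lies in the case $s=2$, where for $-2\le a\le g-3$ one has $f_{2,\CC}(a)=\min_{2\le w\le g-1-a}\lfloor\phi(a,w)\rfloor$ with
$$\phi(a,w)=(g-1-a-w)+\Big(1+\tfrac{q^{w-2}-1}{q^{w}-1}\Big)^{-1}\big(2g-2+2a+4w-2G_q(w)\,\#\CC(\F_q)\big).$$

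First I would establish monotonicity of $f_{2,\CC}$ on $a\le g-3$. Writing $c_w=\big(1+\tfrac{q^{w-2}-1}{q^{w}-1}\big)^{-1}$, the coefficient of $a$ in $\phi(a,w)$ is $2c_w-1$. Since $0\le\tfrac{q^{w-2}-1}{q^{w}-1}<1$ for every $q>1$ and $w\ge 2$, we get $c_w\in(\tfrac12,1]$, hence $2c_w-1>0$ and $\phi(\cdot,w)$ is strictly increasing in $a$ for each fixed $w$; consequently $\lfloor\phi(\cdot,w)\rfloor$ is non-decreasing in $a$. Because the admissible index set $\{w:2\le w\le g-1-a\}$ only shrinks as $a$ grows, a one-line comparison finishes it: every $w$ admissible at $a+1$ is admissible at $a$, so $\lfloor\phi(a+1,w)\rfloor\ge\lfloor\phi(a,w)\rfloor\ge f_{2,\CC}(a)$, and minimising over such $w$ gives $f_{2,\CC}(a+1)\ge f_{2,\CC}(a)$. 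The attachment of the constant piece $0$ on $a\le -3$ is then a routine sign check at $a=-2$, so $f_{2,\CC}$ is increasing on $a\le g-1-s$.

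Next I would pin down the maximal value. The key observation is that for every $a\le g-3$ the choice $w=2$ is admissible, and there $c_2=1$ and $G_q(2)=0$ (empty sum), so $\phi(a,2)=3g+3+a$ is already an integer; hence $f_{2,\CC}(a)\le 3g+3+a<4g$ whenever $a<g-3$. At $a=g-3$ the range collapses to $w=2$ alone, giving $f_{2,\CC}(g-3)=\lfloor\phi(g-3,2)\rfloor=4g=2^2g$. It then remains only to compare with the two other pieces, $f_{2,\CC}(g-2)=g<4g$ and $f_{2,\CC}(a)=0$ for $a\ge g-1$ or $a\le -3$; combining these, the global maximum on $\Z$ equals $f_{2,\CC}(g-1-s)=4g$, which is both assertions. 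Note that this $w=2$ bound also gives the maximality cleanly and independently of any sign subtlety, since it only ever overestimates $f_{2,\CC}$.

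The only genuinely delicate point is the monotonicity step, and within it the inequality $c_w>\tfrac12$, i.e. that the positive slope in $a$ coming from the linear term $2c_w\,a$ strictly beats the slope $-1$ coming from the $(g-1-a-w)$ term. This is exactly where the arithmetic of the factor $1+\tfrac{q^{w-2}-1}{q^{w}-1}$ enters, and it is what guarantees that enlarging $a$ (while the admissible range of $w$ simultaneously contracts) can only increase the bound. Everything else reduces to substitution into the definitions of $f_{s,\CC}$ and $G_q$.
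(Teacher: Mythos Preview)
The paper does not supply a proof of this lemma; it simply refers to Randriam's original article. Your argument therefore fills in what the survey omits, and the two substantive steps are correct: the inequality $c_w=\bigl(1+\tfrac{q^{w-2}-1}{q^{w}-1}\bigr)^{-1}\in(1/2,1]$ gives the monotonicity of $a\mapsto\min_{w}\lfloor\phi(a,w)\rfloor$ on $-2\le a\le g-3$, and the specialisation $w=2$ (where $c_2=1$ and $G_q(2)=0$) gives $f_{2,\CC}(a)\le 3g+3+a$, with equality forced at $a=g-3$ because the admissible range collapses to $\{2\}$. Together with the trivial comparison with the remaining pieces $g$ and $0$, this yields the maximum $4g=s^2g$ exactly as you state.

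There is one point that is not quite a ``routine sign check'': the transition at $a=-2$, i.e.\ the inequality $f_{2,\CC}(-2)\ge 0=f_{2,\CC}(-3)$. For $w\ge 3$ the term $-2c_wG_q(w)\,\#\CC(\F_q)$ in $\phi(-2,w)$ is genuinely negative and grows with $\#\CC(\F_q)$, so positivity of $\phi(-2,w)$ is not automatic from the formula alone. What makes it work is the Hasse--Weil bound $\#\CC(\F_q)\le q+1+2g\sqrt{q}$ combined with $G_q(w)<1/(q^2-1)$; with these one checks that $\phi(-2,w)>0$ for all admissible $w$. This is easy but should be said. Note, in any case, that this boundary issue does not touch the maximum-value assertion $f_{s,\CC}(g-1-s)=s^2g$, which is the only part actually used downstream in the proof of the proposition on the existence of ordinary divisors.
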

	
	\begin{proof}
		See \cite[Lemma 8]{ra}.
	\end{proof}
	
	\begin{proposition} \label{RanP19}
		Let $\CC(\F_q)$ be a curve of genus $g$, $r\geq1$ an integer, $s_1, \ldots, s_r \in \{ 1, 2\}$,  $\F_q$-rational divisors $T_1, \ldots, T_r $ on $\CC $. Assume that 
		$$ \#\CC(\F_q) > \sum_{i=1}^{r}(s_i)^2 g.$$
		Then, for all integer $d$, there exists an $\F_q$-rational divisor $D$ on $\CC$ of degree $deg(D)=d$ and supported in $\CC(\F_q)$, such that $s_i D- T_i$ is ordinary.   
	\end{proposition}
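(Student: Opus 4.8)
The plan is to deduce Proposition \ref{RanP19} from Proposition \ref{RanP6} by choosing the parameters appropriately and invoking the uniform bound from Lemma \ref{RanL8}. The key observation is that Proposition \ref{RanP6} already produces, for any target degree $d$ and any starting divisor $D_0$ of degree $d_0 \leq d$ with each $s_i D_0 - T_i$ ordinary, a divisor $D$ of degree $d$ with all $s_i D - T_i$ ordinary and $D - D_0$ effective and supported in $\mathcal{S}$, provided the counting inequality \eqref{InP6} holds. So the entire task reduces to (a) verifying the hypotheses of Proposition \ref{RanP6} in this setting and (b) replacing the degree-dependent right-hand side of \eqref{InP6} by the single global constant $\sum_{i=1}^r (s_i)^2 g$.

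First I would handle step (b), which is where Lemma \ref{RanL8} does the work. For each index $i$, the function $a \mapsto f_{s_i,\CC}(a)$ attains its maximum over all of $\Z$ at $a = g-1-s_i$, with maximal value $f_{s_i,\CC}(g-1-s_i) = (s_i)^2 g$. Consequently, for \emph{every} integer $d'$ and every $i$ we have
\begin{equation*}
f_{s_i,\CC}(s_i d' - t_i) \leq (s_i)^2 g,
\end{equation*}
and summing over $i$ gives
\begin{equation*}
\max_{d_0 \leq d' < d} \sum_{i=1}^{r} f_{s_i,\CC}(s_i d' - t_i) \leq \sum_{i=1}^{r} (s_i)^2 g.
\end{equation*}
Therefore the hypothesis $\#\CC(\F_q) > \sum_{i=1}^r (s_i)^2 g$ immediately forces any subset $\mathcal{S}$ of cardinality close to $\#\CC(\F_q)$ to satisfy \eqref{InP6}; in particular I would take $\mathcal{S} = \CC(\F_q)$, so that $\#\mathcal{S} = \#\CC(\F_q) > \sum_{i=1}^r (s_i)^2 g \geq \max_{d_0 \leq d' < d} \sum_i f_{s_i,\CC}(s_i d' - t_i)$, which is exactly \eqref{InP6}.

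For step (a), I must exhibit a suitable base divisor $D_0$ of degree $d_0 \leq d$ such that every $s_i D_0 - T_i$ is ordinary, so that the induction in Proposition \ref{RanP6} has a valid starting point. Here I would push $d_0$ to be very negative: by the definition of ordinary and by Riemann--Roch, a divisor of sufficiently negative degree has dimension $0$ and is automatically ordinary. Concretely, choose $d_0$ small enough that $s_i d_0 - t_i < 0$ for every $i$ (e.g. $d_0 < \min_i (t_i/s_i)$ and $d_0 \leq d$), and take $D_0$ to be any $\F_q$-rational divisor of that degree — one exists by F.K. Schmidt's theorem. Then each $s_i D_0 - T_i$ has negative degree, hence dimension $0 = \max(0, \deg - g + 1)$, so it is ordinary. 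Applying Proposition \ref{RanP6} with these choices of $s_i$, $T_i$, $d$, $D_0$ and $\mathcal{S} = \CC(\F_q)$ yields an $\F_q$-rational divisor $D$ of degree $d$ with each $s_i D - T_i$ ordinary, and with $D - D_0$ effective and supported in $\CC(\F_q)$.

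The only remaining point is to confirm that $D$ itself is supported in $\CC(\F_q)$, as claimed in the statement. \textbf{This is the main obstacle to watch for:} Proposition \ref{RanP6} guarantees only that $D - D_0$ is supported in $\mathcal{S} \subseteq \CC(\F_q)$, not that $D$ is. To obtain support entirely within $\CC(\F_q)$ I would additionally choose the base divisor $D_0$ to be supported in $\CC(\F_q)$ — which is possible once $\#\CC(\F_q)$ is large, by allowing $D_0$ to be an integer combination (with possibly negative coefficients) of rational points, tuned to the required negative degree $d_0$. Since $D - D_0$ is effective and supported in $\CC(\F_q)$, and $D_0$ is supported in $\CC(\F_q)$, the divisor $D = (D - D_0) + D_0$ is then supported in $\CC(\F_q)$, completing the proof.
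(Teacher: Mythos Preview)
Your proposal is correct and follows exactly the paper's approach: apply Proposition~\ref{RanP6} with $\mathcal{S}=\CC(\F_q)$ and invoke Lemma~\ref{RanL8} to replace the right-hand side of \eqref{InP6} by the uniform bound $\sum_i (s_i)^2 g$. You are in fact more careful than the paper, which does not spell out the choice of the starting divisor $D_0$ nor the support issue you flag; your resolution (take $d_0$ very negative and $D_0$ supported on rational points) is the natural one.
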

	
	\begin{proof}
		We apply Proposition \ref{RanP6} with $\mathcal{S}=\CC(\F_q)$, and Lemma \ref{RanL8}.
	\end{proof}
	
	\begin{corollary} \label{ExDeCons1}
		Let $\CC(\F_q)$ be a curve of genus $g$, $Q$ and $G$ two $\F_q$-rational divisors. We denote by  $k=deg(Q)$ and $n=deg(G)$ their degrees, assume that
		$$ \#\CC(\F_q) > 5g $$
		and
		$$ n \geq 2k +g -1 .$$
		Then, there exists an $\F_q$-rational divisor $D$ on $\CC$ supported in $\CC(\F_q)$, such that $D-Q$ is non-special of degree $g-1$ and $dim(2D-G)=0$. \\
		In particular, if $n=2k+g-1$, then $D-Q$ and $2D-G$ are non-special of degree $g-1$.
	\end{corollary}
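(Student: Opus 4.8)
The plan is to apply Proposition \ref{RanP19} directly, with the parameters tuned so that its numerical hypothesis matches the assumption $\#\CC(\F_q) > 5g$. Concretely, I would take $r = 2$, $s_1 = 1$, $s_2 = 2$, $T_1 = Q$, $T_2 = G$, and set the target degree to $d = k + g - 1$. With these choices the required inequality in Proposition \ref{RanP19} becomes
$$ \#\CC(\F_q) > (s_1)^2 g + (s_2)^2 g = (1 + 4)\,g = 5g, $$
which is exactly our hypothesis. Proposition \ref{RanP19} then produces an $\F_q$-rational divisor $D$ of degree $d = k + g - 1$, supported in $\CC(\F_q)$, such that both $s_1 D - T_1 = D - Q$ and $s_2 D - T_2 = 2D - G$ are ordinary.

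Next I would read off the conclusions from the degrees, using the definition of an ordinary divisor, namely $\dim(E) = \max(0, \deg(E) + 1 - g)$. For $E = D - Q$ we have $\deg(D - Q) = d - k = g - 1$, so ordinariness forces $\dim(D - Q) = \max(0, (g-1)+1-g) = 0$; since a divisor of degree $g-1$ is non-special exactly when its dimension is zero, $D - Q$ is a non-special divisor of degree $g - 1$, as wanted. For $E = 2D - G$ we have $\deg(2D - G) = 2d - n = 2k + 2g - 2 - n$, and here I would invoke the hypothesis $n \geq 2k + g - 1$, which gives $\deg(2D - G) \leq g - 1$, hence $\deg(2D - G) + 1 - g \leq 0$ and therefore $\dim(2D - G) = 0$.

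Finally, for the \emph{in particular} assertion, I would specialise to $n = 2k + g - 1$: the same computation now gives $\deg(2D - G) = g - 1$ exactly, and combined with $\dim(2D - G) = 0$ this shows that $2D - G$ is also non-special of degree $g - 1$.

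I do not anticipate any real obstacle: the entire argument is the observation that the ``budget'' $1^2 g + 2^2 g = 5g$ appearing in Proposition \ref{RanP19} coincides with the hypothesis, together with the elementary degree bookkeeping above. The one point requiring a little care is the direction of the inequality for $2D - G$: one must use $n \geq 2k + g - 1$ (not the reverse) so that $\deg(2D - G) \leq g - 1$, and hence the ordinariness of $2D - G$ collapses to $\dim(2D - G) = 0$ rather than prescribing a positive dimension.
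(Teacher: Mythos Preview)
Your proof is correct and follows exactly the same approach as the paper: apply Proposition~\ref{RanP19} with $r=2$, $s_1=1$, $T_1=Q$, $s_2=2$, $T_2=G$ and $d=k+g-1$. Your write-up even spells out the degree bookkeeping (why ordinariness of $D-Q$ and $2D-G$ translates into the stated conclusions) that the paper's one-line proof leaves implicit.
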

	
	\begin{proof}
		We apply Proposition \ref{RanP19} with $r=2$, $s_1=1$, $T_1=Q$, $s_2=2$, $T_2=G$ and $d=k+g-1$.
	\end{proof}
	
	\begin{remark} \label{ExDeCons2}
	Notice that the divisor $D$ in the previous corollary can be built as seen in the proof of Proposition \ref{RanP6}. Below is the summary of the steps with the conditions of Corollary \ref{ExDeCons1}:
	\begin{enumerate}
		\item Let $Q$ and $G$ be two $\F_q$-rational divisors with $deg(Q)=k$ and $deg(G)=2k+g-1$. 
		\item Let $D_0$ be a divisor such that $deg(D_0)=d_0 \leq k+g-1$ and $D_0-Q$, $2D_0-G$ are ordinary.
		\item Build a divisor $D'$ of degree $d'$, $d_0 \leq d' < k+g-1$ such that $D'-Q$, $2D'-G$ are ordinary and $D'-D_0$ effective.
		\item We can find $P \in \CC(\F_q)$ such that $D'+P-Q$, $2(D'+P)-G$ are ordinary.
		\item We reapply step 4 until we obtain the the desired divisor $D$ with the desired degree $k+g-1$. 
	\end{enumerate}
	\end{remark}

\bibliographystyle{plain}
\bibliography{BallKoutPiel_divNS_biblio-tris}

\end{document}